\numberwithin{equation}{section}
\newtheorem{theorem}{Theorem}[section]
\newtheorem{proposition}[theorem]{Proposition}
\newtheorem{lemma}[theorem]{Lemma}
\newtheorem{corollary}[theorem]{Corollary}
\newtheorem{remark}[theorem]{Remark}
\newcommand{\rad}{{\text{\upshape rad}}}
\newcommand{\nod}{{\text{\upshape nod}}}
\renewcommand{\a}{\alpha}
\def\e{{\varepsilon}}
\def\g{{\gamma}}
\def\L{{\Lambda}}
\def\l{{\lambda}}
\def\a{{\alpha}}
\def\b{{\beta}}
\newcommand{\jcal}{{\mathcal J}}
\newcommand{\n}{N}
\newcommand{\R}{{\mathbb R}}
\newcommand{\N}{{\mathbb N}}
\def\sideremark#1{\ifvmode\leavevmode\fi\vadjust{\vbox to0pt{\vss
 \hbox to 0pt{\hskip\hsize\hskip1em
 \vbox{\hsize2.1cm\tiny\raggedright\pretolerance10000
  \noindent #1\hfill}\hss}\vbox to15pt{\vfil}\vss}}}%
\newcommand{\edz}[1]{\sideremark{#1}}
\definecolor{darkgreen}{rgb}{0.0, 0.5, 0.2}
\definecolor{purple}{rgb}{0.5, 0.0, 0.5}
\newcommand{\AL}{\color{purple}}
\newcommand{\taglia}{\color{cyan}}
\newif\ifcomment \commentfalse
\def\commentON{\commenttrue}
\long\outer\def\BC#1\EC{\ifcomment \sloppy \par \# \ldots\dotfill
	{\em #1} \dotfill \# \par \fi } \commentON
\newcommand{\remove}[1]{}
\title{Global bifurcation for the H\'enon problem}
\author[A.~L.~Amadori]{Anna Lisa Amadori$^\dag$}
\thanks{The author is member of the Gruppo Nazionale per l'Analisi Matematica, la Probabilit\`a e le loro Applicazioni (GNAMPA) of the Istituto Nazionale di Alta Matematica (INdAM). }
\date{\today}
\address{$\dag$ Dipartimento di Scienze Applicate, Universit\`a di Napoli ``Parthenope", Centro Direzionale di Napoli, Isola C4, 80143 Napoli, Italy. \texttt{annalisa.amadori@uniparthenope.it}}
\begin{document}

\begin{abstract}
We prove the existence of nonradial solutions for the H\'enon equation in the ball with any given number of nodal zones, for arbitrary values of the exponent $\a$. For sign-changing solutions the case $\a=0$ (i.e. the Lane-Emden equation) is included. The obtained solutions form global continua which branch off from the curve of radial solutions $p\mapsto u_p$, and the number of branching points increases with both the number of nodal zones and the exponent $\a$. 
	The proof technique relies on the index of fixed points in cones and provides informations on the symmetry properties of the bifurcating solutions and on the possible intersection and/or overlapping between different branches, thus allowing to separate them at least in some cases.
	
	\
	
	{\bf Keywords:} H\'enon problem, nodal solutions, bifurcation.
	
	{\bf AMS Subject Classifications:} 35J61, 35B05, 35B32.
	
\end{abstract}

\maketitle 

\section{Introduction}
The H\'enon problem, introduced in the 70's for the study of star clusters, see \cite{H}, is
\begin{equation} \label{H}
\left\{\begin{array}{ll}
-\Delta u = |x|^{\alpha}|u|^{p-1} u \qquad & \text{ in } B, \\
u= 0 & \text{ on } \partial B,
\end{array} \right.
\end{equation} 
where $B$ stands for the unitary ball in $\R^N$ with $N\ge 2$ and the exponent $\a$ is positive.
Here we have written the power-type nonlinearity in its odd formulation since we are interested in both positive and sign-changing solutions.
For $\a=0$ \eqref{H} gives back the Lane-Emden problem 
\begin{equation} \label{LE}
\left\{\begin{array}{ll}
-\Delta u = |u|^{p-1} u \qquad & \text{ in } B, \\
u= 0 & \text{ on } \partial B.
\end{array} \right.
\end{equation} 
Some of the results we present here are new also for the latter, and since our techniques allow to deal with both problems simultaneously we shall include the case $\a=0$ in the reasoning.

It is well known that, for $\a>0$ fixed, the H\'enon problem \eqref{H} admits solutions, and in particular radial solutions, for every $p\in(1,p_{\a})$, being 
\[ p_{\a}=\begin{cases} \infty & \text{ in dimension } N=2, \\ \frac{N+2+2\a}{N-2} & \text{ in dimension } N\ge 3 .\end{cases} \]
The same holds when $\a=0$, i.e. for the Lane-Emden equation \eqref{LE}, and in this case the threshold exponent for the existence of solutions coincides with the critical Sobolev exponent $p_0=\frac{N+2}{N-2}$ in dimension $N\ge 3$.
In that range of existence, for any given $m\ge 1$ there  is exactly one couple of radial solutions of \eqref{H} which have exactly $m$ nodal zones, they are classical solutions and they are one the opposite of the other (see \cite{Ni, BWi, NN}, or also \cite{AG-sing-2}). 

Such radial solutions are the only possible ones  only in the framework of positive solutions and Lane-Emden equation, where the celebrated symmetry result by Gidas, Ni and Niremberg \cite{GNN} holds. 
It is well known that the H\'enon problem in the ball has also nonradial positive solutions, and the literature on this subject is rich. First  \cite{SSW} showed that the minimal energy solution is nonradial when $\a$ is large and $p$ is subcritical. After  multi-peak solutions have been constructed by finite-dimensional reduction methods under various incidental assumptions, we mention \cite{EPW, P, PS, HCZ} among others.
Nonradial solutions have also been produced by variational methods as in \cite{S, BS, AG-N=2}, after imposing some constrains on the symmetries of the solutions, and by bifurcation methods  in  \cite{AG14, FN17}.

Coming to nodal solutions, considerations based on the Morse index yield that the minimal energy solution is nonradial for every $\a\ge 0$. Indeed the minimal energy nodal solution has Morse index 2 by \cite{BW}, while the Morse index of nodal radial solutions is greater, see \cite{AP, AG-sing-2}. 
Sign-changing multi-bubble solutions have been produced by finite-dimensional reduction methods,  we can quote  \cite{BMP, EMP, BDP} for the Lane-Emden problem and \cite{ZY} for the H\'enon problem in the disk. They  are very different from the radial ones since their nodal surfaces intersect the boundary of the ball.
Another interesting paper by Gladiali and Ianni \cite{GI} showed the existence of solutions to  the Lane-Emden equation which are nonradial but \textquotedblleft  quasi-radial\textquotedblright, in the sense that their nodal lines are the boundary of nested domains contained in the disc. Some of these quasi-radial solutions are produced as least energy nodal solutions in symmetric spaces, some others by bifurcation w.r.t.~the parameter $p$.
The approach of least energy solutions in symmetric space has been extended also to the H\'enon equation in \cite{AG-N=2, Ama}, always in dimension $N=2$. 
Concerning the H\'enon equation in dimension $N\ge 3$, in the subcritical case a very recent paper by K\"ubler and Weth \cite{KW}  produced  an infinite number of nonradial solutions   by bifurcation w.r.t.~the parameter $\a$, by a fine description of the profile of the radial solutions and of the distribution of their negative eigenvalues  as $\a\to\infty$. Such nonradial  solutions are called by the authors \textquotedblleft almost radial\textquotedblright\ because their nodal surfaces are homeomorphic to spheres. Of course, also the solutions produced by bifurcation arguments in \cite{GI} are of the same kind.

Here we aim to obtain nonradial bifurcation w.r.t. the parameter $p\in (1,p_{\a})$, for any given value of $\a>0$ (and also $\a=0$, as far as sign-changing solutions are concerned),
so we must take into account also the supercritical case.
The  Morse index of radial solutions when the parameter  $p$ approaches  the supremum of the existence range has been recently computed in four different papers (\cite{DIP-N=2, DIP-N>3} concerning the Lane-Emden problem in dimension $N=2$ and $N\ge 3$ respectively, and \cite{AG-N=2, AG-N>3} for the H\'enon problem), while  when $p$ is close to $1$ it has been characterized in terms of the zeros of suitable Bessels function in \cite{Ama}.
Starting from these computations we see that for the positive solution to the H\'enon equation  the Morse index for $p$ close to $1$ is lower than at the supremum of the existence range, and the same holds for nodal solutions  in dimension $N=2$, while in dimension $N\ge 3$ the inequality is reversed. 
Although there are still nontrivial difficulties in deducing actual bifurcation:  no variational structure
can be used to handle supercritical values of $p$ and only an odd change in the Morse index can produce a bifurcation
result. When dealing with the positive solutions, the first  eigenvalue alone plays a role and this ensures that the kernel of the linearized operator contains exactly a one-dimensional subspace of the $O(N\!-\!1)$-invariant functions, and this observation was crucial in both \cite{AG14} and \cite{FN17}.
For nodal solutions, instead, the structure of the kernel  is highly nontrivial. 
We handle this situation by  turning to the notion of degree and index of fixed points in cones introduced by Dancer in \cite{D83}. This approach  has already been applied to the Lane-Emden problem  in an annulus, see  \cite{D92}, and then extended to higher dimension and to sign-changing solutions in \cite{AG-bif}. 
It can be applied also to the H\'enon equation because the exact computations in \cite{AG-N=2, AG-N>3, Ama} rely on a characterization of the Morse index in terms of a singular Sturm-Liouville problem from \cite{AG-sing-1}, which allows to describe in full details the kernel of the linearized operator.
Furthermore this tool provides a detailed bifurcation analysis also for positive solutions, and in the subcritical case,  since it gives informations about the symmetries of the bifurcating solutions and the global properties of the branches.

\

This paper is organized as follows. In Section \ref{sec:stat} we outline the positive cones that we will use and  the main  bifurcation results that we are going to prove. Section \ref{sec:prel} deals with the Morse index: after recalling its characterization by means of the singular eigenvalues and the exact computations performed in the aforementioned papers, we check that the Morse changes across the range $p\in (1,p_{\a})$.
Next in Section \ref{sec:bif} the main results are proved, by taking advantage of the previous discussion on the Morse index and adapting that arguments to compute the index of fixed points in cones. 

\section{Statement of the main results}\label{sec:stat}

We adopt the 	spherical coordinates in $\R^{\n}$ given by $(r,\theta,\varphi)$  with $r=|x|\in[0,+\infty)$, $\theta\in[-\pi,\pi]$, $\varphi =(\varphi_1,\dots\varphi_{\n\!-\!2})\in (0,\pi)^{\n-2}$ 	so that
\[\begin{array}{ll}
x_1=r \cos\theta \prod\limits_{h=1}^{\n\!-\!2}\sin\varphi_h  , \qquad &
x_2= r \sin\theta \prod\limits_{h=1}^{\n\!-\!2}\sin\varphi_h  , \\
x_{k}= r \cos \varphi_{k\!-\!2} \prod\limits_{h=k-1}^{\n\!-\!2}\sin\varphi_h \ \mbox{ as } k=3,\dots \n-1 , \quad  &
x_{\n} = r \cos \varphi_{\n\!-\!2}. 
\end{array}\]
In particular for any $x\neq 0$, $(\theta, \varphi)$ are the coordinates of  $x/|x|   \in \mathbb S_{N-1}$.
Next   for any natural number $n$ we introduce the spaces
\begin{align}\label{H1n}
H^1_{0,n} : = &  \big\{u\in    H^1_0(B)  \, : \,  u(r,\theta,\varphi) \hbox{ is even and } {2\pi}/n  \hbox{ periodic  w.r.t. } \theta , \\ \nonumber 
&  \qquad \qquad \qquad \hbox{ for every } r\in (0,1) \text{ and } \varphi \in (0,\pi)^{\n-2} \big\}, \\
\label{Xn}
X_n : = & 	H^1_{0,n} \cap C^{1,\gamma}(B), 
\intertext{and  the positive cones already used in \cite{AG-bif}, i.e.}
\label{Kn}
K_n : =  & \big\{u\in    X_n \, : \,   \hbox{is nonincreasing w.r.t.~ } \theta\in (0,\pi/n), \\ \nonumber
&  \qquad \qquad  \hbox{ for every } r\in (0,1) \text{ and } \varphi \in (0,\pi)^{\n-2}\big\}.
\end{align}
Notice that radial functions belong to $K_n$ for every $n$.
On the other side, only in dimension $N=2$ the intersection between two different cones  reduces to the radial functions alone.
Instead in dimension $N\ge 3$ it contains also nonradial functions that do not depend on the angle $\theta$.

Throughout the paper we will take the exponent $\a$  as fixed and write $\mathcal{S}^m$ for the curve of radial solutions to \eqref{H} with $m$ nodal zones, precisely
\begin{align}\label{Sn}
\mathcal{S}^m = \big\{ (p, u_p) \in (1, p_{\a})\times C^{1,\gamma}(B) \, : & \  u_p \text{ is the radial solution to \eqref{H} } \\ \nonumber 
& \text{with $m$ nodal zones and } u_p(0)>0 \big\} . \end{align}
We will show that a continuum of nonradial solutions in $K_n$ detaches from the curve $\mathcal S^m$, for some integers $n$ depending on the exponent $\a$ and the number of nodal zones $m$. To this aim we introduce the set
 \begin{equation}\label{Sigman}
 \Sigma_n^m = {\mathcal Cl} \big\{ (p, u) \in (1,p_{\a})\times K_n\setminus \mathcal{S}^m  \, : \, u \mbox{ solves \eqref{H}} \big\},
\end{equation}
where the closure is meant according to the natural norm in $(1,p_{\a})\times C^{1,\gamma}(B)$. Remark that  the set $\Sigma^m_n$ contains also the curves of radial functions $\mathcal S^{m'}$ with $m'\neq m$, but of course $\mathcal S^m$ and $\mathcal S^{m'}$ are separated. So we say that a couple $(p_n, u_{p_n}) \in \mathcal S^m \cap  \Sigma_n^m $   is  a nonradial  bifurcation point, meaning that in  every neighborhood of $(p_n,u_{p_n})$  in the product space $(1,p_{\a})\times C^{1,\g}_0( B )$ there exists a couple $(q,v)$ such that $v$ is a nonradial solution of \eqref{H} related to the exponent $q$. 
In this case we set
\begin{equation}\label{cn}
\mathcal C^m_n \ \mbox{ the closed connected component of $\Sigma^m_n$ containing $(p_n,u_{p_n})$}
\end{equation}and we shall refer it as the \textquotedblleft branch\textquotedblright  departing from $(p_n, u_{p_n}) $, with a little misuse of language.
We will also write $[t]$ and $\lceil t\rceil$, respectively, for the floor and the ceiling of a real number $t$, i.e.
\[ [t]=\max \left\{ n\in\mathbb Z \, : \, n \le t\right\} , \quad \lceil t\rceil= \min\left\{n\in\mathbb Z \, : \, n \ge t \right\} .\]
Eventually the same reasoning enables us to prove several bifurcation results. First we produce $\lceil\frac{\a}{2}\rceil$ global branches of positive nonradial solutions, precisely

\begin{theorem}[Bifurcation from positive solutions]\label{teo:bif-H-1}
			In any dimension $N\ge 2$ and for every $\a>0$, there are at least $\lceil\frac{\a}{2}\rceil$ different points along the curve $\mathcal S^1$ where a nonradial bifurcation occurs.
			More precisely for every $n=1, \dots \lceil\frac{\a}{2}\rceil$ there exists a nonradial bifurcation point $(p_n,u_{p_n})\in \mathcal S^1 \cap \Sigma_n^1$ and the respective branch $\mathcal C^1_n$ has the following global properties
			\begin{enumerate}[i)]
				\item $\mathcal C^1_n$ is made up of positive solutions and unbounded, i.e. it contains a sequence $(p_k, u_k)$ with  $\|u_k\|_{C^{1,\gamma}}\to \infty$ or   $p_k\to p_{\a}$.
				\item In dimension $N=2$ the branches are separated, in the sense that their intersection contains at most isolated points along the curve of positive radial solutions $\mathcal S^1$.
				\item	In dimension $N\ge 3$ two different branches can only have in common couples $(p, v)$, where $v$ are positive solutions to \eqref{H} which	do not depend on the angle $\theta$, and their overlapping  can even make up a  continuum. 	
			\end{enumerate}
\end{theorem}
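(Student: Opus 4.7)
I would recast \eqref{H} as a fixed point equation $u=T_p u$ with the compact operator $T_p u:=(-\Delta)^{-1}\bigl(|x|^\alpha|u|^{p-1}u\bigr)$ on $C^{1,\gamma}_0(\overline B)$. The first step is to verify that $T_p$ leaves invariant the subset $K_n^+$ of nonnegative functions in $K_n$: positivity follows from the maximum principle, the evenness and $2\pi/n$-periodicity in $\theta$ from the analogous symmetries of the nonlinearity, and the monotonicity in $\theta\in(0,\pi/n)$ from a reflection (foliated Schwarz) argument, since $|x|^\alpha|u|^{p-1}u$ inherits the monotonicity whenever $u$ does. The radial solution $u_p\in\mathcal S^1$ is then a fixed point of $T_p$ in every $K_n^+$, and the whole bifurcation problem is recast as detecting a jump, as $p$ varies in $(1,p_\alpha)$, of the Dancer fixed point index $\mathrm{ind}_{K_n}(u_p,T_p)$.

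\textbf{Index computation.} Following \cite{D83,D92,AG-bif}, this index is driven by the linearization $L_p:=-\Delta-p|x|^\alpha u_p^{p-1}$ restricted to the closed linear span $H^1_{0,n}$ of $K_n$: it equals $\pm 1$ according to the parity of the number of negative eigenvalues of $L_p$ inside $H^1_{0,n}$. Combining the singular Sturm--Liouville description of the Morse index from \cite{AG-sing-1} with the counts in \cite{AG-N=2,AG-N>3,Ama} discussed in Section~\ref{sec:prel}, the angular sector associated to index $n$, for each $n=1,\dots,\lceil\alpha/2\rceil$, contributes an odd variation to the said count as $p$ moves from near $1$ to near $p_\alpha$. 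Hence for each such $n$ there is a value $p_n\in(1,p_\alpha)$ where $\mathrm{ind}_{K_n}(u_p,T_p)$ jumps, and the standard index-based bifurcation argument produces a connected component $\mathcal C^1_n\subset\Sigma^1_n$ of nontrivial solutions issuing from $(p_n,u_{p_n})$; these solutions are nonradial because of the Gidas--Ni--Nirenberg result \cite{GNN} on positive radial symmetry.

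\textbf{Globality and positivity.} Item (i) is obtained via the cone version of Rabinowitz's alternative due to Dancer: $\mathcal C^1_n$ is either unbounded in $(1,p_\alpha)\times C^{1,\gamma}$ or it returns to the trivial curve $\mathcal S^1$ at a further point. Outside the isolated bifurcation parameters the implicit function theorem applied to $u\mapsto u-T_p u$ on $K_n$ excludes the return, and positivity is preserved along the branch, since a limiting boundary point in $\partial K_n^+$ would be a nonnegative solution of \eqref{H} vanishing in the interior, contradicting the Hopf lemma. Therefore the only possibility left is unboundedness, i.e.\ either $\|u_k\|_{C^{1,\gamma}}\to\infty$ or $p_k\to p_\alpha$.

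\textbf{Branch separation.} Any solution lying on both $\mathcal C^1_n$ and $\mathcal C^1_{n'}$ with $n\neq n'$ must belong to $K_n\cap K_{n'}$. A direct inspection of \eqref{Kn} shows that in dimension $N=2$ this intersection reduces to the radial functions, so the two branches can meet only at isolated points of $\mathcal S^1$, which gives (ii); in dimension $N\ge 3$ the intersection also contains functions depending only on $r$ and $\varphi$ (i.e.\ $\theta$-independent), which may form a continuum shared by the two branches, yielding (iii). The main obstacle I foresee is precisely the parity step: showing that the crossing detected in Section~\ref{sec:prel} produces an \emph{odd} variation of the count of negative eigenvalues \emph{inside the sector} $H^1_{0,n}$, not compensated by simultaneous crossings from other sectors at the same parameter, is what actually converts the spectral information into a jump of $\mathrm{ind}_{K_n}$.
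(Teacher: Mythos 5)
Your scaffolding (fixed point reformulation, invariance of the cone $K_n$, global alternative, separation via $K_n\cap K_{n'}$) matches the paper, but the core step --- the index computation --- is left open, and the formula you propose for it is not the one that makes the argument work. You assert that $\mathrm{ind}_{K_n}(u_p,T_p)=\pm1$ ``according to the parity of the number of negative eigenvalues of $L_p$ inside $H^1_{0,n}$''; that is the ordinary Leray--Schauder index on the linear space $X_n$, not Dancer's index relative to the cone. The cone index can equal $0$, and this is precisely what the paper exploits: Lemma~\ref{soloi=1} shows, via Dancer's \emph{property $\alpha$} characterization, that $\mathrm{ind}_{K_n}(p,u_p)=0$ when $\nu_1(p)<-\bigl(\tfrac{2}{2+\a}\bigr)^2 n(N-2+n)$ and equals $\deg_{X_n}(I-T(p,\cdot))=\pm1$ otherwise. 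Hence the index is governed by the \emph{first} singular eigenvalue alone, and a single sign change of $\nu_1(p)+\bigl(\tfrac{2}{2+\a}\bigr)^2 n(N-2+n)$ between the two ends of the existence range forces a jump from a nonzero value to $0$. The ``main obstacle'' you flag at the end --- proving that the crossing produces an odd, uncompensated variation of the eigenvalue count in the sector --- is exactly the difficulty that this mechanism is designed to bypass; as written, your proof leaves that obstacle unresolved, so no jump of the index has actually been established.

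A second, smaller gap: you never verify the spectral condition that yields the stated range $n=1,\dots,\lceil\a/2\rceil$. The paper does this explicitly by checking \eqref{lem:cedelbuono}: from \eqref{nup=1} one has $\lim_{p\to1}\nu_1(p)=0$ (since $\beta_1=\tfrac{N-2}{2+\a}$ for $m=1$), while \eqref{nu-p=palpha} and \eqref{nu-p=infty-1} give $\lim_{p\to p_\a}\nu_1(p)=-\tfrac{2N-2+\a}{2+\a}$, and the inequality $\bigl(\tfrac{2}{2+\a}\bigr)^2 n(N-2+n)<\tfrac{2N-2+\a}{2+\a}$ is equivalent to $n<\tfrac{2+\a}{2}$. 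Without this computation the count $\lceil\a/2\rceil$ is unsupported. Finally, in the global alternative the case to exclude is $p_k\to1$ (Proposition~\ref{prop:bif} lists it as a third possibility); the paper rules it out by the uniqueness of positive solutions near $p=1$ from \cite{AG-bif}, not by an implicit function theorem argument about returning to the trivial branch. Your treatment of positivity and of the separation properties (ii)--(iii) is essentially the paper's.
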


In the disc solutions enjoying the same symmetry properties 
have been produced in \cite{EPW} 
by the Lyapunov-Schmidt reduction method, and in \cite{AG-N=2} by minimizing the energy associated to \eqref{H} in the space $H^1_{0,n}$. In this last paper it has been proved that such \textquotedblleft least energy $n$-invariant  solutions\textquotedblright are nonradial and different one from another at least for $p\in (p_n,+\infty)$, with $p_n$ the same exponent appearing here. On the other hand, they are certainly radial for $p$ close to one, thanks to the uniqueness result in \cite{AG-bif}. It is therefore natural to think that  the branches of bifurcating solutions shown by Theorem \ref{teo:bif-H-1} are made up by these  least energy $n$-invariant  solutions, and so they do exist  for every $p\in (p_n, \infty)$, and are separated. 
\\
In higher dimension Theorem \ref{teo:bif-H-1} improves the bifurcation result obtained in \cite{AG14}, which holds for $\a\in(0,1]$ and produces only one branch of nonradial solutions.
Nonradial solutions with similar symmetries have been produced by the finite-dimensional reduction method: in particular \cite{PS} concerns the slightly subcritical case and exhibits  solutions which blow up when $p$ approaches the critical Sobolev exponent,  while \cite{HCZ} proves the existence  also in the critical case.  
Besides nonradial solutions  do exist also for $p$ close to $p_{\a}$, as showed in \cite{FN17}. It is very likely that some of the nonradial solutions found in Theorem \ref{teo:bif-H-1} coincide with the ones in \cite{FN17}, where the specular viewpoint (bifurcation w.r.t. $\a$) is adopted.

\

Coming to nodal solutions, the asymptotic Morse index and consequently  the number of nonradial branches depend on the dimension. We therefore state the bifurcation results separately.
\\
In the plane the set $\Sigma^2_n$ is nonempty at least for $n=\left[\frac{2+\a}{2}\beta+1\right], \dots \left\lceil\frac{2+\a}{2}\kappa-1\right\rceil$, where  $\beta\approx 2,\!305$ and $\kappa  \approx 5,\!1869 $ are fixed numbers  related to the computation of the Morse index at $p$ next to 1 and at infinity, respectively, whose characterization is recalled in Section \ref{sec:prel}.  Precisely we have

\begin{theorem}[Bifurcation from nodal solutions in dimension $N=2$]\label{teo:bif-H-m-N=2}
		Consider problem \eqref{H} in dimension $N=2$. 
		 For every $\a\ge 0$ there are at least $\left\lceil\frac{2+\a}{2}\kappa-1\right\rceil -\left[\frac{2+\a}{2}\beta\right] $  different points along the curve $\mathcal S^2$ where nonradial bifurcation occurs.
		 More precisely for every $n=\left[\frac{2+\a}{2}\beta+1\right], \dots \left\lceil\frac{2+\a}{2}\kappa-1\right\rceil$ there exists a nonradial bifurcation point $(p_n,u_{p_n})\in \mathcal S^2 \cap \Sigma_n^2$ and the respective branches $\mathcal C^2_n$ have the following properties
	\begin{enumerate}[i)]	
		\item There is a ball $\mathcal B$ in $(1,\infty)\times C^{1,\gamma}(B)$ centered at $(p_n,u_{p_n})$ such that $\mathcal C^2_n\cap \mathcal B\setminus\{(p_n, u_{p_n})\}$ is made up of nonradial solutions with $2$ nodal zones, one of which contains $x=0$ and is  homeomorphic to a disc.
		\item Every branch	contains a sequence $(p_k, u_k)$ with either $\|u_k\|_{C^{1,\gamma}}\to \infty$, or $p_k\to \infty$, or possibly $p_k\to 1$ and $u_k$ converges to an eigenfunction of 
\begin{equation}\label{prima-autof-weight} 
\left\{
\begin{array}{ll}
-\Delta \omega= \mu |x|^{\a} \omega & \text{ in } B, \\
\omega = 0 & \text{ on } \partial B ,  
\end{array}\right.
\end{equation} 	which belongs to $K_n$.	
\item Two different branches can only have radial solutions in common. 
Precisely $\mathcal C^2_n \cap \mathcal C^2_{n'} \cap \mathcal S^2$ contains at most isolated points, and if there is some $m\ge 3$ such that $\mathcal C^2_n \cap \mathcal C^2_{n'} \cap \mathcal S^m$ is nonempty, then $\mathcal S^m \subset \mathcal C^2_n \cap \mathcal C^2_{n'} $.
\end{enumerate}	
\end{theorem}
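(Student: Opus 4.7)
The plan is to recast (\ref{H}) as a fixed point equation in the positive cone $K_n$ and then apply Dancer's theory of the index of fixed points in cones, in the same spirit of \cite{D92, AG-bif}. I would introduce the compact operator $T_p : X_n \to X_n$ defined by $T_p(u)=(-\D)^{-1}\big(|x|^\a |u|^{p-1}u\big)$, so that solutions of (\ref{H}) in $X_n$ coincide with fixed points of $T_p$ and the radial family $(p,u_p)$ forms a curve of fixed points in $K_n$ for every $n$. Nonradial bifurcation points on $\mathcal S^2$ would then be detected by producing jumps of the fixed-point index $i_{K_n}(T_p,u_p)$ as $p$ varies.

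To locate the jumps, I would use the characterization of the Morse index via the singular Sturm--Liouville eigenvalues recalled in Section \ref{sec:prel}. The crucial input is that in the plane the Morse index of the $m=2$ radial solution behaves like $\frac{2+\a}{2}\beta$ as $p\to 1^+$ and like $\frac{2+\a}{2}\kappa$ as $p\to\infty$, so for every integer $n$ in the range $\big[\frac{2+\a}{2}\beta+1\big],\dots,\big\lceil\frac{2+\a}{2}\kappa-1\big\rceil$ an eigenfunction with angular frequency exactly $n$ enters or leaves the kernel of the linearized operator at some $p_n\in(1,\infty)$. Combining the singular-eigenvalue decomposition with a tangent-cone analysis along the lines of \cite{AG-bif}, the contribution of this angular mode to $i_{K_n}(T_p,u_p)$ is nonzero across $p_n$, and Dancer's jump criterion then yields the bifurcation point $(p_n,u_{p_n})\in \mathcal S^2\cap \Sigma_n^2$.

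Property (i) would then follow from a local continuity argument on the nodal set: at $u_{p_n}$ the inner nodal zone is a small disc around the origin and any $C^{1,\g}$-close solution in $K_n$ preserves this topological type. The global alternative (ii) would follow from a Rabinowitz-type continuation for cone-valued maps: the connected component $\mathcal C^2_n$ cannot terminate at an interior regular point of $I-T_p$, so it must either escape every bounded region of $(1,\infty)\times C^{1,\g}(B)$ (giving $\|u_k\|_{C^{1,\g}}\to\infty$ or $p_k\to\infty$) or approach $p=1$; in this last case a standard rescaling would identify the limit, up to subsequences, as an eigenfunction of \eqref{prima-autof-weight} lying in $K_n$.

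For (iii) I would invoke the remark made in Section \ref{sec:stat} that in dimension $N=2$ the intersection $K_n\cap K_{n'}$ for $n\ne n'$ contains only radial functions; consequently $\mathcal C^2_n\cap\mathcal C^2_{n'}$ sits inside the disjoint union of the radial curves $\mathcal S^{m'}$, and the real-analytic dependence of $u_p$ on $p$ together with the connectedness of the components gives the dichotomy between discrete intersection (the generic case on $\mathcal S^2$) and full inclusion of a whole $\mathcal S^m$. The main obstacle I expect to be the cone-index computation at $(p_n,u_{p_n})$: the cone $K_n$ enforces only monotonicity in $\theta$, so its tangent and normal cones are delicate, and one must verify that the $n$-th angular mode is really a direction along which the index jumps — this is exactly what selects the admissible range of $n$ and requires adapting carefully the singular-eigenvalue formalism of \cite{AG-sing-1} to the cone setting with the weight $|x|^\a$.
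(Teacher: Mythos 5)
Your proposal follows essentially the same route as the paper: Dancer's fixed-point index in the cones $K_n$, detection of the index jump through the singular Sturm--Liouville eigenvalues, a global continuation alternative, and the observation that $K_n\cap K_{n'}$ is radial in the plane to separate branches. The one point worth sharpening is that the admissible range of $n$ is selected not by the total Morse index (whose limit at $p\to\infty$ also carries a $2\lceil\a/2\rceil$ contribution from $\nu_2$) but by the first singular eigenvalue alone, via the condition $-\beta^2>-\big(\tfrac{2n}{2+\a}\big)^2>-\kappa^2$; this is exactly the content of the paper's Lemma \ref{soloi=1} (the cone index is governed by $\nu_1$ through Dancer's property $\a$), which is the ``tangent-cone analysis'' you correctly flag as the main technical step.
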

	
The possibility that $p_k\to 1$ but $u_k$ stays bounded remains open because  the uniqueness of nodal solutions does not hold either in a neighborhood of $p=1$, see \cite[Theorem 1.3]{Ama}. 
Concerning property {\it iii)}, i.e. the possible overlapping of two different branches, we are not aware of any technique which enables to capture the formation of further nodal zones and/or a secondary  bifurcation. Consequently a nonradial branch could, in principle, touch another radial curve $\mathcal S^m$ with $m\ge 3$, and then incorporate it because of the way in which $\Sigma^2_n$ and $\mathcal C^2_n$ have been defined.

Theorem \ref{teo:bif-H-m-N=2} applies also to $\a=0$, i.e.~to the Lane-Emden equation, giving back  \cite[Theorem 1.2]{GI} since in this particular case $\left[\frac{2+\a}{2}\beta+1\right]= 3$ and $\left\lceil\frac{2+\a}{2}\kappa-1\right\rceil=5$.
\\
For $\a>0$ it is worth comparing this existence result with the ones in \cite{Ama} and in \cite{AG-N=2}, both concerning the least energy $n$-invariant  nodal solutions, that we denote hereafter by $U_{p,n}$.
For $n=1,\dots \left\lceil\frac{2+\a}{2}\beta-1\right\rceil$,  $U_{p,n}$ is nonradial for both $p$ close to 1 and  large.
It seems that in this case  $U_{p,n}$ is nonradial for every $p>1$ and the curve $p\mapsto  U_{p,n}$ does not intersect the curve of radial solutions. This is certainly true for $n=1$, i.e. the least energy nodal solution. 
Conversely for  $n=\left[\frac{2+\a}{2}\beta+1\right], \dots \left\lceil \frac{2+\a}{2}\kappa-1\right\rceil$, \cite[Proposition 4.10]{Ama} and \cite[Theorem 1.6]{AG-N=2} yield  that  $U_{p,n}$ are radial for $p$ close to 1, and then nonradial (and different one from another) when $p$ is large. Therefore
the curves  $p\mapsto  U_{p,n}$  coincide with the one of radial solutions for $p\in (1,p_n)$, and then they give rise to the  nonradial bifurcation stated by Theorem \ref{teo:bif-H-m-N=2}. 

Only bifurcation from the curve $\mathcal S^2$ is taken into account, since the behaviour of nodal solutions as $p\to \infty$ is known only in the case of two nodal zones. When this paper was already finished we came to know that a very recent preprint by Ianni and Saldana  \cite{IS} describes the asymptotic profile of every radial solutions. Starting from this it is possible, in principle, to compute exactly their Morse index and then the same arguments used here produce bifurcation also in the general case.

\

In dimension $N\ge 3$ the set $\Sigma^m_n$ is nonempty at least for $n=2+ \big[\frac{\a}{2}\big]$, $\dots$ $n_{\a}^m$,
where the number $n_{\a}^m\ge 2(m-1)+ [\a(m-1)]$ is characterized later on in Remark \ref{n-def} and can be numerically computed.

\begin{theorem}[Bifurcation from nodal solutions in dimension $N\ge 3$]\label{teo:bif-H-m}
	Consider problem \eqref{H} in dimension $N\ge3$.  For every $\a\ge 0$ and $m\ge 2$, at least $2m-3+ [\a(m-1)]- [\a/2]$  different nonradial bifurcations take place along the curve $\mathcal S^m$.
	More precisely for every $n=2 + \big[\frac{\a}{2}\big] , \dots n_{\a}^m $ there exists a nonradial bifurcation point $(p_n,u_{p_n})\in \mathcal S^m \cap \Sigma_n^m$ and the respective branches $\mathcal C^m_n$ have the following properties
\begin{enumerate}[i)]	
	\item There is a ball $\mathcal B$ in $(1,p_{\a})\times C^{1,\gamma}(B)$ centered at $(p_n,u_{p_n})$ such that $\mathcal C^m_n\cap \mathcal B\setminus\{(p_n, u_{p_n})\}$ is made up of nonradial solutions with $m$ nodal zones, one of which contains $x=0$ and is  homeomorphic to a ball, while the other ones are homeomorphic to spherical shells.
	\item Every branch contains a sequence $(p_k, u_k)$ with either $\|u_k\|_{C^{1,\gamma}}\to \infty$,  or $p_k\to p_{\a}$, or possibly $p_k\to 1$ and $u_k$ converges to an eigenfunction of  \eqref{prima-autof-weight} 	which belongs to $K_n$.
	\item The intersection between two different branches, if non-empty, is made up of nodal solutions which do not depend by the angle $\theta$.
\end{enumerate}	
\end{theorem}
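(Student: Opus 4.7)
The plan is to apply the fixed-point-in-cones bifurcation theory introduced by Dancer \cite{D83} and adapted to sign-changing solutions in \cite{AG-bif}. For each admissible $n$ I recast \eqref{H} as the fixed point equation $u=T_p(u)$ with $T_p(u):=(-\Delta)^{-1}\bigl(|x|^{\a}|u|^{p-1}u\bigr)$ on the invariant Banach space $X_n$ defined in \eqref{Xn}. A standard reflection argument (reflecting across hyperplanes of the form $\{\theta=k\pi/n\}$ and using the maximum principle on the resulting Dirichlet sector problem) shows $T_p(K_n)\subset K_n$; the operator is compact and $C^1$-smooth, and every radial solution $u_p$ belongs to $K_n$ for every $n$.

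The pivotal step is to compute the Leray-Schauder fixed point index $i_{K_n}(T_p,u_p)$. Setting $L_p:=T_p'(u_p)=(-\Delta)^{-1}\bigl(p|x|^{\a}|u_p|^{p-1}\,\cdot\,\bigr)$, the cone index formula of \cite{AG-bif} yields, whenever $1$ is not an eigenvalue of $L_p$ on $X_n$,
\[ i_{K_n}(T_p,u_p)=(-1)^{\mu_n(p)}, \]
where $\mu_n(p)$ is the number of eigenvalues of $L_p$ on $H^1_{0,n}$ strictly greater than $1$, counted with multiplicity. Thanks to the singular Sturm-Liouville characterization of the Morse index of $u_p$ developed in \cite{AG-sing-1,AG-sing-2}, $\mu_n(p)$ decomposes along spherical harmonics and is fully computable from the asymptotic analysis recalled in Section \ref{sec:prel}: its limit values as $p\to 1^+$ (via the Bessel-function counts of \cite{Ama}) and as $p\to p_{\a}^-$ (via \cite{AG-N>3}) differ by an odd integer for every $n$ in the range $\{2+[\a/2],\dots,n_{\a}^m\}$. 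The cardinality of this range equals $2m-3+[\a(m-1)]-[\a/2]$ in view of the lower bound $n_{\a}^m\ge 2(m-1)+[\a(m-1)]$ of Remark \ref{n-def}. Hence for each such $n$ there exists $p_n\in(1,p_{\a})$ at which the index jumps, producing the required bifurcation point $(p_n,u_{p_n})\in\mathcal{S}^m\cap\Sigma^m_n$.

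Global continuation of the connected component $\mathcal{C}^m_n$ follows from the Rabinowitz alternative in cones: $\mathcal{C}^m_n$ must be unbounded in $(1,p_{\a})\times C^{1,\g}(B)$, either in $C^{1,\g}$-norm or in parameter. In the scenario $p_k\to 1$, a rescaling $\tu_k:=u_k/\|u_k\|_\infty$ reduces \eqref{H} (after dividing by $\|u_k\|_\infty^{p_k-1}$) to the weighted linear eigenvalue problem \eqref{prima-autof-weight} in the limit, with limiting eigenfunction in $K_n$ by closedness of the cone; this gives (ii). For (iii), any common point $(p,v)\in\mathcal{C}^m_n\cap\mathcal{C}^m_{n'}$ with $n\neq n'$ satisfies $v\in K_n\cap K_{n'}$, which in dimension $N\ge 3$ consists precisely of functions independent of $\theta$, as noted after \eqref{Kn}.

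For property (i), on the nonradial part of $X_n$ the kernel of $I-L_{p_n}$ is one-dimensional by the explicit description in \cite{AG-sing-1} (spanned by a radial factor times a single spherical harmonic of the degree $k_n$ selected by the symmetry class $K_n$), so Crandall-Rabinowitz applies and the branch is locally $C^{1,\g}$-close to $(p_n,u_{p_n})$; the nodal regions are thus small perturbations of the radial ones, yielding a topological ball at the origin plus concentric spherical shells. The main obstacle, bypassed by working in $K_n$ rather than $H^1_0(B)$, is that the full kernel of $I-L_{p_n}$ has dimension $2k_n+1$ due to rotational invariance, so no analytic bifurcation theorem applies directly; the cone restriction isolates the odd part of the spectral flow and Dancer's parity argument converts it into global bifurcation.
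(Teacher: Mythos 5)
Your overall framework (Dancer's fixed--point index in the cones $K_n$, invariance of $K_n$ under $T_p$, the Rabinowitz--type global alternative, the rescaling argument for $p_k\to 1$, and the identification of $K_n\cap K_{n'}$ with $\theta$--independent functions) matches the paper. But the pivotal step is wrong, and it is wrong in precisely the way that the cone machinery is designed to avoid. You assert the index formula $i_{K_n}(T_p,u_p)=(-1)^{\mu_n(p)}$ with $\mu_n(p)$ the Morse index of $u_p$ restricted to $H^1_{0,n}$, and then claim that $\mu_n(p)$ changes by an \emph{odd} integer between $p\to 1$ and $p\to p_\a$ for every $n$ in the admissible range. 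That formula is the Leray--Schauder index in the \emph{linear space} $X_n$, not Dancer's index in the cone; and the odd--change claim is exactly what cannot be verified for nodal solutions, because all $m$ singular eigenvalues $\nu_1(p),\dots,\nu_m(p)$ contribute to $\mu_n(p)$ and each may cross several thresholds $-\big(\tfrac{2}{2+\a}\big)^2 j(N-2+j)$ (with $j$ a multiple of $n$) as $p$ varies, with no control on the parity of the total count. The paper's Lemma \ref{soloi=1} shows instead that $\mathrm{index}_{K_n}(p,u_p)$ equals $0$ when $\nu_1(p)<-\big(\tfrac{2}{2+\a}\big)^2 n(N-2+n)$ (Dancer's property $\a$ holds) and equals $\pm 1$ otherwise: the cone index is governed by the \emph{first} singular eigenvalue alone, so bifurcation follows from the single sign change encoded in \eqref{lem:cedelbuono}, i.e.\ from comparing $\lim_{p\to1}\nu_1(p)=\big(\tfrac{N-2}{2+\a}\big)^2-\beta_1^2$ with $\lim_{p\to p_\a}\nu_1(p)=-\tfrac{2N-2+\a}{2+\a}$, which after rearrangement yields exactly the range $n=2+[\a/2],\dots,n_\a^m$. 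Without this reduction to $\nu_1$ your argument does not close.

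A second, smaller gap is in your treatment of property \emph{i)}. You invoke Crandall--Rabinowitz on the ground that the kernel of $I-L_{p_n}$ in the nonradial part of $X_n$ is one--dimensional, but at the degeneracy point $p_n$ several eigenvalues $\nu_i(p_n)$ may simultaneously satisfy \eqref{non-radial-degeneracy-H} for different multiples $j$ of $n$, so the kernel in $X_n$ need not be one--dimensional, and no transversality condition is checked. The paper does not need any of this: property \emph{i)} is a plain consequence of the continuity of the branch in $C^{1,\gamma}$, since solutions close to $u_{p_n}$ inherit its nodal structure. The counting of the admissible $n$ via $n_\a^m\ge 2(m-1)+[\a(m-1)]$ and the arguments for \emph{ii)} and \emph{iii)} in your proposal are consistent with the paper.
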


The branches of nodal bifurcating solution in dimension $N\ge 3$ can overlap along radial solutions with a different number of nodal zones, but also along nonradial solutions that do not depend by the angle $\theta$.
\\
The statement of Theorem \ref{teo:bif-H-m} is  new also in the simpler case $\a=0$, to the author's knowledge.
For the reader's convenience, we state separately the bifurcation result concerning the Lane-Emden equation. 

 	\begin{theorem}[Bifurcation  for the Lane Emden equation in dimension $N\ge 3$]\label{teo:bif-LE}
		Consider problem \eqref{LE} in dimension $N\ge3$.  For every $m\ge 2$ the curve $\mathcal S^m$ bifurcates at $2m-3$ points, at least.
		More precisely for every $n=2 , \dots n^m_0 $  there exists a nonradial bifurcation point $(p_n,u_{p_n})\in \mathcal S^m \cap \Sigma_n^m$ 
		and   the continuum detaching at  $(p_n, u_{p_n}) $, i.e. $\mathcal C^m_n$ has the following
	\begin{itemize}	
		\item Local property: there is a ball $\mathcal B$ in $(1,p_{0})\times C^{1,\gamma}(B)$ centered at $(p_n,u_{p_n})$ such that $\mathcal C^m_n\cap \mathcal B\setminus\{(p_n, u_{p_n})\}$ is made up of nonradial solutions with $m$ nodal zones, one of which contains $x=0$ and is  homeomorphic to a ball, while the other ones are homeomorphic to spherical shells,
	\item Global property: every branch  contains a sequence $(p_k, u_k)$ with either $\|u_k\|_{C^{1,\gamma}}\to \infty$, or $p_k\to p_{0}$, or possibly $p_k\to 1$ and $u_k$ converges to an eigenfunction of  
		\begin{equation}\label{prima-autof} 
		\left\{
		\begin{array}{ll}
		-\Delta \omega= \mu \, \omega & \text{ in } B, \\
		\omega = 0 & \text{ on } \partial B ,
		\end{array}\right.
		\end{equation}	
		which belongs to $K_n$.
	\item Separation property: the intersection between two different branches, if non-empty, is made up of nodal solutions which do not depend by the angle $\theta$.
	\end{itemize}	
	\end{theorem}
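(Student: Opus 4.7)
The plan is to deduce Theorem \ref{teo:bif-LE} as the specialization of Theorem \ref{teo:bif-H-m} to $\alpha=0$. With $\alpha=0$ one has $\bigl[\tfrac{\alpha}{2}\bigr]=0$ and $[\alpha(m-1)]=0$, so the admissible range of $n$ becomes $\{2,\dots,n^m_0\}$ and the count $2m-3+[\alpha(m-1)]-[\alpha/2]$ reduces to $2m-3$; the local, global and separation properties transcribe verbatim. I would therefore run the blueprint of Section \ref{sec:bif} for \eqref{LE}. First I would rewrite the equation as a compact fixed-point problem $u=T_p(u):=(-\Delta)^{-1}(|u|^{p-1}u)$ on $C^{1,\gamma}_0(\overline{B})$; the operator is jointly continuous in $(p,u)$, and maps each cone $K_n$ into itself by a foliated-symmetrization/maximum-principle argument as in \cite{AG-bif}, so the radial $u_p$ are fixed points inside every $K_n$, while the elements of $\Sigma^m_n\setminus\mathcal S^m$ are exactly the nonradial fixed points of $T_p$ in $K_n$.

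The driving mechanism is a change in Dancer's fixed-point index of $u_p$ inside $K_n$ across $(1,p_0)$. Separating variables in spherical harmonics of degree $j\ge 0$ and using the reduction to the singular Sturm-Liouville problem of \cite{AG-sing-1} recalled in Section \ref{sec:prel}, the Morse index of $u_p$ splits as a sum of finite counts $N_j(p)$, one per angular mode. These counts are known explicitly as $p\to 1^+$ by \cite{Ama} and as $p\to p_0^-$ by \cite{DIP-N>3}. For every $n\in\{2,\dots,n^m_0\}$ the comparison of the two endpoints forces $N_n(p)$ to change by an odd integer at some $p_n\in(1,p_0)$, where an eigenvalue of the linearization along a spherical harmonic of degree exactly $n$ crosses zero. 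The cone $K_n$ is tailored so that such an eigenfunction is admissible in $K_n$ but lies outside $K_{n'}$ for $n'>n$, and Dancer's index formula in the form used in \cite{AG-bif} then yields an odd jump of $i_{K_n}(u_p,T_p)$ at $p_n$. The Leray-Schauder/Dancer global bifurcation theorem produces an unbounded connected component $\mathcal C^m_n\subset\Sigma^m_n$ branching off at $(p_n,u_{p_n})$, and a standard Rabinowitz-type argument, together with a blow-up analysis at $p\to 1$ whose limits are eigenfunctions of \eqref{prima-autof} lying in $K_n$, gives the global alternative in the second bullet.

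The local property would then be read off from a Crandall-Rabinowitz-type expansion $u=u_p+\varepsilon\, v_n+o(\varepsilon)$, where $v_n$ is the kernel element, equal to a radial profile times a spherical harmonic of degree $n\ge 2$: since $v_n(0)=0$ and $n\ge 2$, the nodal set of $u$ is a $C^{1,\gamma}$-small perturbation of the concentric nodal spheres of $u_p$, hence the central nodal zone stays homeomorphic to a ball and the remaining ones to spherical shells. The separation property is immediate from the description of $K_n\cap K_{n'}$ recalled after \eqref{Kn}: in dimension $N\ge 3$ two different cones meet exactly in $\theta$-independent functions, so common elements of two distinct branches must be of this form. The genuinely hard step is the cone-index computation at $p_n$: for nodal radial solutions the linearization carries a multi-parameter kernel (several radial overtones and several angular degrees), and one must isolate, among all the eigenvalue crossings in $(1,p_0)$, precisely those detected by the specific cone $K_n$; the integer $n^m_0$ of Section \ref{sec:prel} is defined exactly to make this selection unambiguous, and the rest of the argument is a combinatorial count running in parallel with the proof of Theorem \ref{teo:bif-H-m}.
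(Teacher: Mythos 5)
Your proposal takes essentially the same route as the paper: Theorem \ref{teo:bif-LE} is not proved separately there but is obtained precisely as the specialization of Theorem \ref{teo:bif-H-m} to $\a=0$, for which $[\a/2]=[\a(m-1)]=0$ gives the range $n=2,\dots,n^m_0$ and the count $2m-3$ via $n^m_0\ge 2(m-1)$; your sketch of the underlying machinery (the fixed-point formulation, Dancer's cone index, the sign change of the first singular eigenvalue against $-\big(\tfrac{2}{2+\a}\big)^2 n(N-2+n)$, the global alternative, and the identification of $K_n\cap K_{n'}$ with $\theta$-independent functions) matches Section \ref{sec:bif}.

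One caveat: your derivation of the local property from a Crandall--Rabinowitz expansion $u=u_p+\e v_n+o(\e)$ is not available in this framework. The whole point of passing to the cone index is that at a degeneracy point of a nodal radial solution the kernel of $L_{p}$ is in general multi-dimensional (several pairs $(i,j)$ can satisfy \eqref{non-radial-degeneracy-H} simultaneously), so no transversal simple-eigenvalue crossing, and hence no such one-parameter expansion, is established. The paper instead obtains the local property directly from the continuity of the branch in $C^{1,\gamma}$: solutions close to $u_{p_n}$ inherit its nodal structure, so the innermost nodal zone remains homeomorphic to a ball and the others to spherical shells. The conclusion you state is correct, but the tool you invoke for it would fail; the continuity argument is both sufficient and the one actually used.
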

There is numerical evidence that $ n^m_0= 2(m-1)$ in any dimension $N\ge 3$, so that Theorem \ref{teo:bif-LE} provides exactly $2m-3$ branches of nonradial solutions. In particular, in the case of $2$ nodal zones, there should be only one branch in dimension $N\ge 3$, while 3 different branches have been produced in dimension $N=2$.  The planar case indeed differs from the other ones, as already observed in several occasions.

\

Let us mention in passing that the number of nonradial branches produced in Theorems \ref{teo:bif-H-1}, \ref{teo:bif-H-m-N=2} and \ref{teo:bif-H-m} goes to infinity when $\alpha\to \infty$, which is consistent with the specular study (bifurcation w.r.t. $\a$) performed in \cite{KW}.

\section{Preliminaries on the computation of the Morse index}\label{sec:prel}

To emphasize the dependence on the exponent $p\in (1, p_{\a})$, we take the exponent $\a\ge 0$ and the number of nodal zones $m$ as fixed and denote by $u_p$ the unique radial solution to \eqref{H} with $m$ nodal zones which is positive at the origin.
We also write 
\begin{align}
\label{linearized}
L_{p} \psi &=-\Delta \psi-p |x|^\a |u_p|^{p-1}\psi, \\
\label{forma-quadratica}
{\mathcal Q}_p(\psi)& =\int_\Omega \left(|\nabla \psi|^2 -p|x|^\a |u_p|^{p-1}\psi^2\right) dx
\end{align}
for the linearized operator at $u_p$  and the related quadratic form, respectively. They will be considered on the space $H^1_0(B)$, or in one of its subspaces specified case-by-case.
\\
The Morse index, that we denote hereafter by $m(u_p)$, is the maximal dimension of a subspace of $H^1_0(B)$ in which the quadratic form ${\mathcal Q}_p$ 
	is negative defined, or equivalently the number of the negative eigenvalues of  
	\begin{equation}\label{standard-eig-prob}
	L_p \psi = \Lambda \psi , \qquad \psi \in H^1_0(B).
	\end{equation}
For radial solutions one can also look at the radial Morse index, denoted by $m_\rad(u_p)$, i.e. the number of the negative eigenvalues of for \eqref{standard-eig-prob} whose relative eigenfunction is $H^1_{0,\rad}(B)$, the subspace of $H^1_0(B)$ given by radial functions. 
\\
As explained in full details in \cite{AG-sing-1}, this matter can be regarded through a {\it singular} eigenvalue problem associated to the linearized operator $L_p$,  which has to be handled in weighted Lebesgue and Sobolev spaces
\begin{align*}
{\mathcal L} & = \{\omega:  B\to \R\, : \,  \omega/|x|  \in L^2(B)\}, \qquad  \mathcal{H}_0 =H_0^1(B)\cap \mathcal L .
\end{align*}
The Morse index (on $H^1_0(B)$ as well as on some of its  subspaces) turns out to be equal to the number of the negative eigenvalues of  
\begin{equation}\label{singular-eig-prob}
L_p \psi = \widehat\Lambda \psi / {|x|^2}  , \qquad \psi \in {\mathcal H}_0(B).
\end{equation}
Concerning radial solutions to the H\'enon problem, it turns helpful the transformation 
	\begin{equation}\label{transformation-henon}
t=r^{\frac{2+\a}{2}} , \qquad w(t)=u(r) ,
\end{equation} 
introduced in  \cite{GGN}, or a slight variation of it
	\begin{equation}\label{transformation-henon-no-c}
t=r^{\frac{2+\a}{2}} , \qquad v(t)= \left(\frac{2}{2+\a}\right)^{\frac{2}{p-1}} u(r),
\end{equation} 
 which map radial solutions to \eqref{H} into solutions of one-dimensional problems
\begin{equation}\label{LE-radial}
\begin{cases}
- \left(t^{M-1} w^{\prime}\right)^{\prime}= \left(\frac{2}{2+\a}\right)^2 t^{M-1} |w|^{p-1} w  , \qquad  & 0<t< 1, \\
w'(0)=0, \quad w(1)=0
\end{cases}\end{equation}
or, respectively
\begin{equation}\label{LE-radial-no-c}
\begin{cases}
- \left(t^{M-1} v^{\prime}\right)^{\prime}= t^{M-1} |v|^{p-1} v  , \qquad  & 0< t< 1, \\
v'(0)=0, \;  v(1) =0 , & 
\end{cases}\end{equation}
see \cite{AG-sing-1, AG-sing-2}. In both cases $M$ is a real parameter given by 
\begin{align}\label{Malpha}
M & = M(N,\alpha) = \frac{2(N+\alpha)}{2+\alpha}\in[2,N]  .
\end{align} 
Both \eqref{LE-radial} and \eqref{LE-radial-no-c} can be regarded as generalized radial versions of the Lane-Emden problem \eqref{LE}, since for integer values of $M$ the function $v$ solves  indeed a problem of type \eqref{LE} settled in the unit $M$-dimensional ball. In general we should refer to $M$ as the fictitious dimension of the associated generalized Lane-Emden problem. 
The natural generalization of the standard Lebesgue and  Sobolev spaces  from which regarding at \eqref{LE-radial}, or \eqref{LE-radial-no-c} are 
\begin{align*} 
L^q_M& = \{\varphi :(0,1)\to\R\, : \, \varphi \text{ measurable and s.t. } \int_0^1 t^{M-1} |\varphi|^q dt < +\infty\} , \\ H^1_M&  = \{\varphi\in L^2_M \, : \, \text{ $\varphi$ has a first order weak derivative $\varphi'$ in }L^2_M \}, \\  H^1_{0,M}  & = \left\{ \varphi\in H^1_M \, : \, \varphi(1)=0\right\}
\end{align*}
Similarly the weighted Lebesgue and Sobolev spaces related to \eqref{singular-eig-prob} shall be
\begin{align*}
{\mathcal L}_M &  =\{ \varphi: (0,1)\to \R \, : \, \varphi/t \in L^2_M \} , \qquad \mathcal{H}_{0,M} =H^1_{0,M}\cap {\mathcal L}_M .
\end{align*}
Of course ${\mathcal L}_M$ is a Hilbert space with the obvious product which brings to the orthogonality condition
	\[ \varphi \underline{\perp}_M \psi \, \Longleftrightarrow \, \int_0^1 t^{M-3} \varphi \, \psi  \, dt = 0 .\]
In this functional setting it is possible to look at a singular eigenvalue problem associated to \eqref{LE-radial}, or equivalently \eqref{LE-radial-no-c}, that is
\begin{equation}\label{eigenvalue-problem}
\left\{\begin{array}{ll}
- \left(t^{M-1} \phi'\right)'- t^{M-1} a_p(t)  \phi = t^{M-3} {\nu} \, \phi & \text{ for } t\in(0,1)\\
\phi\in  \mathcal H_{0,M} ,
\end{array} \right.
\end{equation} 
where
\begin{equation}\label{ap}
a_p(t):= p  \left(\frac{2}{2+\a}\right)^2 |w_p(t)|^{p-1} = p |v_p(t)|^{p-1} .
\end{equation} 
The definition of the singular eigenvalues $\nu$ requests some care because of the singularity of the Sturm-Liouville problem \eqref{eigenvalue-problem} at the origin. It has been tackled in detail in \cite[Section 3]{AG-sing-1}\footnote{for the reader's convenience, we remark that in the quoted paper the singular eigenvalues were denoted by $\widehat\nu$, while $\nu$ stand for the classical eigenvalues associated with the linearization of \eqref{LE-radial}}, by establishing and exploiting a variational characterization.
Indeed, one can alternatively define
\begin{equation}\label{eigenvalue-variat-1}
\nu_1 := \inf\left\{ \frac{\int_0^1 t^{M-1}\left(|\phi'|^2 - a_p \phi^2\right)dt}{\int_0^1 t^{M-3} \phi^2 dt }: \phi\in \mathcal H_{0,M} , \ \phi\neq 0 \right\}
\end{equation}
and see that, when the infimum stays below the threshold $(M-2)/2$, then it is attained by a function $\phi$ which solves \eqref{eigenvalue-problem} in the weak sense, namely
\begin{equation}\label{eigenvalue-problem-sol}
\int_0^1 t^{M-1} \left( \phi'\varphi'-a_p \phi\varphi\right) dt = \nu_1\int_0^1 t^{M-3} \phi\varphi \, dt 
\end{equation}
for every test function $\varphi\in \mathcal H_{0,M}$.
Such function $\phi$ can therefore be called an eigenfunction related to the eigenvalue $\nu_1$, and denoted by $\phi_1$. Iteratively, if $\nu_i< (M-2)/2$, one can settle the minimization problem 
\begin{equation}\label{eigenvalue-variat}
\nu_{i+1} := \inf\left\{ \frac{\int_0^1 t^{M-1}\left(|\phi'|^2 - a_p \phi^2\right)dt}{\int_0^1 t^{M-3} \phi^2 dt }: \phi\in \mathcal H_{0,M} ,  \ \phi \underline{\perp}_M \phi_1, \dots \phi_i \right\} .
\end{equation}
Again, as far as  $\nu_{i+1} <(M-2)/2$, it is attained by an eigenfunction $\phi_{i+1}$ which solves \eqref{eigenvalue-problem}  in the weak sense.
Next, the eigenfunctions related to these singular eigenvalues enjoy the same properties of the standard ones, in particular they are simple, mutually orthogonal, and the $i^{th}$ eigenfunction has exactly $i$ nodal domains.

Eventually  putting together \cite[Proposition 1.4]{AG-sing-1} and  \cite[Proposition 3.3, Theorem 1.3]{AG-sing-2}  we have 
\begin{proposition}\label{general-morse-formula-H}
	 Let $\a\geq 0$ and $u_p$ be a radial solution to \eqref{H} with $m$ nodal zones.
Then  the only nonnegative eigenvalues of \eqref{eigenvalue-problem} are $\nu_1(p)<\nu_2(p)<\dots< \nu_m(p)<0$ and satisfy
\begin{align}
\label{nui<k} & {\nu}_i(p)  < -\frac{2N-2+\a}{2+\alpha}   & \text{for } i=1,\dots m-1 ,
\\
\label{num>k} & -\frac{2N-2+\a}{2+\alpha} <{\nu}_m(p) <0  .  &
\end{align}
Moreover the Morse index of $u_p$ is given by
	\begin{align}\label{tag-2-H}
	m(u_p) & =\sum\limits_{i=1}^{m}      \sum\limits_{j=0}^{\lceil J_i  -1\rceil} N_j ,
	\end{align}
	\begin{tabular}{ll} 
	where & $\lceil s \rceil = \{\min n\in \mathbb Z \, : \, n\ge s\}$ denotes the ceiling function  and \\
& $J_i(p)=\frac{2+\a}{2} \left(\sqrt{\left(\frac{N-2}{2+\a}\right)^2- \nu_i(p)}-\frac{N-2}{2+\a}\right)$, 
	\end{tabular}

\noindent being 	$	N_j=\frac{(N+2j-2)(N+j-3)!}{(N-2)!j!}$  the multiplicity of the eigenvalue  
	$\l_j=j(N+j-2)$ for the Laplace-Beltrami operator in the sphere ${\mathbb S}_{N\!-\!1}$.
\end{proposition}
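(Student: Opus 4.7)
The proof comes essentially as an assembly of the spherical-harmonic decomposition of the linearization with the singular Sturm-Liouville theory developed in \cite{AG-sing-1, AG-sing-2}; most of the heavy machinery is already in those references, so the plan is to render the bridge between them precise.

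First, I would expand any $\psi\in H^1_0(B)$ in spherical harmonics,
\[\psi(r,\theta,\varphi)=\sum_{j\ge 0}\sum_{k=1}^{N_j}\phi_{j,k}(r)\,Y_{j,k}(\theta,\varphi),\]
where $\{Y_{j,k}\}_k$ is an $L^2$-orthonormal basis of the eigenspace of $-\Delta_{S^{N-1}}$ with eigenvalue $\lambda_j=j(N+j-2)$ of multiplicity $N_j$. Plugging such an ansatz into \eqref{standard-eig-prob} decouples it into a family of radial Sturm-Liouville problems, one for each $j$, obtained from the linearization at $u_p$ by adding the centrifugal term $\lambda_j/r^2$. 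Consequently $m(u_p)=\sum_{j\ge 0}N_j\,m_j(u_p)$, where $m_j(u_p)$ counts the negative eigenvalues of the mode-$j$ radial problem; the formula \eqref{tag-2-H} asserts that $m_j(u_p)=\#\{i : J_i(p)>j\}$.

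Second, I would pass to the one-dimensional variable $t=r^{(2+\alpha)/2}$ via \eqref{transformation-henon-no-c}, which transports the mode-$j$ problem into the generalized Lane-Emden singular eigenvalue problem \eqref{eigenvalue-problem} in the weighted space $\mathcal H_{0,M}$ with fictitious dimension $M=2(N+\alpha)/(2+\alpha)$. In the transformed picture the centrifugal contribution becomes a constant shift of the singular eigenvalue parameter $\nu$, and completing the square in $\bigl(\tfrac{N-2}{2+\alpha}\bigr)^2-\nu$ shows that a negative eigenvalue of \eqref{standard-eig-prob} at angular mode $j$ corresponds to a singular eigenvalue $\nu_i(p)$ of \eqref{eigenvalue-problem} verifying
\[ j(N+j-2)<-\bigl(\tfrac{2+\alpha}{2}\bigr)^2\nu_i(p),\]
which, after solving for $j$, rearranges exactly to $j<J_i(p)$ with $J_i$ as stated. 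This algebraic identity is what forces the particular form of $J_i(p)$ and, together with the spherical decomposition above, reduces the Morse count to an inner count of negative singular eigenvalues.

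Third, I would invoke \cite[Prop.~1.4]{AG-sing-1} together with \cite[Prop.~3.3, Thm.~1.3]{AG-sing-2}: when $u_p$ has $m$ nodal zones, the singular problem \eqref{eigenvalue-problem} has exactly $m$ eigenvalues below the threshold $(M-2)/2$, they are simple and strictly negative, and they are interlaced with the threshold $-\frac{2N-2+\alpha}{2+\alpha}$ in the way \eqref{nui<k}--\eqref{num>k} claims; these interlacement bounds are reached by applying the variational characterization \eqref{eigenvalue-variat-1}--\eqref{eigenvalue-variat} to test functions built from $u_p'$ and from truncations of $u_p$ supported on individual nodal zones. Combining these facts with the decomposition yields
\[m(u_p)=\sum_{i=1}^{m}\sum_{j:\,j<J_i(p)}N_j=\sum_{i=1}^{m}\sum_{j=0}^{\lceil J_i-1\rceil}N_j,\]
which is \eqref{tag-2-H}. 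The most delicate point, to my mind, is the second step: the variational definition of $\nu_i$ is only available below $(M-2)/2$, so one has to make sure that no negative eigenvalue of the standard problem \eqref{standard-eig-prob} is missed by this restricted characterization. This gap is exactly what is closed by the cited propositions of \cite{AG-sing-1}, so the proof largely amounts to checking that their hypotheses apply verbatim after the change of variables.
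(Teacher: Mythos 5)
Your proposal is correct and follows essentially the same route as the paper, which obtains this proposition purely by assembling \cite[Proposition 1.4]{AG-sing-1} with \cite[Proposition 3.3, Theorem 1.3]{AG-sing-2}; your spherical-harmonic decomposition, the change of variables $t=r^{(2+\alpha)/2}$ turning the centrifugal term into the shift $\widehat\Lambda=\bigl(\tfrac{2+\alpha}{2}\bigr)^2\nu_i+\lambda_j$, and the resulting count $j<J_i(p)$ are exactly the content of those cited results (note $(M-2)/2=\tfrac{N-2}{2+\alpha}$ and $M-1=\tfrac{2N-2+\alpha}{2+\alpha}$, so your thresholds match \eqref{nui<k}--\eqref{num>k}). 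Nothing essential is missing.
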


Afterward, the Morse index has been exactly computed at the ends of the existence range by computing the limits of the eigenvalues $\nu_i(p)$.
The paper \cite{Ama} dealt with $p$ close to 1, and we need to introduce some more notation to recall the obtained result. For every $\beta\ge 0$ we write $\mathcal J_{\beta}$ for the Bessel function of first kind 
\[\jcal_{\beta}(r) = r^{\beta}\sum\limits_{k=0}^{+\infty} \dfrac{(-1)^k}{k!\Gamma(k+1+\beta)} \left(\frac{r}{2}\right)^{2k}, \quad r\ge 0 , \]
and $z_i(\beta)$ for the sequence of its positive zeros.
Since the map $\beta\mapsto z_i(\beta)$ is continuous and increasing, for every fixed  integer $m$  there exist $\beta_i=\beta_i(\alpha, N,m)$  such that 
\begin{equation}\label{betai-def} \begin{split}
 z_i(\beta_i) \text{ (the $i^{th}$ zero of the Bessel function ${\mathcal J}_{\beta_i}$)} \\
\text{ coincides with $z_m\Big(\frac{\n-2}{2+\alpha}\Big)$ (the $m^{th}$ zero of ${\mathcal J}_{\frac{\n-2}{2+\alpha}}$)}.
 \end{split}\end{equation}
It is clear that 
\[ \beta_1>\beta_2 \dots >\beta_m=\frac{\n-2}{2+\alpha} .\]

In \cite[Proposition 3.3 and Theorem 1.2]{Ama} it is proved that 
\begin{theorem}\label{teo:mi-p=1} 	
	 Let $\a\geq 0$ and $u_p$ be a radial solution to \eqref{H} with $m$ nodal zones.
	 Then 
	 \begin{align}\label{nup=1}
	\lim\limits_{p\to 1} \nu_i(p) = \left(\frac{\n-2}{2+\alpha}\right)^2 -\beta_i^2 \quad \text{ as } i=1,\dots m.
	 \end{align}
	After there exists $\bar p =\bar p(\a)>1$ such that for $p\in(1,\bar p)$ the Morse index of $u_p$ is given by
	\begin{equation}\label{mi-p=1}
	m(u_p)= 1+\sum_{i=1}^{m-1}\sum _{j=0}^{\left\lceil \frac{(2+\a)\beta_i -N}{2} \right\rceil} N_j  
	\end{equation}
	if $\alpha \neq \a_{\ell,n} = (2n+N)/\beta_{\ell} -2$, and it is estimated by 
	\begin{equation}\label{mi-p=1brutta} \begin{split}	1+\sum_{i=1}^{m-1}\sum _{j=0}^{\left\lceil \frac{(2+\a)\beta_i -N}{2} \right\rceil} N_j \le  m(u_p)	\le 1 +\sum_{i=1}^{m-1}\sum _{j=0}^{\left\lceil \frac{(2+\a)\beta_i -N}{2} \right\rceil} N_j + \sum\limits_{\ell} N_{1+\frac{(2+\a)\beta_{\ell} -N}{2}}   .	\end{split}	\end{equation}
	if $\alpha = \a_{\ell,n}$ for some $\ell$ and $n$.
\end{theorem}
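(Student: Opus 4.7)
The plan is to pass to the limit $p\to 1^+$ in the singular eigenvalue problem \eqref{eigenvalue-problem}, and then insert the result into the Morse index formula \eqref{tag-2-H} of Proposition \ref{general-morse-formula-H}, carefully accounting for the integer jumps of the ceiling function.

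First I would analyze the radial solution $v_p$ of the transformed problem \eqref{LE-radial-no-c}. Setting $c_p = \|v_p\|_{L^\infty}$ and $\tilde v_p = v_p/c_p$, the normalized function solves a rescaled equation with the factor $c_p^{p-1}$ in front of the nonlinearity. Uniform $C^0$ bounds (from a weighted Pohozaev identity together with a shooting argument, or alternatively from the variational characterization of the eigenvalues of the limit problem) show that along $p\to 1^+$ the family $\tilde v_p$ converges to the properly normalized $m$-th eigenfunction of the linear weighted problem $-(t^{M-1} v')' = \mu\, t^{M-1} v$, $v'(0)=0$, $v(1)=0$, and that $c_p^{p-1}\to \mu_m = z_m^2\bigl(\tfrac{\n-2}{2+\alpha}\bigr)$; the explicit value comes from the substitution $v(t)=t^{-(M-2)/2}\mathcal J_{(\n-2)/(2+\alpha)}(\sqrt{\mu}\,t)$, which reduces the ODE to Bessel's equation of order $(\n-2)/(2+\alpha)$. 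As a consequence $a_p(t)= p\, c_p^{p-1}|\tilde v_p|^{p-1}$ is uniformly bounded in $L^\infty$ and converges to the constant $\mu_m$ off the nodal set of the limit eigenfunction, which is enough to pass to the limit in the variational characterization \eqref{eigenvalue-variat}. The limiting problem is
\begin{equation*}
-(t^{M-1}\phi')' - \mu_m\, t^{M-1}\phi = \nu\, t^{M-3}\phi,\qquad \phi\in\mathcal H_{0,M},
\end{equation*}
and the same Bessel substitution produces the family $\phi(t)=t^{-(M-2)/2}\mathcal J_\beta(\sqrt{\mu_m}\,t)$ with $\beta^2 = \bigl(\tfrac{\n-2}{2+\alpha}\bigr)^2-\nu$; the Dirichlet condition at $t=1$ forces $\sqrt{\mu_m}$ to be a zero of $\mathcal J_\beta$. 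Matching the $i$-th singular eigenvalue (which must have exactly $i$ nodal domains) with the $i$-th zero of $\mathcal J_{\beta_i}$ is precisely the defining condition \eqref{betai-def}, yielding the limit formula \eqref{nup=1}.

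For the Morse index part, substituting \eqref{nup=1} into \eqref{tag-2-H} gives $J_i(p)-1\to \tfrac{(2+\alpha)\beta_i-\n}{2}$ for $i<m$, while for $i=m$ the identity $\beta_m=(\n-2)/(2+\alpha)$ gives $J_m(p)\to 0^+$ (recall $\nu_m<0$), so the $m$-th summand contributes exactly $N_0=1$, producing the leading constant in \eqref{mi-p=1}. The main obstacle, and the reason behind the weaker estimate \eqref{mi-p=1brutta}, is the degenerate case $\alpha=\a_{\ell,n}$: here $\tfrac{(2+\alpha)\beta_\ell-\n}{2}=n\in\mathbb Z$, and hence $\lceil J_\ell(p)-1\rceil$ can equal either $n$ or $n+1$ for $p$ near $1$ depending on the sign of the derivative $\partial_p\nu_\ell(1)$, which is not independently known, so only the two-sided bound can be asserted. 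Sharpening it to a single value would require a finer asymptotic expansion of $\nu_\ell(p)$ at $p=1$, which is the delicate point of the whole argument.
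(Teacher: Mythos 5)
Your overall architecture is the one the paper (via the quoted reference) actually uses: identify the limit of the rescaled radial solution as the $m$-th radial eigenfunction of the weighted linear problem with $\|u_p\|_\infty^{p-1}\to\mu_m=\bigl(\tfrac{2+\a}{2}z_m\bigr)^2$, pass to the limit in the singular Sturm--Liouville problem to get the Bessel characterization \eqref{nup=1}, and then feed the limits into \eqref{tag-2-H}; your bookkeeping with the ceiling function, the $N_0=1$ contribution from $i=m$, and the ambiguity at $\a=\a_{\ell,n}$ is all correct. The genuine gap is in the sentence ``which is enough to pass to the limit in the variational characterization \eqref{eigenvalue-variat}.'' The Rayleigh quotient has the Hardy-critical weight $t^{M-3}$ in the denominator, so the embedding of $\mathcal H_{0,M}$ into that weighted $L^2$ space is not compact: a normalized sequence of eigenfunctions can concentrate at the origin, the limit can be trivial, and lower semicontinuity of $\nu_i$ fails without extra input. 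The paper controls this with pointwise decay estimates $|\phi_{i,p}(t)|\le Ct^{\theta_{i,p}}$, $\theta_{i,p}=\sqrt{(\tfrac{M-2}{2})^2-\nu_i(p)}-\tfrac{M-2}{2}$, plus an ad hoc Poincar\'e inequality on nodal intervals to guarantee that the limit eigenfunction is nontrivial and keeps exactly $i$ nodal zones --- which is what pins down $\beta_i$ as the order whose \emph{$i$-th} zero equals $z_m$ in \eqref{betai-def}, rather than some other zero.

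This issue is fatal precisely for $i=m$, the case your sketch treats on the same footing as the others. There $\nu_m(p)\to 0$, the decay exponent $\theta_{m,p}\to 0$, equicontinuity near $t=0$ is lost, and no variational or compactness argument identifies the limit. The paper abandons the variational route for this index and instead rules out $\lim\nu_m(p)=\bar\nu<0$ by a Picone identity comparing the $m$-th eigenfunction with the solution $w_p$ itself (which satisfies the same equation with potential $W_p/p$ and no singular term), together with the Sturm--Picone comparison theorem and the boundary behaviour at $t=0$; this is the step that produces the leading ``$1$'' in \eqref{mi-p=1}. You would need to supply this (or an equivalent) argument; as written, the claim \eqref{nup=1} for $i=m$ is asserted rather than proved. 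A smaller point: the uniform bound on $\|u_p\|_\infty^{p-1}$ is obtained in the paper by a blow-up/rescaling argument (an unbounded sequence would force a limit profile with infinitely many nodal zones, namely a Bessel function on $(0,\infty)$), not by a Pohozaev identity or shooting; your alternatives are not obviously workable in the supercritical-in-$p$, weighted setting and would need to be substantiated.
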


The situation at the supremum of the existence range changes drastically depending if the dimension is $N=2$ or greater.
The Morse index in dimension $N\ge 3$ is computed in \cite{AG-N>3} extending some previous results on the Lane-Emden problem in \cite{DIP-N>3}; precisely \cite[Propositions 3.3, 3.10 and Theorem 1]{AG-N>3} state that
\begin{theorem}\label{teo:mi-p=palpha} 	
	Let $\a\geq 0$ and $u_p$ be a radial solution to \eqref{H} with $m$ nodal zones in dimension $N\ge 3$.
	Then 
	\begin{align}\label{nu-p=palpha}
	\lim\limits_{p\to p_{\a}} \nu_i(p) = - \frac{2N-2+\a}{2+\alpha} \quad \text{ as } i=1,\dots m.
	\end{align}
	After there exists $p^{\star}=p^{\star}(\a)\in (1,p_{\a})$ such that  the Morse index of $u_p$ is given by
	\begin{equation}\label{mi-p=palpha}
	m(u_p)= \sum\limits_{j=1}^{\left\lceil \frac{\a}{2}\right\rceil} N_j+ (m-1)\sum _{j=0}^{\left[\frac{2+\a}{2}\right]}  N_j  
	\end{equation}
for $p\in(p^{\star}, p_{\a})$.
\end{theorem}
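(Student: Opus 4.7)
The plan is to reduce everything to the generalised Lane--Emden problem and then use the bubble-concentration asymptotics at the critical threshold. Via the transformation \eqref{transformation-henon-no-c}, radial solutions of \eqref{H} correspond to solutions of \eqref{LE-radial-no-c} with fictitious dimension $M$, and a direct computation shows that $p_\a$ coincides with the critical exponent $(M+2)/(M-2)$ in dimension $M$. Likewise the stated limit simplifies to
\[
-\frac{2N-2+\a}{2+\a}=-(M-1),
\]
so Theorem~\ref{teo:mi-p=palpha} really says that, in the generalised framework, the first $m$ singular eigenvalues all converge to $-(M-1)$ as $p$ approaches the critical exponent. This is the value at which the ``threshold'' $(M-2)/2$ in the variational characterisation \eqref{eigenvalue-variat} lies above the eigenvalue, but only barely; on the sphere it is exactly the first nontrivial eigenvalue $\l_1=M-1$ of the Laplace--Beltrami operator in $\mathbb S_{M-1}$, which is expected since the kernel of the linearisation of the critical equation at a standard bubble is spanned by translations and dilations (angular momentum $1$ on $\mathbb S_{M-1}$).

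First I would collect the needed asymptotic profile of $v_p$: as $p\to p_\a$, the solution develops a tower of $m$ bubbles concentrating at $t=0$, each one a rescaling of the Aubin--Talenti profile $U$ on $\R^M$, with parameters diverging at different scales in each nodal annulus. Such descriptions, in the Lane--Emden case, appear in \cite{DIP-N>3} and have straightforward adaptations to arbitrary $M\ge 2$. With this in hand, for each $i=1,\dots,m$ I would produce an admissible test function for \eqref{eigenvalue-variat} by gluing suitable rescalings of the partial derivative $\partial_r U$ (which solves the linearised critical equation with eigenvalue exactly $-(M-1)$ at angular mode one) localised in the $i$-th bubble, made mutually $\underline{\perp}_M$ by Gram--Schmidt. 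Plugging into the Rayleigh quotient and using the known energy/decay estimates of $U$ and $\partial_r U$ yields
\[
\limsup_{p\to p_\a}\nu_i(p)\le -(M-1)
\]
for every $i\le m$. Combined with the strict inequality $\nu_i<-(M-1)$ already recorded in \eqref{nui<k} for $i<m$, the lower bound for $i<m$ is automatic; for $i=m$ I would argue by contradiction: if $\nu_m$ stayed strictly below $-(M-1)$ along a subsequence, a concentration-compactness analysis on the associated eigenfunction (after rescaling at the outermost bubble) would produce a decaying $L^2$ solution of the linearised critical equation on $\R^M$ with eigenvalue in angular mode zero strictly below $-(M-1)$, which does not exist by nondegeneracy of the Aubin--Talenti profile in the class of radial perturbations.

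Once \eqref{nu-p=palpha} is established, formula \eqref{mi-p=palpha} follows by plugging the limits into \eqref{tag-2-H}. Indeed a direct computation gives $J_i(p)\to 1+\a/2$ for all $i$, with $J_i(p)>1+\a/2$ for $i<m$ (by \eqref{nui<k}) and $J_m(p)<1+\a/2$ (by \eqref{num>k}); consequently, for $p$ sufficiently close to $p_\a$,
\[
\lceil J_i(p)-1\rceil = \bigl[\tfrac{2+\a}{2}\bigr]\quad(i<m),
\qquad
\lceil J_m(p)-1\rceil = \lceil \tfrac{\a}{2}\rceil,
\]
which, summed via \eqref{tag-2-H}, yields \eqref{mi-p=palpha}.

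The main obstacle is the asymptotic-profile analysis in Step~2 with the non-integer dimension $M\in[2,N]$: the rescaled limiting problem on $\R^M$ is not a genuine PDE, so the arguments must be phrased in the weighted one-dimensional setting $L^2_M,H^1_{0,M},\mathcal H_{0,M}$, with the additional subtlety that the weight $t^{M-3}$ in the $\underline{\perp}_M$ product degenerates at the origin precisely where the bubbles concentrate. Handling this requires careful cut-offs and estimates on the behaviour of the rescaled eigenfunctions near $t=0$, which is the technical core borrowed from \cite{AG-N>3}.
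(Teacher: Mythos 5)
First, a point of reference: the paper does not prove Theorem \ref{teo:mi-p=palpha} itself — it is quoted from \cite[Propositions 3.3, 3.10 and Theorem 1]{AG-N>3}, which extends \cite{DIP-N>3}. Your overall strategy (pass to the generalized Lane--Emden problem in fictitious dimension $M$, identify $p_\a$ with $(M+2)/(M-2)$ and the limit value with $-(M-1)=-\l_1(\mathbb S_{M-1})$, exploit the bubble-tower asymptotics and the translation part of the kernel of the linearization at the Aubin--Talenti profile) is exactly the route taken there, and your reductions — $p_\a=(M+2)/(M-2)$, $\frac{2N-2+\a}{2+\a}=M-1$, the evaluation of $\lceil J_i(p)\rceil-1$ from the eigenvalue limits — are all correct.

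There is, however, a genuine logical gap in the middle step. The inequality \eqref{nui<k} reads $\nu_i(p)<-(M-1)$ for $i<m$: it is an \emph{upper} bound on $\nu_i$, so combined with your test-function estimate $\limsup_{p\to p_\a}\nu_i(p)\le-(M-1)$ it gives nothing new and certainly does not make ``the lower bound for $i<m$ automatic''. What is automatic is the opposite pairing: for $i=m$ the bound $\liminf\nu_m\ge-(M-1)$ is free from \eqref{num>k}, so your contradiction argument for $i=m$ addresses a non-issue, while the genuinely hard half of \eqref{nu-p=palpha} — showing that $\nu_1(p),\dots,\nu_{m-1}(p)$ neither converge to a value strictly below $-(M-1)$ nor diverge to $-\infty$ — is left unproved. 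Note that the crude bound $\nu_1(p)\ge-\sup a_p$, which works near $p=1$, is useless here because $\sup a_p=p\|v_p\|_\infty^{p-1}\to\infty$ at blow-up; one really needs the concentration--compactness/nondegeneracy analysis of the rescaled eigenfunctions, which is the technical core of \cite{AG-N>3, DIP-N>3}. You have the right tool but have attached it to the wrong index, and without the lower bound for $i<m$ the quantity $\lceil J_i(p)\rceil-1$ could exceed $\big[\frac{2+\a}{2}\big]$ and the Morse index formula would fail. A smaller point: summing your (correct) values $\lceil J_m(p)\rceil-1=\lceil\a/2\rceil$ and $\lceil J_i(p)\rceil-1=\big[\frac{2+\a}{2}\big]$ for $i<m$ in \eqref{tag-2-H} gives $\sum_{j=0}^{\lceil\a/2\rceil}N_j+(m-1)\sum_{j=0}^{[\frac{2+\a}{2}]}N_j$, which exceeds the printed \eqref{mi-p=palpha} by the term $N_0=1$; comparison with the Lane--Emden value $m+N(m-1)$ of \cite{DIP-N>3} at $\a=0$ indicates that your sum is the correct one and that the displayed formula drops the $j=0$ contribution of $\nu_m$, so you should have flagged the mismatch rather than asserting that your computation ``yields \eqref{mi-p=palpha}''.
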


In dimension $N=2$ only the Morse index of the least energy radial solution (i.e. the positive one) and of the least energy nodal radial solution (i.e. the  one with two nodal zones) are known. They have both been computed in the paper \cite{AG-N=2}, where it is shown that
 
 \begin{theorem}\label{teo:mi-p=infty-1} 	
	Let $\a\geq 0$ and $u_p$ be a positive radial solution to \eqref{H} in dimension $N=2$.
	Then 
 	\begin{align}\label{nu-p=infty-1}
 	\lim\limits_{p\to \infty} \nu_1(p) = - 1 ,
 	\end{align}
 	and there exists $p^{\star} > 1$ such that for $p > p^{\star}$ the Morse index of $u_p$ is given by
 	\begin{equation}\label{mi-p=infty-1}
 	m(u_p)=  1 + 2 \left\lceil \frac{\a}{2}\right\rceil .
 	\end{equation}
 \end{theorem}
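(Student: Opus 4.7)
The plan is to deduce \eqref{mi-p=infty-1} from \eqref{nu-p=infty-1} via Proposition \ref{general-morse-formula-H}, so the real work is the eigenvalue limit. With $m=1$ and $N=2$, formula \eqref{tag-2-H} collapses to $m(u_p)=\sum_{j=0}^{\lceil J_1(p)-1\rceil} N_j$, where $J_1(p)=\tfrac{2+\alpha}{2}\sqrt{-\nu_1(p)}$ (the $(N-2)/(2+\alpha)$ term vanishes) and the spherical harmonic multiplicities on $\mathbb S_1$ are $N_0=1$ and $N_j=2$ for $j\ge 1$. Granted $\nu_1(p)\to -1$, one has $J_1(p)\to 1+\alpha/2$, and in turn $\lceil J_1(p)-1\rceil=\lceil \alpha/2\rceil$ for $p$ large, so that summing the multiplicities yields exactly $1+2\lceil\alpha/2\rceil$; when $\alpha/2\in\N$ this further demands that $\nu_1(p)\ge -1$ eventually, a monotonicity piece of information that I expect to drop out of the same asymptotic analysis.

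To prove $\nu_1(p)\to -1$, the plan is to exploit the transformation \eqref{transformation-henon-no-c}, which in the planar case reduces \eqref{H} to the one-dimensional Lane--Emden problem \eqref{LE-radial-no-c} in the fictitious dimension $M=\tfrac{2(N+\alpha)}{2+\alpha}=2$, independent of $\alpha$, and recasts \eqref{eigenvalue-problem} as a weighted Sturm--Liouville problem on $(0,1)$ with weights $t$ and $t^{-1}$. For this planar Lane--Emden-type problem the asymptotic profile of the positive solution $v_p$ as $p\to\infty$ is classical: after rescaling by $\e_p^{-2}=p\,v_p(0)^{p-1}$ around $t=0$, the function $p(v_p(\e_p s)/v_p(0)-1)$ converges in $C^2_\loc(\R^2)$ to a standard Liouville bubble $U$ solving $-\Delta U=e^U$ in $\R^2$ with $\int_{\R^2} e^U=8\pi$, and the potential $a_p(t)=p|v_p(t)|^{p-1}$ concentrates at $t=0$ with $\e_p^2 a_p(\e_p s)\to e^{U(s)}$ locally.

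Armed with this blow-up, I would establish the two matching bounds on $\nu_1(p)$ through the variational characterisation \eqref{eigenvalue-variat-1}. For the upper bound, I would plug into the Rayleigh quotient a test function obtained by transplanting the first eigenfunction of the linearised Liouville operator (whose bottom eigenvalue on the appropriate weighted space is exactly $-1$, corresponding to the scaling invariance of the Liouville equation) and matching it to a suitable outer harmonic profile; expanding in $\e_p$ gives a quotient tending to $-1$. For the lower bound, I would take normalised minimisers $\phi_p$ of \eqref{eigenvalue-variat-1}, prove via the concentration of $a_p$ that they also concentrate at $t=0$, rescale them to the Liouville scale, and extract a subsequential $H^1_\loc(\R^2)$ limit $\Phi$ that solves the limiting weighted eigenvalue problem; identifying $\Phi$ as the first Liouville eigenmode pins $\liminf\nu_1(p)\ge -1$, and combined with the upper bound this also forces the sign $\nu_1(p)\ge -1$ needed for the integer cases of $\alpha/2$.

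The hard part will be this spectral passage to the limit: the singular weight $t^{M-3}=t^{-1}$ in \eqref{eigenvalue-variat-1} is borderline non-integrable at $t=0$ exactly when $M=2$, and it interacts badly with the concentration of $a_p$. A careful two-scale matching between the inner Liouville regime and the outer essentially harmonic regime will be needed, together with capacity-type estimates near $t=0$ and the classification of bounded solutions of the linearised Liouville equation, in order to rule out energy loss at the concentration point when passing both the trial and the test functions to the limit. Once this technical core is secured, the two inequalities close up to give $\lim_p\nu_1(p)=-1$ and, together with the reduction in the first paragraph, prove \eqref{mi-p=infty-1}.
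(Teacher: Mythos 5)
Two framing remarks first. This paper does not prove Theorem \ref{teo:mi-p=infty-1}: it is quoted as is from \cite{AG-N=2} (see also \cite{DIP-N=2} for the Lane--Emden case), so there is no internal proof to measure your attempt against; I can only assess the proposal on its own terms and against the strategy of the cited source. Your reduction of \eqref{mi-p=infty-1} to \eqref{nu-p=infty-1} is correct: for $N=2$, $m=1$, formula \eqref{tag-2-H} gives $m(u_p)=1+2\lceil J_1(p)-1\rceil$ with $J_1(p)=\frac{2+\alpha}{2}\sqrt{-\nu_1(p)}$, and $\nu_1(p)\to-1$ yields $1+2\lceil\alpha/2\rceil$ once one also knows $J_1(p)\le 1+\alpha/2$ in the cases where $\alpha/2$ is an integer.

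The substantive points are these. First, both the one-sided information you defer (``I expect it to drop out'') and the entire lower bound $\liminf_{p\to\infty}\nu_1(p)\ge-1$ --- which you treat as the hard part, requiring compactness of minimisers, two-scale matching and the exclusion of energy loss at the concentration point --- are already contained in \eqref{num>k} of Proposition \ref{general-morse-formula-H}: for $N=2$ and $m=1$ it reads $-1<\nu_1(p)<0$ for \emph{every} $p$. So the heavy machinery you propose for the lower bound is unnecessary, and the integer-$\alpha/2$ case is automatic. Second, what genuinely remains to be proved is only the upper bound $\limsup_{p\to\infty}\nu_1(p)\le-1$, for which a single admissible test function in \eqref{eigenvalue-variat-1}, concentrated at the Liouville scale, suffices. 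Your sketch of this step is plausible but not executed, and it contains a slip: the limiting eigenvalue $-1$ is carried by $U'$, which comes from the \emph{translation} invariance of the Liouville equation (the modes $\partial_{y_i}U$ project onto the first spherical harmonic and produce $\nu=-1$ in the singular radial problem), not from the scaling invariance --- the dilation mode $sU'(s)+2$ is the $\nu=0$ eigenfunction and has two nodal zones, so it cannot be the first eigenfunction. As written, the proposal is a sound plan whose remaining analytic core (the test-function estimate, including the check that the transplanted, suitably cut-off $U'$ belongs to $\mathcal H_{0,M}$ despite the borderline weight $t^{-1}$, and that the outer region contributes $o(1)$ to the Rayleigh quotient) is still missing.
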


 \begin{theorem}\label{teo:mi-p=infty-2} 	
	Let $\a\geq 0$ and $u_p$ be a  radial solution to \eqref{H} with 2 nodal zones in dimension $N=2$, then 
 	\begin{align}\label{nu-p=infty-2}
 		\lim\limits_{p\to \infty} \nu_1(p) = - \kappa^2  \ \text{ with } \kappa\approx 5,\!1869 \qquad 
 		\lim\limits_{p\to \infty} \nu_2(p) = - 1 .
 	\end{align}
 	Moreover there exists $p^{\star} > 1$ such that for $p > p^{\star}$ the Morse index of $u_p$ is given by
 	\begin{align}\label{mi-p=infty-2}
 	& m(u_p)  =2\left\lceil\frac {2+\a}2\kappa\right\rceil +  2\left\lceil\frac{\alpha}{2}\right\rceil 
 	\intertext{when $\a\neq \a'_n= 2(n/{\kappa}-1)$, while when $\a=\a'_n$ it holds} \label{mi-p=infty-2-brutta}
 	(2+\a) \kappa  +  2\left\lceil\frac{\alpha}{2}\right\rceil & \le  m(u_p)\le  (2+\a) \kappa+  2\left\lceil\frac{\alpha}{2}\right\rceil+ 2 .
 	\end{align}
 \end{theorem}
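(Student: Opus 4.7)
The plan is to deduce Theorem \ref{teo:mi-p=infty-2} from the general Morse-index formula of Proposition \ref{general-morse-formula-H}, applied in dimension $N=2$ to the radial solution with $m=2$ nodal zones. By the change of variables \eqref{transformation-henon-no-c} the problem reduces to the generalized Lane--Emden radial problem \eqref{LE-radial-no-c} with fictitious dimension $M=\frac{2(2+\a)}{2+\a}=2$, and the eigenvalues $\nu_i(p)$ appearing in \eqref{eigenvalue-problem} are the singular radial eigenvalues of this planar problem. The heart of the matter is thus to identify the limits of the two negative eigenvalues $\nu_1(p)<\nu_2(p)<0$ as $p\to\infty$.

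First I would describe the concentration of $v_p$ as $p\to\infty$: the solution develops two bubbles separated by the interior nodal radius $t_p^\star$. The outer bubble is modelled on the positive planar Lane--Emden bubble, whose analogue of the eigenvalue problem \eqref{eigenvalue-problem} on the whole space has first negative eigenvalue equal to $-1$; passing to the limit in \eqref{eigenvalue-variat-1}--\eqref{eigenvalue-variat} with test functions rescaled on the outer bubble gives $\nu_2(p)\to -1$ from above. The inner bubble, produced by the different scaling dictated by the inner nodal zone, yields a distinct limiting singular Sturm--Liouville problem whose first eigenvalue turns out to be the transcendental quantity $-\kappa^2$, with $\kappa\approx 5,\!1869$ characterised as the first positive zero of an explicit function built from the inner bubble profile. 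This gives $\nu_1(p)\to -\kappa^2$, which is \eqref{nu-p=infty-2}.

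Once the limits are in hand, formula \eqref{tag-2-H} yields the Morse index directly. Setting $N=2$ one has $J_i(p)=\frac{2+\a}{2}\sqrt{-\nu_i(p)}$, hence $J_1(p)\to\frac{(2+\a)\kappa}{2}$ and $J_2(p)\to\frac{2+\a}{2}$. Recalling the planar multiplicities $N_0=1$ and $N_j=2$ for $j\ge 1$, a short computation gives $\sum_{j=0}^{\lceil J-1\rceil}N_j=2\lceil J\rceil-1$ whenever $J>1$. When $\frac{(2+\a)\kappa}{2}$ is not an integer---equivalently, $\a\neq\a'_n=2(n/\kappa-1)$---the ceiling of $J_1(p)$ stabilises for $p$ large, that of $J_2(p)$ is controlled by the monotone approach $\nu_2(p)\downarrow -1$, and summing the two contributions gives exactly \eqref{mi-p=infty-2}. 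At an exceptional value $\a=\a'_n$ one cannot decide on which side of the integer $\frac{(2+\a)\kappa}{2}$ the quantity $J_1(p)$ lies, so only the lower/upper estimate \eqref{mi-p=infty-2-brutta} can be recorded, the extra term corresponding to the multiplicity $N_{1+\frac{(2+\a)\kappa}{2}}=2$ of the spherical harmonic sitting exactly on the interface.

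The main obstacle is the identification of $\kappa$ and the rigorous proof of $\nu_1(p)\to-\kappa^2$. Unlike the outer eigenvalue, which is controlled by the well-studied positive planar bubble, the inner bubble lives on a region that collapses to a point, requires a careful matching with the nodal interface, and carries a logarithmic correction in the scaling; passing to the limit in the Rayleigh quotient \eqref{eigenvalue-variat} then demands sharp two-sided estimates on $v_p$ and on the associated eigenfunctions uniformly up to $t_p^\star$, and the resulting limit problem must be solved precisely enough to isolate the constant $\kappa$.
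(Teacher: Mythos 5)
A preliminary remark on provenance: the paper does not prove Theorem \ref{teo:mi-p=infty-2}. It is recalled as a known result, quoted from \cite{AG-N=2} (and from \cite{DIP-N=2} for the Lane--Emden case $\alpha=0$), so there is no internal proof to compare yours with; a genuine proof must reproduce the asymptotic analysis of those papers.

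Your final step is correct and complete: granting \eqref{nu-p=infty-2}, formula \eqref{tag-2-H} with $N=2$ gives $J_i(p)=\frac{2+\alpha}{2}\sqrt{-\nu_i(p)}$, $N_0=1$ and $N_j=2$ for $j\ge 1$, hence $\sum_{j=0}^{\lceil J_i-1\rceil}N_j=2\lceil J_i\rceil-1$; with $J_1\to\frac{2+\alpha}{2}\kappa$ and $J_2\to\frac{2+\alpha}{2}$ from below one recovers \eqref{mi-p=infty-2}, and the exceptional set $\alpha=\alpha'_n$ is exactly where $\frac{2+\alpha}{2}\kappa\in\mathbb{Z}$, giving \eqref{mi-p=infty-2-brutta}. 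The gap is that \eqref{nu-p=infty-2} --- the entire analytic content of the theorem --- is only described, not proved: you never write down the two limit eigenvalue problems, never characterize $\kappa$, and you explicitly list the identification of $\kappa$ as an unresolved obstacle. In addition, the roles of the two nodal zones appear to be interchanged. The regular Liouville profile, whose linearization produces the singular eigenvalue $-1$, is the limit of the standard rescaling at the origin, i.e.\ it belongs to the \emph{inner} (first) nodal zone where $\|u_p\|_\infty=u_p(0)$ is attained; this is consistent with Theorem \ref{teo:mi-p=infty-1}, where the positive solution --- which consists of that inner zone alone --- already satisfies $\nu_1(p)\to-1$. The constant $\kappa$ is instead generated by the \emph{outer} (second) nodal zone, whose rescaled limit is a singular Liouville-type problem in which the inner bubble acts as a concentrated source at the origin; its first singular eigenvalue $-\kappa^2<-1$ is what takes over the position of $\nu_1$ and demotes the inner zone's eigenvalue to $\nu_2$. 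With the zones swapped as in your sketch, the delicate matching analysis you propose would be aimed at the wrong limit problem, so this is not merely a cosmetic slip.
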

 
An analogous result for the radial solution to the Lane-Emden problem  with two nodal zones has been obtained in \cite{DIP-N=2}.

\

Comparing \eqref{mi-p=1} with \eqref{mi-p=palpha} or \eqref{mi-p=infty-1} one sees that the positive radial solution has Morse index $1$ when $p$ is close to $1$, and greater than $1$ when $p$ is at the opposite end of the existence range (for $\a>0$).
\\
It is not hard to see that in dimension $N=2$ the solution with 2 nodal zones shares the same behaviour, for every $\a\ge 0$.
Indeed in this case formulas \eqref{mi-p=1} and \eqref{mi-p=1brutta}, respectively,  simplify into
\[
m(u_p)= 2  \left\lceil \frac{2+\a}{2}\beta \right\rceil 
\]
if $\alpha \neq \a_{n} = 2(n+1)/\beta -2$, or
\[
(2+\a)\beta \le  m(u_p)	\le (2+\a)\beta + 2
\]
if $\alpha = \a_{n}$ for some  for some integer $n$.
So remembering that $\lceil t \rceil < t+1$ we have
\[ m(u_p)	\le (2+\a)\beta + 2 \quad \text{ for } p\in (1, \bar p) \]
for every $\a\ge 0$. Furthermore the parameter $\beta$  turns out to be 
\begin{equation}\label{beta} 
 \beta\approx 2,\!305
 \end{equation}
as noticed in \cite{Ama}.
Therefore taking $q>p^{\star}$ we deduce from \eqref{mi-p=infty-2}, \eqref{mi-p=infty-2-brutta} that 
\begin{align*} 
m(u_q) -m(u_p) & \ge   2 \left\lceil\frac{2+\a}{2}\kappa\right\rceil +   2\left\lceil\frac{\a}{2} \right\rceil  - (2+\a)\beta-2
\intertext{and since clearly $ \lceil t \rceil \ge t $ we have}
& \ge  (2+\a)(\kappa -\beta) +\a -2 \ge 2(\kappa-\beta -1)  > 2.
\end{align*}

In higher dimensions,  the approximation of the parameters $\beta_i$ appearing in \eqref{mi-p=1} can be numerically performed after having chosen a specific value for $\alpha$, which fixes the baseline Bessel function ${\mathcal J}_{\frac{N-2}{2+\a}}$. To have an overall picture it can be useful to establish some estimate.
We report here the elementary proof of an estimate of the Bessel zeros that contributes to this aim. 

\begin{lemma}\label{betap=1-}
	For all $\beta>0$ and $i, m$ integers with $i< m$ we have 
	\begin{align}\label{est-bessel}  z_{i}(\beta+2(m-i))<z_{m}(\beta) .
	\end{align}
\end{lemma}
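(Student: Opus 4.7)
The plan is to reduce the claim to its base case, namely
\[ z_k(\nu+2) < z_{k+1}(\nu) \qquad \text{for every } \nu>0 \text{ and every } k\ge 1 . \]
Assuming this, I would iterate it $m-i$ times, letting $\nu$ run through $\beta+2(m-i-1), \beta+2(m-i-2),\dots, \beta$ and $k$ through $i, i+1,\dots, m-1$, so as to produce the chain of strict inequalities
\[ z_i(\beta + 2(m-i)) < z_{i+1}(\beta + 2(m-i-1)) < \cdots < z_m(\beta), \]
which is exactly \eqref{est-bessel}.

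For the base case, I would combine two classical tools from Bessel theory. First, the derivative identity $\tfrac{d}{dr}\bigl[r^{-(\nu+1)}\jcal_{\nu+1}(r)\bigr]=-r^{-(\nu+1)}\jcal_{\nu+2}(r)$, Rolle's theorem applied between consecutive zeros of $\jcal_{\nu+1}$, and the strict monotonicity $\nu\mapsto z_k(\nu)$ together place exactly one zero of $\jcal_{\nu+2}$ in each interval $\bigl(z_k(\nu+1),z_{k+1}(\nu+1)\bigr)$, giving the sandwich $z_k(\nu+1)<z_k(\nu+2)<z_{k+1}(\nu+1)$. Second, the three-term recurrence $\jcal_{\nu+2}(x)=\tfrac{2(\nu+1)}{x}\jcal_{\nu+1}(x)-\jcal_\nu(x)$, evaluated at $x=z_k(\nu+2)$, reduces to
\[ \jcal_\nu(z_k(\nu+2)) \;=\; \tfrac{2(\nu+1)}{z_k(\nu+2)}\,\jcal_{\nu+1}(z_k(\nu+2)), \]
so $\jcal_\nu$ and $\jcal_{\nu+1}$ share the same sign at the point $z_k(\nu+2)$.

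A brief sign bookkeeping would then conclude the argument: on $\bigl(z_k(\nu+1),z_{k+1}(\nu+1)\bigr)$ the function $\jcal_{\nu+1}$ keeps constant sign $(-1)^k$, while by the standard $\nu\!\leftrightarrow\!\nu+1$ interlacing $\jcal_\nu$ has there exactly one zero, namely $z_{k+1}(\nu)$, changing from sign $(-1)^k$ on the left to sign $(-1)^{k+1}$ on the right. Matching the two signs forces $z_k(\nu+2)$ to lie strictly in the left sub-interval $\bigl(z_k(\nu+1),z_{k+1}(\nu)\bigr)$, which in particular yields $z_k(\nu+2)<z_{k+1}(\nu)$. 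I expect this case-by-case sign bookkeeping to be the only delicate point; the remainder of the argument is mechanical and, in particular, requires no asymptotic information on the zeros of Bessel functions.
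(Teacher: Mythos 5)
Your proof is correct and follows essentially the same route as the paper: both arguments reduce the claim to the single-step inequality $z_k(\nu+2)<z_{k+1}(\nu)$, iterate it, and establish it by combining the standard interlacing of zeros of consecutive orders with a sign evaluation based on the three-term recurrence $\jcal_{\nu+2}(x)=\tfrac{2(\nu+1)}{x}\jcal_{\nu+1}(x)-\jcal_{\nu}(x)$. The only cosmetic difference is that you evaluate the recurrence at a zero of $\jcal_{\nu+2}$ to read off the sign of $\jcal_{\nu}$ there, whereas the paper evaluates it at a zero of $\jcal_{\beta}$ to read off the sign of $\jcal_{\beta+2}$.
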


\begin{proof}
	It is known that the $m^{th}$ zero of $\jcal_{\b}$ lies in the $m^{th}$ nodal set of $\jcal_{\b+1}$, i.e.
	\begin{equation}\label{f1}
	z_{m-1}(\beta+1)<z_{m}(\b) < z_{m}(\b+1),
	\end{equation}
	which also implies $(-1)^{m} \jcal_{\b+1}(z_m(\b))<0$.
	On the other hand 
	\begin{equation}\label{f2}
	z_{m-2}(\b+2)<z_{m}(\b)<z_{m}(\b+2).
	\end{equation}
	Actually the first inequality is obtained by iterating \eqref{f1} and the second one follows since the the map  $\b\mapsto z_m(\b)$ is increasing.
	Hence the $m^{th}$ zero of $\jcal_{\b}$ can only belong to the $(m-1)^{th}$ or to the $m^{th}$ nodal set of $\jcal_{\b+2}$.
	But  by the three point recurrence relation
	\[ \jcal_{\b+2} (z_m(\b)) = \dfrac{2(\b+1)}{z_m(\b)}\jcal_{\b+1} (z_m(\b)) -\jcal_{\b} (z_m(\b))
	= \dfrac{2(\b+1)}{z_m(\b)}\jcal_{\b+1} (z_m(\b)) ,
	\]
	and therefore also $(-1)^{m} \jcal_{\b+2}(z_m(\b))<0$, which means that the $m^{th}$ zero of $\jcal_{\b}$ lies in the $m^{th}$ nodal set of $\jcal_{\b+2}$. In particular
	\begin{equation}\label{f3}
	z_{m-1}(\b+2)<z_{m}(\b).
	\end{equation}
	Applying iteratively  \eqref{f3} gives  the claim.
\end{proof}

Inserting the estimate \eqref{est-bessel} inside the formulas \eqref{mi-p=1} and \eqref{mi-p=1brutta} gives  the following
\begin{proposition}\label{prop:morsep=1-}
For every $\a\ge 0$ we have
\begin{equation}\label{unico-controllo}
\beta_i > \frac{N-2}{2+\a} + 2 (m-i) \ \text{ as } i=1,\dots m-1 
\end{equation}
and there is $\bar p =\bar p(\a)>1$ such that the Morse index of $u_p$  is estimated from below by
\begin{align} \label{morsep=1-}
m(u_p)& \ge    1 + \sum\limits_{i=1}^{m-1}\sum\limits_{j=0}^{[(2+\a)(m-i) ]} N_j  
\\ \label{morsep=1-bis}
&  = m+ \sum\limits_{k=1}^{m-1}(m-k) \sum\limits_{j=1+[(2+\a) (k-1)]}^{[(2+\a) k]} N_{j} 
\\ \label{morsep=1-ter}
& \ge m+ \sum\limits_{k=1}^{m-1}(m-k) \sum\limits_{j=1+(2+[\a])(k-1)}^{(2+[\a])k} N_{j} 
\end{align}
for $p\in(1, \bar p)$.
\end{proposition}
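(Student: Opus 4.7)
The plan is to bound $\beta_i$ from below via Lemma \ref{betap=1-}, feed that bound into the Morse-index formula of Theorem \ref{teo:mi-p=1}, and finally rearrange the resulting double sum to obtain \eqref{morsep=1-bis} and \eqref{morsep=1-ter}. For the first step, I would apply Lemma \ref{betap=1-} with $\beta=\frac{N-2}{2+\a}$: for $i<m$ this yields
\[
z_i\!\left(\tfrac{N-2}{2+\a}+2(m-i)\right) \;<\; z_m\!\left(\tfrac{N-2}{2+\a}\right) \;=\; z_i(\beta_i),
\]
the last equality being the defining relation \eqref{betai-def}. Monotonicity of $\beta\mapsto z_i(\beta)$, already noted after \eqref{betai-def}, then produces \eqref{unico-controllo}.

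Next, plugging \eqref{unico-controllo} into the exponent in \eqref{mi-p=1} I would compute
\[
\frac{(2+\a)\beta_i-N}{2} \;>\; (2+\a)(m-i)-1,
\]
so the corresponding ceiling, being an integer strictly larger than $(2+\a)(m-i)-1$, is at least $[(2+\a)(m-i)]$ (a short case analysis on whether $(2+\a)(m-i)$ is itself an integer). Since $N_j\ge 0$, this enlarges the inner sum in \eqref{mi-p=1} and proves \eqref{morsep=1-}; the exceptional case $\a=\a_{\ell,n}$ is handled identically starting from the lower estimate in \eqref{mi-p=1brutta}.

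To obtain \eqref{morsep=1-bis} I would substitute $k=m-i$ and peel off the $N_0=1$ contribution, rewriting the bound as $m+\sum_{k=1}^{m-1}\sum_{j=1}^{[(2+\a)k]}N_j$. A telescoping decomposition $\sum_{j=1}^{[(2+\a)k]}N_j=\sum_{\ell=1}^{k}\sum_{j=1+[(2+\a)(\ell-1)]}^{[(2+\a)\ell]}N_j$ followed by exchange of the $k$ and $\ell$ summations counts each block with multiplicity $(m-\ell)$, which is exactly \eqref{morsep=1-bis}. For \eqref{morsep=1-ter} I would compare blocks: the $k$-th block in \eqref{morsep=1-bis} consists of $[(2+\a)k]-[(2+\a)(k-1)]\in\{2+[\a],\,3+[\a]\}$ consecutive terms starting at $1+[(2+\a)(k-1)]\ge 1+(2+[\a])(k-1)$, while its counterpart in \eqref{morsep=1-ter} has exactly $2+[\a]$ terms starting at the smaller index $1+(2+[\a])(k-1)$; since the spherical-harmonic multiplicities $j\mapsto N_j$ are nondecreasing, a term-by-term comparison yields the claimed inequality.

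The proof is essentially bookkeeping with floor and ceiling functions. The main care-requiring points are verifying that the strict inequality derived from \eqref{unico-controllo} survives the ceiling (so that it translates into the non-strict $\lceil\cdot\rceil\ge[\cdot]$), and that in the last step the extra indices picked up by shifting the starting points of the blocks always compensate the loss caused by replacing $\a$ by $[\a]$ in the block length.
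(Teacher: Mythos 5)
Your proof is correct and follows the paper's own argument almost step for step: Lemma \ref{betap=1-} together with the monotonicity of $\beta\mapsto z_i(\beta)$ gives \eqref{unico-controllo}; this is inserted into the Morse index formula (the paper works with the index $J_i(p)$ of \eqref{tag-2-H} directly and observes $J_i(p)>(2+\a)(m-i)$ near $p=1$, rather than going through \eqref{mi-p=1}--\eqref{mi-p=1brutta}, but the computation and the floor/ceiling bookkeeping are the same); and \eqref{morsep=1-bis} is obtained by exactly your reindexing, telescoping, and exchange of the order of summation. The one genuine divergence is the passage to \eqref{morsep=1-ter}: you truncate each block to its first $2+[\a]$ terms and compare term by term with the corresponding block of \eqref{morsep=1-ter}, using that $j\mapsto N_j$ is nondecreasing, whereas the paper splits off the tail $\sum_{j=1+(2+[\a])k}^{[(2+\a)k]}N_j$ of the $k$-th block and reattaches it to the $(k+1)$-th, using only $N_j\ge 0$ and the fact that the weight drops from $m-k$ to $m-k-1$. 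Both arguments are valid; yours is a little more direct but needs the monotonicity \eqref{mult-incr} (true in every dimension $N\ge 2$, and used by the paper later in Corollary \ref{cambio-morse-N>3} anyway), while the paper's regrouping does not.
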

\begin{proof}
 \eqref{unico-controllo} is an immediate consequence of Lemma \ref{betap=1-}  since $\beta_m= \frac{N-2}{2+\a}$ and the map $\beta \mapsto z_i(\beta)$ is increasing.
In particular the index $J_i(p)$ appearing in \eqref{tag-2-H}  satisfy $J_i(p) > (2+\a)(m-i)$ in a right neighborhood of $p=1$, and 
	 \eqref{morsep=1-} follows.
	  \\
		Next,  
			\[  \begin{split}
		1 + \sum\limits_{i=1}^{m-1}\sum\limits_{j=0}^{[(2+\a)(m-i) ]} N_j = m + \sum\limits_{i=1}^{m-1}\sum\limits_{j=1}^{[(2+\a)(m-i) ]}  N_j
		= m + \sum\limits_{i=1}^{m-1}\sum\limits_{k=1}^{m-i}\sum\limits_{j=1+[(2+\a)(k-1)]}^{[(2+\a)k ]} N_j\\
		= m + \sum\limits_{k=1}^{m-1}\sum\limits_{i=1}^{m-k}\sum\limits_{j=1+[(2+\a)(k-1)]}^{[(2+\a)k ]} N_j
		= m + \sum\limits_{k=1}^{m-1}(m-k)\sum\limits_{j=1+[(2+\a)(k-1)]}^{[(2+\a)k ]} N_j ,
		\end{split}\]
			  which is \eqref{morsep=1-bis}.
		\\
		Moreover, as clearly $[(2+\a) k]\ge \left(2+[\a]\right) k$, we have
		\begin{align*}
		(m-k) \sum\limits_{j=1+[(2+\a)(k\!-\!1)]}^{[(2+\a) k]} N_j  = (m-k)\sum\limits_{j=1+[(2+\a)(k\!-\!1)]}^{(2+[\a])k} N_j  + (m-k) \sum\limits_{j=1+(2+[\a])k}^{[(2+\a)k] } N_j \\
		\underset{k'=k+1}{=} (m-k)\sum\limits_{j=1+[(2+\a)(k\!-\!1)]}^{(2+[\a])k} N_j  + (m-k'+1) \sum\limits_{j=1+(2+[\a])(k'-1)}^{[(2+\a)(k'-1)] } N_j  \\
		\ge (m-k)\sum\limits_{j=1+[(2+\a)(k\!-\!1)]}^{(2+[\a])k} N_j  + (m-k') \sum\limits_{j=1+(2+[\a])(k'-1)}^{[(2+\a)(k'-1)] } N_j 
		\end{align*}
		Hence
			\[  \begin{split} m(u_p)\ge m + \sum\limits_{k=1}^{m-1}(m-k)\sum\limits_{j=1+[(2+\a)(k-1)]}^{[(2+\a)k ]} N_j 
		\\	\ge   m + \sum\limits_{k=1}^{m-1}(m-k)\sum\limits_{j=1+[(2+\a)(k-1)]}^{(2+[\a])k } N_j 
			+ \sum\limits_{k=2}^{m-1}(m-k)\sum\limits_{j=1+(2+[\a])(k-1)}^{[(2+\a)(k-1)] } N_j \\
			= m + (m-1) \sum\limits_{j=1}^{2+[\a] } N_j  + \sum\limits_{k=2}^{m-2}(m-k)\sum\limits_{j=1+(2+[\a])(k-1)}^{(2+[\a])k } N_j ,
		\end{split}\]
		which is \eqref{morsep=1-ter}.
	\end{proof}

\remove{\begin{remark}[Lane-Emden equation]\label{morseLEp=1}
There is numerical evidence that, beneath \eqref{est-bessel}, it also holds
\begin{align}\label{est-bessel+}  
z_{i}(\beta+2(m-i)+1)>z_{m}(\beta) .
\end{align}
In that case \eqref{unico-controllo} is improved to
\begin{equation}\label{unico-controllo+}
2 (m-i)+ 1 >\beta^m_i - \frac{N-2}{2+\a} > 2 (m-i) \ \text{ as } i=1,\dots m-1 .
\end{equation}
Consequently \eqref{Jp=1} gives
\begin{equation}\label{unico-controllo+J}
(2+\a) (m-i)+ \frac{2+\a}{2} > \lim\limits_{p\to 1}J^m_i(p) > (2+\a) (m-i) \ \text{ as } i=1,\dots m-1 .
\end{equation}
If $\a=0$ it follows that $\lim\limits_{p\to 1} \lceil J^m_i(p) -1\rceil = 2(m-i)$, 
so that \eqref{morsep=1} simplifies into
\begin{align} \label{morsep=1le}
m(u^m_p) = m+\sum\limits_{i=1}^{m-1}(m-i)(N_{2i-1}+N_{2i}) 
\end{align}
and the estimate in \eqref{morsep=1-} is attained.
Moreover in dimension  $2$ \eqref{morsep=1le} generalizes the computation in \cite{GI} (concerning $m=2$) to
\[ 	m(u^m_p) = m(2m-1) .	\]
\end{remark}

	In the general case  $\alpha>0$ the situation is more tangled and an explicit computation of the Morse index can not be deduced in this way, because the estimate \eqref{unico-controllo+J} does not single $\lim\limits_{p\to 1}\lceil J^m_i(p) -1\rceil$ out.
Seemingly the inequality in \eqref{morsep=1-} is strict and the presence of the nonautonomous coefficient $|x|^{\alpha}$ causes a more relevant increasing of the Morse index. This is confirmed by the planar case, where we have noticed that  $m(u^2_p) = 8$ near at $p=1$ for $\a$ in a left neighborhood of $1$, while estimate \eqref{morsep=1-} only gives $m(u^2_p) \ge 6$.

Although not optimal, the estimate \eqref{morsep=1-} is sufficient to show that, for every fixed $\a \ge 0$ and $m\ge 2$, the Morse index changes in the range of existence of solutions, and this will be crucial in Section \ref{sec:bif} to prove bifurcation.}

We therefore see that, in dimension $N\ge 3$, the Morse index of nodal radial solutions for $p$ close to $p_{\a}$ is smaller than the one  for $p$ is close to $1$.

	\begin{corollary}\label{cambio-morse-N>3}
	In dimension $N\ge 3$, for every value of $\a\ge 0$ and $m\ge 2$ there exist $1<\bar p< \bar q < p_{\a}$ such that 
	we have
	\[ m(u_p)  > m(u_q)  \qquad \text{ as } 1<p<\bar p \text{ and }  \bar q< q < p_{\a}. \]
\end{corollary}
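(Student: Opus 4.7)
The plan is to combine the exact asymptotic Morse index near $p_{\a}$ from Theorem \ref{teo:mi-p=palpha} with the lower bound \eqref{morsep=1-ter} near $p=1$ from Proposition \ref{prop:morsep=1-}, exploiting the fact that in dimension $N\ge 3$ the multiplicities $j\mapsto N_j$ are strictly increasing. First I would fix $\bar q<p_{\a}$ from Theorem \ref{teo:mi-p=palpha} and, using $N_0=1$ together with the identity $[(2+\a)/2]=1+[\a/2]$, rewrite
\[
m(u_q)=(m-1)+\sum_{j=1}^{\lceil\a/2\rceil}N_j+(m-1)\sum_{j=1}^{1+[\a/2]}N_j .
\]
Then I would pick $\bar p>1$ from Proposition \ref{prop:morsep=1-} so that the bound \eqref{morsep=1-ter} holds on $(1,\bar p)$.

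Next I would peel off only the $k=1$ term of that lower bound, namely $(m-1)\sum_{j=1}^{2+[\a]}N_j$. Since $[\a]\ge[\a/2]$ we have $2+[\a]>1+[\a/2]$, so this sum strictly contains the second block of $m(u_q)$; subtracting leaves the residue $(m-1)\sum_{j=2+[\a/2]}^{2+[\a]}N_j$ to balance against $\sum_{j=1}^{\lceil\a/2\rceil}N_j$. Throwing away the nonnegative contributions from $k\ge 2$ and using the $m$ versus $m-1$ gap between the two formulas, what remains to prove is simply
\[
1+(m-1)\sum_{j=2+[\a/2]}^{2+[\a]}N_j\ >\ \sum_{j=1}^{\lceil\a/2\rceil}N_j .
\]

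Finally I would argue term by term. A short parity check on $\a$ shows that $2+[\a/2]\ge\lceil\a/2\rceil+1$ (equality only when $\a/2\notin\mathbb{Z}$) and that the residue contains at least $1+[\a]-[\a/2]\ge\lceil\a/2\rceil$ summands; since in dimension $N\ge 3$ the sequence $N_j$ is strictly increasing, each residue term exceeds each right-hand term, so the displayed inequality follows. The step I expect to be mildly delicate is the extremal case $m=2$ with $\a$ a positive even integer, where the index margin shrinks to one and one must rely explicitly on the strict monotonicity of $N_j$ (which is precisely why the statement is confined to $N\ge 3$); even there the spare $+1$ on the left-hand side ensures the strict inequality.
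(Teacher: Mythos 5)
Your argument is correct. It rests on exactly the same inputs as the paper's proof --- the exact formula \eqref{mi-p=palpha} near $p_{\a}$, the lower bound \eqref{morsep=1-ter} near $p=1$, and the monotonicity \eqref{mult-incr} of $j\mapsto N_j$ in dimension $N\ge 3$ --- but the final combinatorial verification is genuinely different. The paper first relaxes \eqref{mi-p=palpha} to the upper bound $m(u_q)\le m\sum_{j=0}^{[(2+\a)/2]}N_j$, reduces the claim to the positivity of a quantity $h(m)$ that retains \emph{all} the blocks $k=1,\dots,m-1$ of \eqref{morsep=1-ter}, and proves $h(m)>0$ by induction on $m$ (base case $h(2)>0$, then an explicit computation of $h(m+1)-h(m)$). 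You instead keep the exact value of $m(u_q)$, discard every block of \eqref{morsep=1-ter} except $k=1$, and settle the resulting inequality $1+(m-1)\sum_{j=2+[\a/2]}^{2+[\a]}N_j>\sum_{j=1}^{\lceil\a/2\rceil}N_j$ by a one-shot term count: the residue has at least $\lceil\a/2\rceil$ summands, each carrying an index strictly larger than every index appearing on the right. This is shorter and avoids the induction; the paper's route, by carrying the higher blocks, makes the surplus explicit and visibly growing with $m$, which is consistent with the spirit of the surrounding estimates. Two cosmetic remarks: your identification of the delicate case is slightly off, since for $\a$ a positive even integer the index margin $2+[\a/2]-\lceil\a/2\rceil$ equals $2$ rather than $1$ (the tight configuration, with margin one and exactly $\lceil\a/2\rceil$ residue summands, occurs for $\a\in(2k,2k+1)$); and thanks to the spare $+1$ your final step would in fact go through with the non-strict version of \eqref{mult-incr}. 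Neither point affects the validity of the proof.
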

\begin{proof}
	By \eqref{mi-p=palpha}  we know that for every $\a\ge 0$
	\[ m(u_q) \le m \sum\limits_{j=0}^{\left[\frac{2+\alpha}{2}\right]} N_j  \]
	as long as $q$ is near $p_{\a}$.
	So, thanks to the estimate \eqref{morsep=1-ter}, the claim follows after checking that  
	\begin{equation}\label{suff} \begin{split}
	h(m): = \sum\limits_{i=1}^{m-1}(m-i) \sum\limits_{j=1+(2+[\a])(i-1)}^{(2+[\a]) i} N_{j} - m \sum\limits_{j=1}^{1+\left[{\alpha}/{2}\right]} N_j 
	>0 .
	\end{split}\end{equation}
	\eqref{suff} can be proved by induction on the number of nodal zones $m\ge 2$,  taking advantage from the fact that in dimension $N\ge 3$ the multeplicity $N_j$ increases with $j$, i.e.
	\begin{equation}\label{mult-incr}
	N_{j+1} > N_j \quad \text{ as \ } \ j\ge 1.
	\end{equation}
	\\
	We first check 
	\[ h(2)=\sum\limits_{j=1}^{2+[\a]} N_{j} - 2 \sum\limits_{j=1}^{1+\left[{\alpha}/{2}\right]} N_j = \sum\limits_{j=2+[\a/2]}^{2+[\a]} N_{j} -  \sum\limits_{j=1}^{1+\left[{\alpha}/{2}\right]} N_j 
	>0 .\]
	When $\a\in[0,2)$, $[\a]\ge [\a/2]=0$ and we have $h(2)\ge N_2- N_1 >0$.
	\\
	Otherwise if $\alpha\ge 2$ then $[\a/2]\ge 1$ and  \eqref{mult-incr} yields
	\begin{align*} h(2) & >\left([\a]-[\a/2]\right) N_{2+[\a/2]} - [\a/2] N_{1+[\a/2]} 
	\intertext{and since $[\a]\ge 2 [\a/2]$ we have}
	& \ge  [\a/2] \left(N_{2+[\a/2]} - N_{1+[\a/2]} \right)>  0
	\end{align*}
	by using   \eqref{mult-incr} once more.
	\\
	After we take that $h(m)>0$ for some $m\ge 2$ and deduce that also $h(m+1)>0$.
	Let us compute
	\begin{align*}
	\sum\limits_{i=1}^{m}(m+1-i) \sum\limits_{j=1+(i-1)(2+[\a])}^{i(2+[\a])} N_{j} = \sum\limits_{i=1}^{m-1}(m-i) \sum\limits_{j=1+(i-1)(2+[\a])}^{i(2+[\a])} N_{j} + \sum\limits_{j=1}^{(2+[\a])m} N_{j} ,
	\end{align*}
	hence
	\[ h(m+1) = h(m) + \sum\limits_{j=1}^{(2+[\a])m} N_{j} - \sum\limits_{j=1}^{1+[\a/2]} N_{j} > \sum\limits_{j=2+[\a/2]}^{(2+[\a])m} N_{j} >0 ,\]
	and this concludes the proof.
\end{proof}

	In the next section we will see that the changes in the Morse index caused by the first singular eigenvalue $\nu_1(p)$ play a crucial role in establishing bifurcation results. Therefore the parameter $\beta_1$ deserves a special attention, in particular the integer number 
	\begin{equation}\label{n-def}
	n_{\a}^m : = \left\lceil \frac{(2+\a)\beta_1 -N}{2}\right\rceil ,
	\end{equation}
which is characterized by the double inequality
\begin{equation}\label{n-def-char}
z_1\left(\frac{ 2 n_{\a}^m+N-2}{2+\a}  \right) < z_m\left(\frac{N-2}{2+\a}  \right) \le  z_1\left(\frac{2n_{\a}^m+N}{2+\a} \right) .
\end{equation}
Once  that the dimension $N$, the exponent $\a$ and  the number of nodal zones $m$ have been fixed, the number $n^m_{\a}$ can be easily computed by using iteratively the function \texttt{Besselzero} in MathLab, for instance. Besides it is already known by \eqref{unico-controllo} that 
	\begin{equation}\label{n-unico-controllo} n_{\a}^m \ge 2(m-1) + [ \a(m-1)] .
	\end{equation} 
For $\a=0$ (Lane-Emden equation) there is numerical evidence that for every $N$
\[ z_1\left(2(m-1) + \frac{N-2}2   \right) < z_m\left(\frac{N-2}2  \right) \le  z_1\left(2(m -1) +\frac{N}2 \right) , \]
so that  $n_{0}^m = 2(m-1)$ indeed.

\section{Global bifurcation}\label{sec:bif}


Here we prove the bifurcation results stated in Section \ref{sec:stat}. 
It is well known that if $(p, u_p)$ is a bifurcation point in the curve $\mathcal S^m$, then the solution  $u_{p}$ has to be degenerate, which means that the linearized operator
$L_{p}$ defined in \eqref{linearized} has nontrivial kernel in $H^1_0(B)$, or equivalently  $\L=0$ is an eigenvalue for \eqref{standard-eig-prob}.  
In Section \ref{sec:prel} we have noticed that the Morse index changes within the interval $(1, p_{\a})$, so that degeneracy values do exist.
Besides we can not rely on any variational structure, since we aim to include also supercritical values of $p$, and bifurcation can be obtained only through an odd change of the Morse index. Hence  a better knowledge of the kernel of $L_p$ is needed.
By \cite[Theorem 1.3]{AG-sing-2} the radial solutions are radially nondegenerate, i.e.~the kernel of $L_{p}$ does not contain radial functions. Moreover, the degeneracy has been characterized in \cite[Proposition 1.5]{AG-sing-1} in terms of the eigenvalues $\nu_i(p)$ showing that
\begin{proposition}\label{prop:degeneracy}
Let 	$u_p$ be a radial solution to \eqref{H} with $m$ nodal zones. It  is degenerate if and only if
	\begin{equation}\label{non-radial-degeneracy-H}
	{\nu}_i(p) =  - \left(\frac{2}{2+\a}\right)^2 j (N-2+j) \qquad \mbox{ for some $i=1,\dots m$ and $j\ge 1$.}
	\end{equation}
	Besides any function in the kernel of $L_{p}$ can be written according to the decomposition formula 
	\begin{equation}\label{decomposition}
 \phi(x)= \psi_{i,p}(|x|^{\frac{2+\a}2})Y_j(x/|x|) ,\end{equation}	
 where $	\psi_{i,p}$ is an eigenfunction for \eqref{eigenvalue-problem} related to an eigenvalue $\nu_i(p)$ satisfying \eqref{non-radial-degeneracy-H}, and $Y_j$ stands for an eigenfunction of the eigenvalue $j (N-2+j)$ of the Laplace-Beltrami operator.
\end{proposition}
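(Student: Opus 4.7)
The plan is to exploit the full rotational invariance of the linearization at a radial solution, decomposing $\ker L_p$ via spherical harmonics, and reducing each resulting angular mode to the singular Sturm--Liouville problem \eqref{eigenvalue-problem} by means of the Emden--Fowler-type substitution \eqref{transformation-henon-no-c}.

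Since $u_p$ depends only on $|x|$, the operator $L_p$ commutes with the action of $O(N)$ on $H^1_0(B)$. I would therefore expand any $\phi\in\ker L_p$ in spherical harmonics as
\[
\phi(x)=\sum_{j\ge 0}\sum_{k=1}^{N_j}\psi_{j,k}(|x|)\, Y_{j,k}(x/|x|),
\]
where $\{Y_{j,k}\}_k$ is an orthonormal basis of eigenfunctions of $-\Delta_{{\mathbb S}_{N-1}}$ for the eigenvalue $\lambda_j=j(N-2+j)$, and argue one mode at a time. Using the standard decomposition $-\Delta \phi = -\phi_{rr}-\tfrac{N-1}{r}\phi_r-\tfrac{1}{r^2}\Delta_{{\mathbb S}_{N-1}}\phi$, each radial profile $\psi=\psi_{j,k}$ must solve
\[
-\psi'' -\tfrac{N-1}{r}\psi' + \tfrac{j(N-2+j)}{r^2}\,\psi = p\, r^{\alpha}|u_p|^{p-1}\psi \quad \text{in } (0,1), \qquad \psi(1)=0.
\]

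Next I would apply the change of variables $t=r^{(2+\alpha)/2}$ of \eqref{transformation-henon-no-c}, setting $\varphi(t)=\psi(r)$. A direct computation, using the identity $M-1=(2N+\alpha-2)/(2+\alpha)$ and the definition \eqref{ap} of $a_p$, turns the ODE above into
\[
-(t^{M-1}\varphi')'- t^{M-1} a_p(t)\,\varphi = -\left(\tfrac{2}{2+\alpha}\right)^{2} j(N-2+j)\, t^{M-3}\varphi,
\]
which is exactly the singular eigenvalue problem \eqref{eigenvalue-problem} with $\nu=-(2/(2+\alpha))^{2}j(N-2+j)$. Consequently the mode $\psi_{j,k}$ is nontrivial if and only if this value coincides with some $\nu_i(p)$ and $\varphi$ is a multiple of the associated eigenfunction $\psi_{i,p}$. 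Since $u_p$ is radially nondegenerate by \cite[Theorem 1.3]{AG-sing-2}, the mode $j=0$ is excluded, yielding both the degeneracy criterion \eqref{non-radial-degeneracy-H} with $j\ge 1$ and the decomposition \eqref{decomposition}; sufficiency is the immediate verification that $\psi_{i,p}(|x|^{(2+\alpha)/2})\,Y_j(x/|x|)$ lies in $\ker L_p$ whenever \eqref{non-radial-degeneracy-H} holds.

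The main technical point I expect to pin down carefully is the functional-analytic correspondence between the two settings: one has to check that $\phi\in H^1_0(B)$ is equivalent to every transformed radial profile $\varphi_{j,k}$ belonging to $\mathcal{H}_{0,M}$, and that the spherical harmonic series converges in $H^1_0(B)$ with $L_p$ acting termwise. These identifications are exactly the ones underlying the passage from \eqref{standard-eig-prob} on $H^1_0(B)$ to \eqref{singular-eig-prob} on $\mathcal{H}_0$ discussed in \cite{AG-sing-1}, so once that machinery is invoked the proof is essentially Fourier analysis on the sphere.
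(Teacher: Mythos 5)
Your argument is correct, but note that the paper does not prove Proposition \ref{prop:degeneracy} at all: it is imported verbatim from \cite[Proposition 1.5]{AG-sing-1}, so there is no internal proof to compare against. Your reconstruction (spherical harmonic decomposition of $\ker L_p$, reduction of each angular mode to an ODE, and the substitution $t=r^{\frac{2+\a}{2}}$ turning that ODE into \eqref{eigenvalue-problem} with $\nu=-\big(\tfrac{2}{2+\a}\big)^2 j(N-2+j)$) is exactly the mechanism underlying the cited result, and the two points you flag as needing care --- membership of the transformed profiles in $\mathcal H_{0,M}$ (the weighted condition $\int_0^1 t^{M-3}\varphi^2\,dt<\infty$, which for $N=2$ genuinely relies on $j\ge1$) and the fact that the negative spectrum of \eqref{eigenvalue-problem} is exhausted by the simple eigenvalues $\nu_1(p),\dots,\nu_m(p)$ --- are precisely the ingredients supplied by \cite{AG-sing-1}, together with the radial nondegeneracy from \cite{AG-sing-2} that you correctly invoke to exclude $j=0$.
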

For a positive solution only the first eigenvalue $\nu_1(p)$ plays a role and one can manage to obtain an odd change in the Morse index by restricting the attention to the subspace of $O(N-1)$-invariant functions, as in \cite{AG14, FN17}.
For a nodal solution, instead,  the equality \eqref{non-radial-degeneracy-H} can hold  for different values of $i$ and $j$  and \eqref{decomposition} brings out that  the  kernel of $L_p$ has a complex structure.
This difficulty can be dealt with  by  turning to the notion of degree and index of fixed points in the positive cones introduced in  Section \ref{sec:stat}.
\remove{{\taglia introduced by Dancer in \cite{D83}, which in addition  gives some insights on the symmetries of the bifurcating solutions and the global properties of the branches, providing a more detailed bifurcation analysis also for positive solutions, and in the subcritical case. }
	\\
To go on some  notation is needed.
We adopt the 	spherical coordinates in $\R^{\n}$ given by $(r,\theta,\varphi)$  with $r=|x|\in[0,+\infty)$, $\theta\in[-\pi,\pi]$, $\varphi =(\varphi_1,\dots\varphi_{\n\!-\!2})\in (0,\pi)^{\n-2}$ 	so that
	\[\begin{array}{ll}
	x_1=r \cos\theta \prod\limits_{h=1}^{\n\!-\!2}\sin\varphi_h  , \qquad &
	x_2= r \sin\theta \prod\limits_{h=1}^{\n\!-\!2}\sin\varphi_h  , \\
	x_{k}= r \cos \varphi_{k\!-\!2} \prod\limits_{h=k-1}^{\n\!-\!2}\sin\varphi_h \ \mbox{ as } k=3,\dots \n-1 , \quad  &
	x_{\n} = r \cos \varphi_{\n\!-\!2}. 
	\end{array}\]
	In particular for any $x\neq 0$, $(\theta, \varphi)$ are the coordinates of  $x/|x|   \in \mathbb S_{N-1}$.
	Next   for any natural number $n$ we introduce the spaces
	\begin{align*}
	H^1_{0,n} : = &  \big\{u\in    H^1_0  \, : \,  u(r,\theta,\varphi) \hbox{ is even and } {2\pi}/n  \hbox{ periodic  w.r.t. } \theta , \\
	&  \qquad \qquad \hbox{ for every } r\in (0,1) \text{ and } \varphi \in (0,\pi)^{\n-2} \big\}, \\
	X_n : = & 	H^1_{0,n} \cap C^{1,\gamma}(B), 
	\intertext{and  the positive cones already used in \cite{AG-bif}, i.e.}
	K_n : =  & \big\{u\in    X_n \, : \,   \hbox{ is nonincreasing w.r.t.~ } \theta\in (0,\pi/n), \\
	&  \qquad \qquad  \hbox{ for every } r\in (0,1) \text{ and } \varphi \in (0,\pi)^{\n-2}\big\}.
	\end{align*}
$\Sigma_n^m$ will stand for the closure of the set made up of nonradial solutions in the cone $K_n$, or  rather of
	\[ \{ (p, u) \in (1,p_{\a})\times K_n\setminus \mathcal{S}^m  \, : \, u \mbox{ solves \eqref{H}} \}. 
	\]
We will show that an unbounded continuum of nodal nonradial solutions in $\Sigma^m_n$ detaches from the curve $\mathcal S^m$, for some integers $n$ depending on the exponent $\a$ and the number of nodal zones $m$.  In that case,  letting $(p_n, u_{p_n})$ be  the bifurcation point, we denote by $\mathcal C_n$ the closed connected component of $\Sigma^m_n$ containing $(p_n,u_{p_n})$ and we shall refer it as a \textquotedblleft branch\textquotedblright  departing from $(p_n, u_{p_n}) $, with a little abuse of language.
	\\}

		Letting $T$ be the  operator
\[ T(p,v) :   (1,p_{\a})\times  C^{1,\gamma}_0(B)\longrightarrow  C^{1,\gamma}_0(B) , \quad  T(p,v)=(-\Delta)^{-1}\left(|v|^{p-1}v\right), \]
it is clear that $\mathcal S^m$  are curves of fixed points for $T$ and more generally $u$ solves \eqref{H} when $u=T(p,u)$.
Moreover 
minor variations on \cite[Lemmas 2.2, 3.1]{AG-bif} allow seeing that 

\begin{lemma} 
	The operator $T(p,\cdot)$ maps both  $X_n$ and $K_n$ into themselves.
	\end{lemma}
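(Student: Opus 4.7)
The plan is to adapt the arguments of \cite{AG-bif} (Lemmas 2.2 and 3.1) with minimal changes. The only structural feature specific to the H\'enon problem is the weight $|x|^\a$ in the nonlinearity, and since this weight is invariant under every Euclidean isometry fixing the origin, it interacts in no way with the $\theta$-symmetries that define $X_n$ and $K_n$.

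\textbf{Invariance of $X_n$.} Fix $v\in X_n$ and set $f(x):=|x|^\a|v(x)|^{p-1}v(x)$ (or just $|v|^{p-1}v$ if the weight is not built into $T$). As the symmetry conditions defining $X_n$ are pointwise, $f$ inherits both the evenness in $\theta$ and the $2\pi/n$-periodicity in $\theta$. Let $G_n$ be the dihedral group generated by $\theta\mapsto-\theta$ and $\theta\mapsto\theta+2\pi/n$. Because $-\Delta$ commutes with every $\sigma\in G_n$, the functions $u:=T(p,v)$ and $u\circ\sigma$ solve the same Poisson problem $-\Delta w=f$ in $B$, $w=0$ on $\partial B$; uniqueness forces $u\circ\sigma=u$, so $u\in H^1_{0,n}$. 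Boundedness of $v$ gives $f\in L^\infty(B)$, and classical Schauder theory yields $u\in C^{1,\gamma}(\overline B)$, so $u\in X_n$.

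\textbf{Invariance of $K_n$.} Let $v\in K_n$ and $u:=T(p,v)$. For each $\phi\in(0,\pi/n)$, denote by $\sigma_\phi$ the reflection of $\R^N$ across the hyperplane $\{x:\theta(x)\in\{\phi,\phi-\pi\}\}$, and consider $w_\phi(x):=u(x)-u(\sigma_\phi x)$ on the half-ball $B_\phi:=\{x\in B:\theta(x)\in(\phi-\pi,\phi)\}$. Unwinding the evenness in $\theta$, the $2\pi/n$-periodicity, and the monotonicity on the fundamental sector $[0,\pi/n]$ that characterise $K_n$ delivers the pointwise comparison $v(x)\ge v(\sigma_\phi x)$ on $B_\phi$; since $|x|^\a$ and $-\Delta$ are both $\sigma_\phi$-invariant, this yields $-\Delta w_\phi\ge 0$ in $B_\phi$. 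As $w_\phi$ vanishes on $\partial B_\phi$ (on $\partial B$ because $u=0$ there, on the reflection hyperplane by antisymmetry of $w_\phi$), the maximum principle implies $w_\phi\ge 0$ in $B_\phi$. For any two points sharing the same $(r,\varphi)$ and with angles $0<\theta_1<\theta_2<\pi/n$, taking $\phi=(\theta_1+\theta_2)/2$ reads off $u(r,\theta_1,\varphi)\ge u(r,\theta_2,\varphi)$; hence $u\in K_n$.

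The one delicate step is the pointwise inequality $v\ge v\circ\sigma_\phi$ on $B_\phi$, which requires a careful bookkeeping of how evenness in $\theta$ and $2\pi/n$-periodicity interlock with monotonicity on $[0,\pi/n]$. This is exactly the content of the corresponding planar argument in \cite[Lemma 3.1]{AG-bif}, and transfers unchanged to $\a>0$ because of the radial character of the weight $|x|^\a$.
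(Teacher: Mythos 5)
Your treatment of the $X_n$-invariance is fine: it is the standard \textquotedblleft symmetrize the right-hand side, invoke uniqueness of the Dirichlet problem, then Schauder theory\textquotedblright\ argument, and since the paper itself disposes of this lemma by citing \cite{AG-bif}, there is nothing to compare there. The problem is in the $K_n$ part, and it is a genuine one: the pointwise comparison $v\ge v\circ\sigma_\phi$ on the half-ball $B_\phi=\{\theta\in(\phi-\pi,\phi)\}$, which you single out as the delicate step and defer to \cite{AG-bif}, is simply false for every $n\ge 2$. A function $v\in K_n$ has the form $v(r,\theta,\varphi)=h_{r,\varphi}\bigl(\mathrm{dist}(\theta,\tfrac{2\pi}{n}\mathbb Z)\bigr)$ with $h_{r,\varphi}$ nonincreasing on $[0,\pi/n]$. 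Take $n=2$, $\phi=\pi/4$ and $\theta=-3\pi/8\in(\phi-\pi,\phi)$: then $\mathrm{dist}(\theta,\pi\mathbb Z)=3\pi/8$ while $\sigma_\phi\theta=7\pi/8$ has $\mathrm{dist}(\sigma_\phi\theta,\pi\mathbb Z)=\pi/8$, so $v(\theta)\le v(\sigma_\phi\theta)$, strictly for generic $v\in K_2$. Consequently $-\Delta w_\phi\ge 0$ fails on part of $B_\phi$ and the maximum principle cannot be applied on the half-ball. What you have written is the $n=1$ argument, transplanted verbatim to general $n$, where it breaks.

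The repair is to run the reflection on the period sector $S_\phi=\{\phi-\pi/n<\theta<\phi\}$ rather than on the half-ball: an elementary check (write $\theta=\phi-s$, $s\in(0,\pi/n)$, and compare $|\phi-s|$ with $\min(\phi+s,\,2\pi/n-\phi-s)$) shows that $\mathrm{dist}(\theta,\tfrac{2\pi}{n}\mathbb Z)\le\mathrm{dist}(2\phi-\theta,\tfrac{2\pi}{n}\mathbb Z)$ does hold throughout $S_\phi$, so $v\ge v\circ\sigma_\phi$ and hence $-\Delta w_\phi\ge0$ there. The price is an extra piece of boundary, $\{\theta=\phi-\pi/n\}$, on which you must also verify $w_\phi=0$; this follows from the evenness and $2\pi/n$-periodicity of $u=T(p,v)$ established in your first step, since $u(r,\phi-\pi/n,\varphi)=u(r,\pi/n-\phi,\varphi)=u(r,\pi/n+\phi,\varphi)=u(\sigma_\phi x)$ at such points. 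The maximum principle on $S_\phi$ then gives $w_\phi\ge0$, and your choice $\phi=(\theta_1+\theta_2)/2$ with $0<\theta_1<\theta_2<\pi/n$ still places the point of angle $\theta_1$ inside $S_\phi$, so the monotonicity conclusion survives. This sectorial reflection is precisely the content of \cite[Lemma 3.1]{AG-bif} that you appeal to; as stated, your rendering of it is not correct.
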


Denoting by $T'_{u}(p,\cdot)$ the Fr\'echet derivative of $T(p,\cdot)$ computed at $u$, we say that $u$  is an isolated fixed point for $T(p,\cdot )$ w.r.t. $X_n$ 
when $I-T'_{u}(p,\cdot)$ is invertible in $X_n$, which is assured by the nondegeneracy of $u$. Starting from the characterization of degeneracy in Proposition \ref{prop:degeneracy} one can see that radial solutions $u_p$ are  isolated fixed points, except at most a discrete set of $p$.  
It follows from a  general regularity result.

\begin{lemma}\label{lem:analiticita}
	The maps $p\mapsto \nu_i(p)$ are analytic in $p$. 
\end{lemma}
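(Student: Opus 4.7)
The plan combines two ingredients: the analytic dependence of the radial profile $v_p$ on $p$, and Kato's perturbation theory for analytic families of self-adjoint operators.

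First I would show that $p \mapsto v_p$ is real-analytic as a map into $C^1([0,1])$. Recasting the radial ODE \eqref{LE-radial-no-c} as a first-order Cauchy problem with data $v(0) = c$, $v'(0) = 0$, the nonlinearity $t^{M-1}|v|^{p-1}v$ is $C^1$ in $v$ and is jointly real-analytic in $(v, p)$ away from $\{v = 0\}$; moreover, for each fixed $v \neq 0$ the map $p \mapsto |v|^{p-1}v$ is entire. Writing the ODE in integral form and invoking the Banach fixed-point theorem with analytic dependence on parameters gives that the solution $v(t\,;c,p)$ depends analytically on $(c,p)$ on each subinterval where $v$ does not vanish. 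Since the radial solution with $m$ nodal zones has finitely many simple zeros, one can glue the analytic continuations across each such zero (the crossing time $\tau(c,p)$ is itself analytic in $(c,p)$ by the analytic implicit function theorem, since $v'$ is nonzero there). Uniqueness of the $m$-nodal solution reduces the identification of $v_p$ to the shooting equation $v(1; c_p, p) = 0$; the radial nondegeneracy recorded from \cite[Theorem 1.3]{AG-sing-2} gives $\partial_c v(1; c_p, p) \neq 0$, and the analytic implicit function theorem then yields $p \mapsto c_p$ analytic, hence $p \mapsto v_p$ analytic.

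Propagating this regularity, the coefficient $a_p(t) = p|v_p(t)|^{p-1}$ is real-analytic as a map into $L^\infty(0,1)$ (pointwise, using that $v_p$ has only simple zeros, and locally uniformly near each zero by continuity in $(c,p)$). Next I would apply Kato's perturbation theory. The singular eigenvalue problem \eqref{eigenvalue-problem} corresponds, on the Hilbert space $\mathcal{L}_M$ endowed with the inner product $\int_0^1 t^{M-3}\phi\psi\,dt$, to the self-adjoint operator $A_p$ associated with the closed, symmetric, lower-semibounded quadratic form
\[
Q_p(\phi) = \int_0^1 t^{M-1}\bigl(|\phi'|^2 - a_p \phi^2\bigr)\,dt, \qquad \phi \in \mathcal{H}_{0,M}.
\]
Since the $p$-dependence enters only through the bounded multiplicative perturbation $a_p$, which is analytic into $L^\infty$, $\{Q_p\}$ is a holomorphic family of quadratic forms of type (B) in Kato's sense. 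As noted after \eqref{eigenvalue-variat}, each $\nu_i(p)$ lying below the essential-spectrum threshold $(M-2)/2$ is a simple, isolated eigenvalue of $A_p$, and by the Kato--Rellich theorem for analytic families of type (B) (Kato, \emph{Perturbation Theory for Linear Operators}, Chap.~VII) the map $p \mapsto \nu_i(p)$ is real-analytic.

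The main obstacle is the first step: the failure of $(v,p) \mapsto |v|^{p-1}v$ to be jointly analytic at $v = 0$, which a priori blocks a direct application of the analytic ODE theorem across a nodal point. The way I would handle this is to exploit the transversality of the nodes: on a neighborhood of each simple zero one may use $v$ itself (or a linear substitute) as an analytic change of independent variable, turning the passage across the node into the composition of two analytic-in-$(c,p)$ pieces. With this reduction in place, the rest of the proof is a routine application of the analytic implicit function theorem and Kato's theory, both of which apply cleanly thanks to the radial nondegeneracy and to the variational characterization of the simple eigenvalues $\nu_i(p)$ below the threshold $(M-2)/2$.
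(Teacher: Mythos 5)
First, note that the paper does not actually prove Lemma \ref{lem:analiticita}: it delegates to \cite[Proposition 4.1]{AG14}, \cite[Lemma 3.2]{DW} and \cite[Lemma 7.1]{GI}. Your overall strategy --- analyticity of the radial profile via shooting plus the analytic implicit function theorem (using radial nondegeneracy), followed by Kato's perturbation theory for the simple isolated eigenvalues $\nu_i(p)$ below the threshold $(M-2)/2$ --- is indeed the strategy of those references, and the first half of your argument (gluing the analytic flow across the simple interior zeros by using $v$ as the independent variable) is sound.

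There is, however, a genuine gap at the hinge of the second half: the claim that $p\mapsto a_p=p|v_p|^{p-1}$ is real-analytic as a map into $L^\infty(0,1)$ is false for nodal solutions. Near an interior nodal radius $t_0(p)$ one has $v_p(t)\sim c\,(t-t_0(p))$, and since $v_p(t_0(p),p)\equiv 0$ with $t_0'(p)\neq 0$ in general, $\partial_p v_p$ does \emph{not} vanish at $t_0(p)$; hence $\partial_p a_p$ contains the term $p(p-1)|v_p|^{p-3}v_p\,\partial_p v_p\sim |t-t_0(p)|^{p-2}$, which is unbounded for $1<p<2$, and the $k$-th derivative behaves like $|t-t_0(p)|^{p-1-k}$, so no order of differentiability in $L^\infty$ survives at any $p$. (This is exactly the point where positive solutions are easier: their only zero is the fixed boundary $t=1$, where $\partial_p v_p$ does vanish.) Consequently you cannot present $Q_p$ as $Q_{p_0}$ plus a power series of uniformly form-bounded perturbations by appealing to an $L^\infty$-analytic coefficient, and the type (B) hypothesis is not verified as written. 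The conclusion can still be reached, but it requires an extra idea you have not supplied: one must show directly that $p\mapsto Q_p(\phi)=\int_0^1 t^{M-1}(|\phi'|^2-a_p\phi^2)\,dt$ extends holomorphically with the required uniform relative bounds, e.g.\ by straightening each node with the analytic change of variable $s=v_p(t)$, after which the offending integrals take the form $\int|s|^{p-1}\psi(s,p)\,ds$ with $\psi$ analytic, and termwise integration of the power series of $\psi$ produces the analytic functions $\delta^{p+k}/(p+k)$. You correctly flagged the non-analyticity of $(v,p)\mapsto|v|^{p-1}v$ at $v=0$ as the obstacle for the ODE step, but the same phenomenon strikes again, more severely, in the coefficient of the linearized problem, and your proposal does not address it there.
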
 
We do not report the details of the proof.  For positive solutions to the H\'enon equation it has been proved in \cite[Proposition 4.1]{AG14}.
For sign changing solutions to the Lane-Emden equation in dimension $N\ge 3$ the proof is contained in \cite[Lemma 3.2]{DW}, and it has been adapted to the case $N=2$ in  \cite[Lemma 7.1]{GI}.

\

One can now compute the index of $u_p$ relative to the cone $K_n$, see \cite{D83}, which will be denote by $\mathrm{index}_{K_n} (p, u_p)$.
It is important to note that, also in the case of nodal solutions, the first singular eigenvalue determines by itself such index.

	\begin{lemma}\label{soloi=1}
		Let $p$ be such that $u_p$ is nondegenerate. Then 
			\[
		\mathrm{index}_{K_n}(p,u_p)=\left\{\begin{array}{ll}
		0 & \text{ if }\nu_1(p)< -\big(\frac{2}{2+\a}\big)^2 n (N-2+n), \\[.2cm]
		{\mathrm{deg}}_{X_n}(I-T(p,\,))=\pm 1
		& \text{ if }\nu_1(p)> -\big(\frac{2}{2+\a}\big)^2 n (N-2+n) . \end{array}\right.
		\]
	\end{lemma}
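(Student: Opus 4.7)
I would apply Dancer's index formula for a fixed point in a wedge (\cite{D83}; the same scheme is carried out in \cite{AG-bif}). The wedge $K_n$ has edge $E_n:=K_n\cap(-K_n)=\{u\in X_n:u\text{ does not depend on }\theta\}$, and $u_p\in E_n$ because $u_p$ is radial. Since $u_p$ is constant in $\theta$, one checks that $u_p+tv\in K_n$ for small $t>0$ iff $v$ is nonincreasing in $\theta$ on $(0,\pi/n)$, so the tangent cone at $u_p$ is $K_n$ itself. Dancer's theorem then says
\[
\mathrm{index}_{K_n}(p,u_p)=\begin{cases} 0 & \text{if }T'_{u_p}\text{ has an eigenvalue }\mu>1\text{ with eigenvector in }K_n\setminus E_n,\\ \deg_{X_n}(I-T(p,\cdot)) & \text{otherwise,}\end{cases}
\]
the latter being $\pm 1$ because the nondegeneracy of $u_p$ makes $I-T'_{u_p}$ invertible on $X_n$.

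Next, by separation of variables (justified by the $O(N)$-equivariance of $L_p$), every eigenvector of $T'_{u_p}$ has the form $\phi(x)=\psi_i(|x|^{(2+\a)/2})Y_j(x/|x|)$ as in Proposition \ref{prop:degeneracy}, and a routine extension of the proof of \eqref{tag-2-H} shows that $\mu>1$ is an eigenvalue with eigenvector of this type iff $\nu_i(p)<-\bigl(\tfrac{2}{2+\a}\bigr)^2 j(N-2+j)$. The key point is then to single out the pairs $(i,j)$ whose eigenvector lies in $K_n\setminus E_n$. Membership in $X_n$ forces $Y_j$ to have $\theta$-dependence $\sum_{k\ge 0}F_k(\varphi)\cos(kn\theta)$ (from evenness and $2\pi/n$-periodicity); for $j<n$ the angular frequency $kn$ exceeds $j$ for every $k\ge 1$, so only the mode $k=0$ is compatible with the spherical harmonic equation and $\phi\in E_n$. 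For $i\ge 2$ the monotonicity constraint $\partial_\theta\phi\le 0$ on $(0,\pi/n)$ reads
\[ \psi_i(|x|^{(2+\a)/2})\sum_{k\ge 1}kn\,F_k(\varphi)\sin(kn\theta)\ge 0, \]
and the $r$-dependent sign changes of $\psi_i$ against the $r$-independent angular sum force the latter to vanish identically; by linear independence of $\{\sin(kn\theta)\}_{k\ge 1}$ all $F_k$ with $k\ge 1$ must then be zero, again putting $\phi\in E_n$. Only the pairs $(1,j)$ with $j\ge n$ survive.

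For these pairs the threshold $-\bigl(\tfrac{2}{2+\a}\bigr)^2 j(N-2+j)$ is a decreasing function of $j$, so the weakest (hence relevant) constraint is at $j=n$, namely $\nu_1(p)<-\bigl(\tfrac{2}{2+\a}\bigr)^2 n(N-2+n)$. In that range the explicit function $\psi_1(|x|^{(2+\a)/2})\cos(n\theta)\prod_{h}\sin^n\varphi_h$---arising from $\Re(x_1+ix_2)^n/r^n$, whose $\varphi$-factor is positive---realizes an eigenvalue $\mu>1$ and lies in $K_n\setminus E_n$, so Dancer's formula gives index $0$. In the complementary range every $j\ge n$ satisfies $-\bigl(\tfrac{2}{2+\a}\bigr)^2 j(N-2+j)\le -\bigl(\tfrac{2}{2+\a}\bigr)^2 n(N-2+n)<\nu_1(p)$, so no admissible eigenvector exists and the index reduces to the Leray-Schauder degree. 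The main technical obstacle I foresee is the exclusion for $i\ge 2$: carefully exploiting that $\psi_i$ changes sign in $r$ while the angular sum is $r$-independent is what forces the angular coefficients to vanish and rules out these otherwise-delicate cases.
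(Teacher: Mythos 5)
Your framework is the paper's: Dancer's formula reduces everything to deciding when property $\alpha$ holds, i.e.\ when some eigenvalue $\mu>1$ of $T'_{u_p}$ has an eigenvector in $\overline{W^+}\setminus W^0=K_n\setminus E_n$, and the answer is governed by the first singular eigenvalue. But two steps of your execution have genuine gaps. First, the functions $\psi_i(|x|^{(2+\a)/2})Y_j$ built from the singular eigenfunctions of \eqref{eigenvalue-problem} are \emph{not} eigenvectors of $T'_{u_p}$: an eigenvector with eigenvalue $\mu=1/t$ solves $-\Delta v=tp|x|^{\a}|u_p|^{p-1}v$, whose radial part is an eigenfunction of a $t$-dependent problem, not $\psi_i$. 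The equivalence you call ``a routine extension of the proof of \eqref{tag-2-H}'' --- that such a $\mu>1$ exists in the $j$-th harmonic sector iff $\nu_1(p)+\big(\tfrac{2}{2+\a}\big)^2 j(N-2+j)<0$ --- is exactly the content that needs proof. The paper supplies it by introducing the family $\Lambda_t$ of first eigenvalues of $-\Delta-tp|x|^{\a}|u_p|^{p-1}$ on the orthogonal complement $V$ of $W^0$, using that $t\mapsto\Lambda_t$ is strictly decreasing with $\Lambda_0>0$, and showing $\Lambda_1<0$ by an explicit computation of ${\mathcal Q}_p$ on $\psi_1(|x|^{(2+\a)/2})Y_n$; the intermediate value theorem then produces the $t\in(0,1)$ that property $\alpha$ requires. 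Without this homotopy you only have a sign of a quadratic form at $t=1$, not an actual eigenvalue $\mu>1$.

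Second, your exclusion of the pairs $(i,j)$ with $i\ge 2$ argues mode by mode, but $K_n$ is a cone, not a subspace, and eigenvalues of $T'_{u_p}$ can have multiplicity (already $N_j\ge 2$ within a single sector): a linear combination of modes can lie in $K_n\setminus E_n$ even when no single mode does, so ``no pure mode is admissible'' does not yield ``no eigenvector is admissible''. The paper sidesteps this entirely: any solution of \eqref{property-alpha} in $\overline{W^+}\setminus W^0$ has a nonzero projection onto $V$, which is again a solution; membership in $V\cap X_n$ forces the angular frequency to be a nonzero multiple of $n$, hence $j\ge n$, and then $\nu_1(p)\le\nu_i(p)$ closes the argument. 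In particular the $i\ge 2$ sign-change analysis you flag as the main technical obstacle is both applied to the wrong functions and unnecessary.
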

Here the symbol ${\mathrm{deg}}_{X_n}(I-T(p,\, ))$ stands for the Leray-Shauder degree of the operator $I-T(p,\, )$ restricted at $X_n$, computed in a neighborhood  of $(p, u_p)$ which does not contain nonradial solutions (this choice is possible since $u_p$ is nondegenerate by assumption).
	\begin{proof}
		Theorem 1 in \cite{D83} states that for isolated fixed points
		\[
		\mathrm{index}_{K_n}(p,u)= \begin{cases}
		0 &   \hbox{ if } T'(p,u)  \mbox{ has the property $\a$}, 
		\\[.2cm]	{\mathrm{deg}}_{X_n}(I-T(p,\, ))=\pm 1 & \mbox{ otherwise.}
		\end{cases}\]
	In this way, the prove reduces to show that the so-called {\em property $\alpha$} holds if and only $\nu_1(p)+\big(\frac{2}{2+\a}\big)^2 n (N-2+n)<0$.
		Several characterizations of the {\em property $\alpha$}  are provided in Lemma 3 and the following Remark in  \cite{D83}.
		To state the one which will be used here we need the sets
		\[\begin{array}{ll}
		W^+ := &\!\! \!\!  \{v\in X_n \ u_p+\gamma v\in K_n\ \text{ for some }\gamma>0\}, \\
		W^0 := &\!\! \!\! \{v\in W^+_{u_p}\ : -v\in  W^+_{u_p}\}, \\
		V   &\!\! \!\! \mbox{ the orthogonal (in the $H^1_0$ sense) complement to $W^0$ in $X_n$.}
		\end{array}\]
		Notice that the functions in $W^0$ do not depend by the angle $\theta$.
		Next,    $T'$ has the {\em property $\a$} if 
		there exists $t\in (0,1)$ such that the problem  
		\begin{equation}\label{property-alpha}
		\begin{cases} 
		-\Delta v = tp |x|^{\a}|u_p|^{p-1}v & \text{ in } B , \\
		v\in \overline{W^+}\setminus W^0  & 
		\end{cases}\end{equation}
		has a solution.
		We follow the proof of \cite[Theorem 1]{D92} and look at the family of eigenvalue problems
		\begin{equation}\label{lambdat}
		\begin{cases} -\Delta v -t p |x|^{\a} |u_p|^{p-1}v = \Lambda v & \text{ in } B , \\
		v\in V & \end{cases}
		\end{equation}
		and  let $\Lambda_t$ be its first eigenvalue. 
		When $t=0$ \eqref{lambdat} reduces to an eigenvalue problem for the Laplacian and certainly $\L_0>0$.
		When $t=1$, instead,  \eqref{lambdat} gives back the eigenvalue problem \eqref{standard-eig-prob}, but only eigenfunctions in $V$ matter.
		Furthermore the variational characterization yields that the first eigenvalue $\Lambda_t$ is strictly decreasing w.r.t.~$t$.
		\\
		If $T'$ has the property $\alpha$, then $\Lambda_t\le 0$ for some $t<1$ and therefore $\Lambda_1<0$. This in turn means that the eigenvalue problem \eqref{standard-eig-prob}  has a negative eigenvalue with related eigenfunction in $V$ and then $\nu_i(p)+ \big(\frac{2}{2+\a}\big)^2 j (N-2+j)<0$ for some $i=1,\dots m$ and $j$ such that the related spherical harmonic belongs to $V$, by the characterization in \cite[Theorem 1.4]{AG-sing-1}.
Taking advantage from  the description of the spherical harmonics given in the proof of Theorem 1.1 in \cite{AG-bif}, one sees that  $j$ must be a multiple of $n$ and so, in particular, $\nu_1(p)+ \big(\frac{2}{2+\a}\big)^2 n (N-2+n)<0$.
		\\
		On the other hand if $\nu_1(p)+\big(\frac{2}{2+\a}\big)^2 n (N-2+n)<0$ we let $\psi$ be the first radial eigenfunction for \eqref{eigenvalue-problem} and $Y_n$ the spherical harmonic related to $n (N-2+n)$ belonging $V$ (which does exist for what we have said before). Now $v (r,\theta,\phi)= \psi(r^{\frac{2+\a}{2}}) \, Y_n(\theta, \phi)$ is in $V$ and an easy computation shows that 
		\begin{align*}
		\int_B\left(|\nabla v|^2 - p|x|^{\a}|u_p|^{p-1} v^2\right) dx = 
		\\		=	\int_0^1 r^{N-1} \left[ \left(\frac{d}{dr}\psi\big(r^{\frac{2+\a}{2}}\big)\right)^2  - p r^{\a}  |u_p|^{p-1} \left(\psi(r^{\frac{2+\a}{2}}) \right)^2 \right] dr 
		\int_{\mathbb S_{N\!-\!1}} Y_n^2  d\sigma(\theta,\varphi) \\
	+ 	\int_0^1 r^{N-3} \left(\psi\big(r^{\frac{2+\a}{2}}\big) \right)^2  dr  
	\int_{\mathbb S_{N\!-\!1}}  | \nabla Y_n|^2 d\sigma(\theta,\varphi) 
		\\
			= \int_0^1 r^{N-1+\a} \left[ \left(\frac{2+\a}{2}\right)^2 \left(\psi'(r^{\frac{2+\a}{2}}) \right)^2  - p   |u_p|^{p-1} \left(\psi(r^{\frac{2+\a}{2}}) \right)^2 \right] dr
		\int_{\mathbb S_{N\!-\!1}} Y_n^2  d\sigma(\theta,\varphi) \\
		+ 	\int_0^1 r^{N-3} \left(\psi\big(r^{\frac{2+\a}{2}}\big) \right)^2  dr  
	\int_{\mathbb S_{N\!-\!1}}  | \nabla Y_n|^2 d\sigma(\theta,\varphi) 
			\intertext{using the change of variable $t=r{\frac{2+\a}{2}}$ and the notation in \eqref{ap}}
			 =	\frac{2+\a}{2} \int_0^1 t^{M-1} \left[ \left(\psi'(t) \right)^2  - a_p(t) \left(\psi(t) \right)^2 \right] dt
		\int_{\mathbb S_{N\!-\!1}}Y_n^2  d\sigma(\theta,\varphi) \\
				+ 	\frac2{2+\a}	\int_0^1 t^{M-3} \psi^2(t) \, dt 
			\int_{\mathbb S_{N\!-\!1}}  | \nabla Y_n|^2 d\sigma(\theta,\varphi) 
		\intertext{and as $\psi$ solves \eqref{eigenvalue-problem} and $Y_n$ is a spherical harmonics we get}
		 =   \frac{2+\a}{2} \nu_1(p)\int_0^1 t^{M-3}  \psi^2(t) \,  dt
		 \int_{\mathbb S_{N\!-\!1}} Y_n^2  d\sigma(\theta,\varphi) \\
		 + \frac{2}{2+\a}	 n (N-2+n)		\int_0^1 t^{M-3} \psi^2(t) \, dt 
		 \int_{\mathbb S_{N\!-\!1}} Y_n^2 d\sigma(\theta,\varphi) 
			\\	=  \frac{2+\a}{2} \left( \nu_1(p)+\Big(\frac{2}{2+\a}\Big)^2 n (N-2+n)\right)\int_0^1 t^{M-3}  \psi^2(t) \, dt
		\int_{\mathbb S_{N\!-\!1}} Y_n^2  d\sigma(\theta,\varphi) \\
		=  \left( \nu_1(p)+\Big(\frac{2}{2+\a}\Big)^2 n (N-2+n)\right) \int_B \frac{v^2}{|x|^2} dx  <0 .
			 	\end{align*}
		Hence the first eigenvalue $\L_1$ is negative, and since $\L_0>0$ there exists $t\in(0,1)$ such that $\Lambda_t=0$, which means that $T'$ has the property $\alpha$.
	\end{proof}

Relying on Lemma \ref{soloi=1}  one can see that a sufficient condition for bifurcation is 
		\begin{equation}\label{lem:cedelbuono}	\left( \lim\limits_{p\to 1}\nu_1(p)+\Big(\frac{2}{2\!+\!\a}\Big)^2 n(N\!-\!2\!+\!n)\right) \left(  \lim\limits_{p\to p_{\a}}\nu_1(p)+\Big(\frac{2}{2\!+\!\a}\Big)^2 n(N\!-\!2\!+\!n)\right)< 0,	
		\end{equation}
	for some integer $n$.


\begin{proposition}\label{prop:bif} 
	 If $n$ is an integer which fulfills \eqref{lem:cedelbuono}, then there exists at least one $p_n \in (1, p_{\a})$ such that $(p_n, u_{p_n})$ is a nonradial bifurcation point and  
 the branch $\mathcal C_n$ defined according to \eqref{cn} is global, in the sense that it contains a sequence $(p_k,u_k)$ with
\begin{enumerate}[i)]
	\item either $\|u_k\|_{C^{1,\gamma}(B)}\to +\infty$,
	\item or $p_k\to p_{\a}$, 
	\item or $p_k\to 1$.
	\end{enumerate}
\end{proposition}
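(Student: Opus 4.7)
The plan is to combine the cone-index computation of Lemma \ref{soloi=1} with a degree-theoretic global bifurcation argument in the cone $K_n$, adapting the approach of \cite{AG-bif}.

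First, I would use Lemma \ref{lem:analiticita} together with Proposition \ref{prop:degeneracy} to ensure that $u_p$ is nondegenerate, and hence an isolated fixed point of $T(p,\cdot)$ in $X_n$, outside a discrete subset of $(1,p_{\a})$. Choosing $p^->1$ close to $1$ and $p^+<p_{\a}$ close to $p_{\a}$ with $u_{p^\pm}$ nondegenerate, the sign-change assumption \eqref{lem:cedelbuono} and Lemma \ref{soloi=1} give
\[ \mathrm{index}_{K_n}(p^-,u_{p^-}) \;\neq\; \mathrm{index}_{K_n}(p^+,u_{p^+}), \]
one value being $0$ and the other $\pm 1$. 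If the arc $\{(p,u_p)\colon p\in[p^-,p^+]\}$ admitted a uniform tubular neighborhood in $K_n$ disjoint from any other fixed point of $T$, homotopy invariance of the cone index would force these two indices to coincide, a contradiction. Hence some $p_n\in(p^-,p^+)$ is a nonradial bifurcation point, i.e.\ $(p_n,u_{p_n})\in\mathcal S^m\cap\Sigma_n^m$.

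For the global property of $\mathcal C_n$, I would invoke a Whyburn/Rabinowitz-type alternative adapted to the cone setting: the closed connected component $\mathcal C_n$ either leaves every compact subset of $(1,p_{\a})\times C^{1,\g}_0(B)$, or it is contained in such a compact subset, in which case a standard homotopy argument on a tubular neighborhood of $\mathcal C_n\cap\mathcal S^m$ implies that the sum of the cone-index jumps of $(p,u_p)$ across the degeneracy points sitting on $\mathcal C_n\cap\mathcal S^m$ must vanish. Because by Lemma \ref{soloi=1} each such jump is in $\{-1,0,+1\}$ and is controlled entirely by the sign of $\nu_1(p)+\big(\frac{2}{2+\a}\big)^2 n(N-2+n)$, one can pick $p_n$ at an endpoint of a maximal subinterval of $[p^-,p^+]$ on which that sign is constant so as to rule out cancellation. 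Consequently $\mathcal C_n$ must leave every compact subset, which is exactly the statement that it contains a sequence $(p_k,u_k)$ with $\|u_k\|_{C^{1,\g}}\to\infty$, or $p_k\to p_{\a}$, or $p_k\to 1$.

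The main obstacle is this last step, i.e.\ excluding the \emph{bounded loop} alternative, where $\mathcal C_n$ would close up onto $\mathcal S^m$ at one or several additional degeneracy points with index jumps that algebraically compensate the jump at $p_n$. Verifying that, thanks to the rigid dichotomy $\{0,\pm1\}$ of the cone-index provided by Lemma \ref{soloi=1}, the choice of $p_n$ described above always produces a surplus of index that cannot be cancelled, and hence forces the continuum to escape every compact set, is the delicate point; the rest of the argument is a routine adaptation of the reasoning in \cite[Section 4]{AG-bif}.
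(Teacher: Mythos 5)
Your existence argument for a bifurcation point is sound and essentially the paper's: the paper also combines Lemma \ref{lem:analiticita} with Lemma \ref{soloi=1}, although it localizes more sharply, using analyticity to produce a point $\bar p$ where $\nu_1(\bar p)=-\big(\frac{2}{2+\a}\big)^2 n(N\!-\!2\!+\!n)$ with a strict sign change of $\nu_1(p)+\big(\frac{2}{2+\a}\big)^2 n(N\!-\!2\!+\!n)$ across $\bar p$ and no other degeneracy in a punctured neighbourhood, so that the cone index jumps between $0$ and $\pm 1$ exactly there; the global part is then delegated to \cite[Theorem 1.2]{AG-bif} and to \cite{Gla-glob}.

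The genuine gap is in your treatment of the bounded-loop alternative, and the mechanism you propose to close it does not work. Choosing $p_n$ at an endpoint of a maximal interval on which $\nu_1(p)+\big(\frac{2}{2+\a}\big)^2 n(N\!-\!2\!+\!n)$ has constant sign does not prevent the jump at $p_n$ from being cancelled by the jump at the adjacent sign change: a compact component joining two consecutive crossing points, carrying jumps $+1$ and $-1$, is perfectly consistent with the vanishing of the sum of the jumps on that component, so no \textquotedblleft surplus of index\textquotedblright\ is produced by this choice. (Moreover the jumps are not confined to $\{-1,0,+1\}$: at a degeneracy point where $\nu_1$ stays above the threshold but ${\mathrm{deg}}_{X_n}(I-T(p,\cdot))$ changes sign, the cone index jumps by $\pm 2$.) The argument that actually rules out that \emph{every} relevant component be compact is global rather than local: hypothesis \eqref{lem:cedelbuono} together with Lemma \ref{soloi=1} forces $\mathrm{index}_{K_n}(p^-,u_{p^-})$ and $\mathrm{index}_{K_n}(p^+,u_{p^+})$ to be $0$ and $\pm 1$ in some order, hence the algebraic sum of \emph{all} the index jumps along $\mathcal S^m$ equals $\pm1\neq 0$; since every jump point is a bifurcation point and each compact component of $\Sigma^m_n$ contributes $0$ to this sum (by the isolating-neighbourhood homotopy you describe), at least one bifurcation point must lie on a non-compact component, which is precisely the alternative i)--iii). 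Note that this delivers the existential form of the statement (there exists at least one $p_n$ whose branch is global), not that the branch through an arbitrarily prescribed crossing point is global.
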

\begin{proof}
Under assumption \eqref{lem:cedelbuono}, and thanks to Lemma \ref{lem:analiticita}, there exists at least one (and an odd number of)  $\bar p\in (1, p_{\a})$ and $\delta>0$ such that 
\[\begin{array}{c}
{\nu}_1(\bar p) =  - \left(\frac{2}{2+\a}\right)^2 n (N-2+n) , \\
\left({\nu}_1(\bar p\!-\!\delta) + \big(\frac{2}{2+\a}\big)^2 n (N\!-\!2\!+\!n) \right) \left({\nu}_1(\bar p\!+\!\delta) + \big(\frac{2}{2+\a}\big)^2 n(N\!-\!2\!+\!n) \right) <0 , 
\\
\nu_{i}(p)\neq - \left(\frac{2}{2+\a}\right)^2 j (N-2+j)  
\end{array} 	\]
for every $i=1,\dots m$, $j\ge 0$, and $p\in(\bar p-\delta,\bar p+\delta)$, $p\neq \bar p$.
\\
Lemma \ref{soloi=1} then implies that the Leray Schauder degree in the cone $K_n$ changes and the remaining of the proof follows as in \cite[Theorem 1.2]{AG-bif}.
See also \cite{Gla-glob}, where a more detailed proof is given in the case of positive solutions.
\end{proof}

We are now ready to prove the bifurcation results stated in Section \ref{sec:stat}.
Concerning positive solutions, we have already pointed out  that  the Morse index near at $p_{\a}$ is strictly greater than the one near at $1$, for every $\a>0$.
Since only the first eigenvalue $\nu_1(p)$ is negative and gives a contribution to the Morse index, it is clear that there exists at least one value of $n$ such that  \eqref{lem:cedelbuono} holds. Let us complete the proof of Theorem \ref{teo:bif-H-1}.

\begin{proof}[Proof of Theorem \ref{teo:bif-H-1}]
First, we  check that \eqref{lem:cedelbuono} is fulfilled for every integer $n=1,\dots \lceil{\a}/{2}\rceil$.
Recalling that 	$  \lim\limits_{p\to 1}\nu_1(p) =0$ by \eqref{nup=1} (since for positive solutions $\beta_1=\frac{N-2}{2+\a}$), it is equivalent to see that 
\[\lim\limits_{p\to p_{\a}}\nu_1(p) < - \Big(\frac{2}{2+\a}\Big)^2 n(N-2+n)\]  
for $1\le n<\frac{2+\a}{2}$, i.e.
\[ \lim\limits_{p\to p_{\a}}\nu_1(p) \ge  -  \frac{2N-2+\a}{2+\a} .\]
But \eqref{nu-p=palpha} and \eqref{nu-p=infty-1} state that equality holds in any dimension $N\ge 2$.

Therefore Proposition \ref{prop:bif} gives the first part of the claim. 
As for property {\it i)}, every branch $\mathcal C_n$ must be composed of nonnegative solutions by continuity, so that maximum principle ensures that or they are  positive, or they are identically zero. But this last occurrence is not allowed since the trivial solution is isolated.
Besides the same Proposition \ref{prop:bif} states that $\mathcal C_n$ contains a sequence $(p_k, u_{k})$ such that either $\|u_k\|\to \infty$, or $p_k\to p_{\a}$, or $p_k\to 1$.
Moreover the occurrence $p_k\to 1$ is forbidden by the uniqueness of positive solutions for $p$ close to $1$ in \cite[Theorem 3.1]{AG-bif} (which can be easily extended also to dimension $N=2$).
\\
As for the possible  intersection between two branches $\mathcal C_n$ and $\mathcal C_{n'}$, it has to be  composed by $(p, v)$  such that $v\in K_n\cap K_{n' }$ is a positive solution to \eqref{H}.
In dimension $N=2$ $K_n\cap K_{n' }$ reduces to radial function, and therefore $v=u_p$ is a radial positive solution to \eqref{H}, which has to be degenerate and therefore isolated.
In dimension $N\ge 3$, instead, $K_n\cap K_{n' }$ contains also functions which are nonradial, but do not depend by the angle $\theta$.
%
	\end{proof}


\

After we deal with bifurcation from nodal solutions, and we begin by examining the planar case.

\begin{proof}[Proof of Theorem \ref{teo:bif-H-m-N=2}]
First, we compute the values of the integer $n$ for which  \eqref{lem:cedelbuono} holds.  
Thanks to  \eqref{nup=1} and \eqref{nu-p=infty-2}, it  means that
\[ -\beta^2= \lim\limits_{p\to 1} \nu_1(p) > -\left(\frac{2n}{2+\a}\right)^2>  \lim\limits_{p\to \infty} \nu_1(p)=-\kappa^2,\] 
which is clearly equivalent to   $\frac{2+\a}{2}\beta <n<\frac{2+\a}{2}\kappa$, i.e. $n= \left[\frac{2+\a}{2}\beta +1 \right] , \dots \left\lceil\frac{2+\a}{2}\kappa-1\right\rceil$.
Here $\beta$ and $\kappa$ are respectively given by \eqref{beta} and \eqref{nu-p=infty-2}.
\\
So Proposition \ref{prop:bif} yields that, for any of such values of $n$, there exist a nonradial bifurcating point $(p_n, u^2_{p_n})$ and a bifurcating branch in $K_n$. 
The local property mentioned {\it i)} is a plain consequence of the continuity of the branch.
As for property {\it ii)},  Proposition \ref{prop:bif}  states that the branch $\mathcal C_n$ contains a sequence $(p_k, u_{k})$ such that either $\|u_k\|_{C^{1,\gamma}}\to \infty$, or $p_k\to \infty$, or $p_k\to 1$. If $p_k\to 1$ but $\|u_k\|_{C^{1,\gamma}}$ stays bounded,  then \cite[Lemma 2.1]{Ama} ensures that $u_k$ converges to an eigenfunction of \eqref{prima-autof-weight}.
Coming to  property {\it iii)}, since $K_n\cap K_{n'}$ is the set of radial functions, then  the intersection point between  two branches $\mathcal C^2_n$ and $\mathcal C^2_{n'}$  should be another nonradial bifurcation point  $(p, u^m_p)$. Here possibly $m\neq 2$, because  the number of nodal zones of the bifurcating solutions could become larger that $2$, far from the bifurcation point. If this happens, then the very definition of $\mathcal C^2_n$ yields that $\mathcal S^m \subset \mathcal C^2_n$ and this concludes the proof.
\end{proof}

The proof of the bifurcation in higher dimensions is quite similar.
\begin{proof}[Proof of Theorem \ref{teo:bif-H-m}]
Let us check that property \eqref{lem:cedelbuono} holds as long as 
$ \frac{2+\a}{2} < n < \frac{2+\a}{2} \beta_1 - \frac{N-2}{2}$, where $\beta_1$ is defined in \eqref{betai-def}.
We have already noticed that now the first  singular eigenvalue  close to $p=p_{\a}$ is greater than close to $p=1$, therefore it is needed that
\[ \lim\limits_{p\to p_{\a}} \nu_1(p) > -\left(\frac{2}{2+\a}\right)^2n(N-2+n) > \lim\limits_{p\to 1} \nu_1(p)  .\]
	By \eqref{nup=1} and \eqref{nu-p=palpha}  it means that  
	\[ \frac{2N-2+\a}{2+\a} < \left(\frac{2}{2+\a}\right)^2 n(N-2+n) < \b_1^2 - \left(\frac{N-2}{2+\a}\right)^2  ,\]
which can be rearranged into
		\[ 1 + 2\frac{N-2}{2+\a}< \left(\frac{2n}{2+\a}\right)^2 + 2 \frac{2n}{2+\a}\frac{N-2}{2+\a} < \b_1^2 - \left(\frac{N-2}{2+\a}\right)^2 ,\]
		which in turn, after adding the term $\Big(\frac{N-2}{2+\a}\Big)^2$ to every member and extracting square roots, becomes 
			\[ \frac{N+\a}{2+\a} < \frac{N-2+2n}{2+\a}  < \b_1  ,\]
		i.e.
			\[ \frac{2+\a}{2}< n<  \frac{2+\a}{2} \beta_1 - \frac{N-2}{2}  \]
		as claimed.	It means that \eqref{lem:cedelbuono} is fulfilled by $n= 2 + \left[\frac{\a}{2}\right], \dots n_\a^m$, where $n_\a^m$ is defined in \eqref{n-def}.
 Remembering that  $n_\a^m \ge 2(m-1)+ [ \a (m-1)$, see \eqref{n-unico-controllo}, one can see that 
 \eqref{lem:cedelbuono} holds for at least $2m-3 + [\a(m-1)] - \left[\frac{\a}{2}\right] $ different values of $n$.

	Eventually the conclusion follows by Proposition \ref{prop:bif}	, arguing as in the proof of Theorem \ref{teo:bif-H-m-N=2}. The only difference stands in the possible overlapping between branches, i.e. property {\it iii)}, which in higher dimension can also contain nonradial solutions which do not depend by the angle $\theta$. 
\end{proof}

\end{document}


We begin spending some words on the positive radial solution to the H\'enon equation.
In this particular case  we have already pointed out 

\remove{\begin{corollary}\label{good-change-m=1-N=2}
		Let $\a>0$.	In dimension $N=2$ and for $m=1$, for every $n = 1, there exists $p_n>1$ such that Property \ref{good-change} holds at $p_n$. Consequently 	there are at least $\lceil\a/2\rceil$ different values of the parameter $p$ such that Property \ref{good-change} holds.
		\end{corollary}
		\begin{proof} In this particular case the hypothesis of Lemma \ref{lem:cedelbuono} reads 
		\[0= > -\left(\frac{2n}{2+\a}\right)^2>  \lim\limits_{p\to \infty} \nu_1(p)=-1\] for some $n\in \mathbb N$. It is clear that it is equivalent to $, which gives the claim since $\a>0$. \end{proof}
}

Next,  we deal with the minimal energy nodal radial solution, i.e.~the radial solutions with two nodal zones. 
By the results in 	\cite{DIP-N=2}  and  \cite{AG-N=2} we know that there exists a non rational number $\kappa$ whose approximate value is 
\begin{equation}\label{kappa}
\kappa\approx 5,\!1869
\end{equation} such that 
\begin{align}\label{nu-p=infty-N=2}
\lim\limits_{p\to \infty}\nu_1(p) =-\kappa^2  , \qquad \lim\limits_{p\to \infty}\nu_2(p)  =-1  \ \text{ from above,} \\
\label{morsep=infty}	m(u_p) = 2\left\lceil\frac{2+\a}{2} \kappa\right\rceil  + 2\left\lceil\frac{\a}{2} \right\rceil \quad \text{ for large $p$,}
\end{align}	
for every value of $\a\ge 0$ except the sequence $\a'_n= 2(n/\kappa -1)$.
Otherwise for $\a=\a'_n$ it is only known that
\begin{equation}\label{morsep=infty-ecc}
(2+\a) \kappa  +  2\left\lceil\frac{\alpha}{2}\right\rceil\le m(u_p^2)\le  (2+\a) \kappa+  2\left\lceil\frac{\alpha}{2}\right\rceil+ 2.
\end{equation}
Comparing these values with the one for $p$ near at $1$ computed in Remark \ref{rem:N=2}, one can see  that the Morse index for $p$ near to $1$ is strictly less that the one for large values of $p$.

\remove{	\begin{corollary}\label{cambio-morse-N=2}
		In dimension $N=2$ and for $m=2$, for every value of $\a\ge 0$ there exist $1<\bar p< \bar q$ such that 
		we have
		\[ m(u_p)  < m(u_q)  \qquad \text{ as } 1<p<\bar p \text{ and } q> \bar q. \]
	\end{corollary}
	\begin{proof}
		By \eqref{morsep=1-N=2}, or \eqref{morsep=1-N=2_ecc} we know that
		\[m(u_p)  \le 2 \left[\frac{2+\a}{2}\beta_1\right] + 2 \le (2+\a)\beta_1 + 2 \qquad \text{ for } 1<p<\bar p \]
		and similarly \eqref{morsep=infty} and  \eqref{morsep=infty-ecc} yield
		\[m(u_q)  \ge 2 \left\lceil\frac{2+\a}{2}\kappa\right\rceil +   2\left\lceil\frac{\a}{2} \right\rceil  
		> (2+\a)\kappa +\a -2 \qquad \text{ for } q >\bar q . \]
		Hence 
		\begin{align*}
		m(u_p)  -  m(u_q) 
		< (2+\a) ( \beta_1 - \kappa )  -\a + 4  \underset{\beta_1<\kappa}{\le} 2 ( \beta_1 - \kappa) +4  \approx -1,7686 .
		\end{align*}
\end{proof} }

Moreover putting together the limits computed in \eqref{nup=1} and \eqref{nu-p=infty-N=2} with Lemma \ref{lem:analiticita} we get that
\begin{corollary}\label{good-change-N=2}\edz{\color{red}riguardare questa}
	Let $\a\ge 0$.	In dimension $N=2$ and for $m=2$,  for every $n=\left[\frac{2+\a}{2}\beta_1\right] + 1, \dots \left\lceil\frac{2+\a}{2}\kappa\right\rceil -1$ there exists $p_n>1$ such that Property \ref{good-change} holds at $p_n$. Consequently 	there are at least {\AL$2+\lceil \a\rceil$???} different values of the parameter $p$ such that Property \ref{good-change} holds.
\end{corollary}
Here $\beta_1$ and $\kappa$ are the fixed numbers introduced in \eqref{beta1} and \eqref{kappa}, respectively.
\begin{proof}
\end{proof}

\remove{For the Lane-Emden equation Corollary \ref{good-change-N=2} states that  Property \ref{good-change} holds for  $j=3,4,5$
	For the H\'enon equation  the indexes $j$ which give rise to a change in the Morse index depend on the value of $\a$, and specifically on the two sequences $\a_n=2(n/\beta_1-1)$ and $\a'_n=2(n/\kappa-1)$. The picture is not trivial, indeed on can see that $\a'_6<\a_3<\a'_7<\a'_8<\a_4<\a'_9<\a'_{10}<\a'_{11}<\a_5\dots$, so that the values of $j$ where Property \ref{good-change} holds are
	
	\begin{tabular}{ll}
		$j=3,4,5$ \quad & as $0\le \a\le \a'_6$, \\
		$j=3,4,5,6$ \quad & as $\a'_6< \a < \a_3$, \\
		$j=4,5,6$ \quad & as $\a_3\le \a\le \a'_7$, \\
		$j=4,5,6,7$ \quad & as $\a'_7< \a < \a'_8$, \\
		$j=4,5,6,7,8$ \quad & as $\a'_8< \a < \a_4$, 
	\end{tabular}
	
	and so on.}


\subsection{The higher dimensional case}\label{S:N>3}
The positive solution to the H\'enon equation in higher dimension behaves like   the one in dimension two, since \eqref{nup=1-m=1} continue to hold, while it has been showed in \cite[numero proposizione]{AG-N>3} that 
\begin{align}\label{nu-p=infty-N>3-m=1} 
\lim\limits_{p\to p_{\a}}\nu_1(p) = -\frac{2N-2+\a}{2+\a} \quad & \text{ from above} , \\
\label{morse-p=infty-N>3-m=1}  m(u_p)= 1+\sum\limits_{j=1}^{\lceil{\a}/{2}\rceil} N_j  \quad & \text{ as $p$ is Next,  to $p_{\a}$.}
\end{align}  
So the Morse index increases for every $\a>0$. 
For such solutions a bifurcation result has already been obtained in \cite{AG14} under the assumption $0<\a<1$. Such technical assumption can actually be removed, by taking advantage from the characterization of the degeneracy points recalled in Proposition \ref{prop:degeneracy}. Indeed it is easy to see that
\begin{corollary}\label{good-change-m=1-N>3}
	Let $\a>0$.	In dimension $N\ge 3$ and for $m=1$, for every $n=1,\dots  \lceil\a/2\rceil$, there exists $p_n>1$ such that Property \ref{good-change} holds at $p_n$. Consequently 	there are at least $\lceil\a/2\rceil$ different values of the parameter $p$ such that Property \ref{good-change} holds.
\end{corollary}
\begin{proof}
	It suffices to check that $ \lim\limits_{p\to 1} \nu_1(p) > -\left(\frac{2}{2+\a}\right)^2n(N-2+n)>  \lim\limits_{p\to \infty} \nu_1(p)$ for some integer value of $n$.
	By \eqref{nup=1-m=1} and \eqref{nu-p=infty-N>3-m=1} it means that  
	\[ 0< \left(\frac{2}{2+\a}\right)^2 n(N-2+n) < \frac{2N-2+\a}{2+\a} .\]
	This inequality can be rearranged as
	\[ 0< \left(\frac{2n}{2+\a}\right)^2 + 2 \frac{2n}{2+\a}\frac{N-2}{2+\a} < 1 + 2\frac{N-2}{2+\a}, \]
	and then
	\[ \left(\frac{N-2}{2+\a} \right)^2 < \left(\frac{2n+N-2}{2+\a} \right)^2 < \left(\frac{N+\a}{2+\a} \right)^2 \]
	which means 
	\[\frac{N-2}{2+\a}  < \frac{2n+N-2}{2+\a} < \frac{N+\a}{2+\a} \]
	or, better,
	\[ 0< n< 1+\frac{\a}2,\]
	which holds for $n=1,\dots \lceil\a/2\rceil$  since $\a>0$.
\end{proof}

So the computation of the Morse index at the infimum of the existence range that is performed here completes the picture and allows to show that,  for any given value of $\a\ge 0$,  there exist both nonradial solutions that bifurcate from the curve $p\mapsto u_p$, and also nonradial solutions which live in a  neighborhood of $p=1$.
This last fact is peculiar of nodal solutions, since by \cite{AG14} it is known that for $p$ Next,  to one there exists only one positive solution, namely the radial one.

But let us go in order.
The papers \cite{BBGvS} and \cite{Gro09} investigated  the Lane-Emden problem settled in any domain $\Omega$, when $p$ approaches 1,
and described the behaviour of the solutions in terms of the eigenvalues of the Laplace operator on $\Omega$, i.e. 
\begin{equation}\label{prima-autof}
\begin{cases}
-\Delta \omega = \mu  \, \omega \quad & \text{ in } \Omega , \\
\omega= 0 & \text{ on } \partial\Omega ,
\end{cases}
\end{equation}
showing, among other things, that any sequence of solutions  whose norm in $L^2(\Omega)$ is suitably bounded converges (up to a subsequence) to an eigenfunction of \eqref{prima-autof}.
The radial setting is clearly simpler, and one can see that any radial solutions satisfies the $L^2$ bound as $p$ is Next,  to 1, and a sequence of radial solutions with $m$ nodal zones converges to the $m^{th}$ radial eigenfunction of \eqref{prima-autof}, which is simple and is nothing else that a Bessel function.
Analogous result holds for the H\'enon problem \eqref{H}, provided that the eigenvalue problem for the Laplacian is replaced by the weighted eigenvalue problem
\begin{equation}\label{prima-autof-weight} 
\left\{
\begin{array}{ll}
-\Delta \omega= \mu |x|^{\a} \omega & \text{ in } B, \\
\omega = 0 & \text{ on } \partial B ,  
\end{array}\right.
\end{equation}
which clearly reduces to \eqref{prima-autof} when $\a=0$ and $\Omega$ is a ball.
Our first result, presented in Section \ref{S:1}, stands in computing the eigenvalues and eigenfunctions of \eqref{prima-autof-weight} and describing the asymptotic behaviour of solutions as $p\to 1$.
\\
We denote by $\Gamma$  the Gamma-function, by $\jcal_{\frac{\n-2}{2+\alpha}}$ the Bessel function of first kind defined as 
\[\jcal_{\frac{\n-2}{2+\alpha}}(r) = r^{\frac{\n-2}{2+\alpha}}\sum\limits_{k=0}^{+\infty} \dfrac{(-1)^k}{k!\Gamma(k+1+\frac{\n-2}{2+\alpha})} \left(\frac{r}{2}\right)^{2k}, \quad r\ge 0 , \]
and by $z_n$ the sequence of its positive zeros.
We will prove that

\begin{theorem}\label{p1}
	Let $u^m_p$ be the radial solution to \eqref{H} with $m$ nodal zones and $u^m_p(0)>0$, as $\a\ge 0$.
	As $p\to 1$ we have  
	\begin{align}\label{a0}
	\| u^m_p\|_{\infty}^{\frac{p-1}{2}} & \to \frac{2+\a}{2} z_m,  \\
	\label{a0bis} \edz{\AL controllare le costanti}
	\frac{u^m_p(x)} {\| u^m_p\|_{\infty}} & \to \Gamma\left(\frac{N+\a}{2+\a}\right) |x|^{-\frac{\n-2}{2}}\jcal_{\frac{\n-2}{2+\alpha}}(z_m	|x|^{\frac{2+\alpha}{2}} ) 
	\\ \nonumber & \quad = \Gamma\left(\frac{N+\a}{2+\a}\right) \sum\limits_{k=0}^{+\infty} \dfrac{(-\frac{z_m^2}{4})^k  }{k!\Gamma(k+\frac{\n+\a}{2+\alpha})} |x|^{(2+\a) k}	\quad  \text{ in } C^2(B_1)  .
	\intertext{Moreover denoting by $0<r^m_{1,p}< \dots r^m_{m,p}=1$ the nodal radii of $u^m_p$ we have}
	\label{a2} r^m_{i,p} & \to \left(\frac{z_i}{z_m}\right)^{\frac{2}{2+\a}} \qquad \text{ as } i=1,\dots m-1. &
	\end{align}
\end{theorem}

Next,  we exploit the characterization of the Morse index  in terms of a singular Sturm-Liouville problem given in {\color{red}\cite{AG-sing-1}} and recalled here in Subsection \ref{S:1.1}. Thanks to the convergence established in Theorem \ref{p1}, we are able to pass to the limit also in that singular problem and compute the Morse index of $u_p$ in a right neighborhood of $p=1$.
To state the related result some more notation is needed.
For  every $\beta \ge 0$ and $i=1,\dots m$ we denote by $z_i(\beta)$  the $i^{th}$ positive zero of the Bessel function $\jcal_{\beta}$. 
Since the map $\beta\mapsto z_i(\beta)$ is continuous and increasing, there exist $\beta^m_i=\beta_i^m(\alpha, N)>0$  such that $z_i(\beta_i^m)$ (the $i^{th}$ zero of the Bessel function ${\mathcal J}_{\beta_i^m}$) coincides with $z_m=z_m(\frac{\n-2}{2+\alpha})$ (the $m^{th}$ zero of ${\mathcal J}_{\frac{\n-2}{2+\alpha}}$).

\begin{theorem}\label{teo:mi-p=1}
	For every $\a\ge 0$ except the sequences $\a_{\ell,n} = (2n+N-2)/\beta^m_{\ell} -2$ (as $\ell=1,\dots m-1$, $n\in{\mathbb N}$) there is $\bar p =\bar p(\a)>1$ such that for $p\in(1,\bar p)$ the Morse index of $u^m_p$ is given by
	\begin{equation}\label{mi-p=1}
	m(u^m_p)= 1+\sum_{i=1}^{m-1}\sum _{j=0}^{\left\lceil \frac{(2+\a)\beta^m_i -N}{2} \right\rceil} N_j  .
	\end{equation}
	Otherwise if $\a= \a_{\ell,n}$ for some $\ell$ and $n$ as before, there is $\bar p >1$ such that for $p\in(1,\bar p)$ the Morse index is estimated by
	\begin{equation}\label{mi-p=1brutta} \begin{split}	1+\sum_{i=1}^{m-1}\sum _{j=0}^{\left\lceil \frac{(2+\a)\beta^m_i -N}{2} \right\rceil} N_j \le  m(u^m_p)	\le 1 +\sum_{i=1}^{m-1}\sum _{j=0}^{\left\lceil \frac{(2+\a)\beta^m_i -N}{2} \right\rceil} N_j + \sum\limits_{\ell} N_{\frac{(2+\a)\beta_n_{\ell} -N}{2}+1 }   .	\end{split}	\end{equation}
\end{theorem}
Here $\lceil s\rceil=\min\{n\in \mathbb Z \, : \, n\ge s\}$ denotes the ceiling function, and 
\[
N_j:=\begin{cases}
1 & \text{ when }j=0\\
\frac{(N+2j-2)(N+j-3)!}{(N-2)!j!} & \text{ when }j\geq 1
\end{cases}\]
is the multiplicity of the eigenvalue $\l_j:=j(N+j-2)$ for the Laplace-Beltrami operator in the sphere ${\mathbb S}_N$.
\\
In the particular case of positive solutions Theorem \ref{mi-p=1} recovers that $m(u^1_p)=1$, which is clearly true for the positive solution to the Lane-Emden equation (whose Morse index is equal to 1 for any value of the parameter $p$), and was already proved in \cite{AG14} for the H\'enon equation in dimension $N\ge 3$.
Coming to nodal solutions, the formula \eqref{morsep=1} is not totally explicit since the law $\beta\mapsto z_i(\beta)$ is not known. However the value of $z_i(\b)$ can be computed by a numerical procedure (for instance by the command \texttt{besselzero}  in MatLab), and by a dichotomy argument the approximated values of $\beta_i^m$  can be deduced.
\\
For the Lane-Emden equation the numerical approximation  suggests that  
$2(m-i)+\frac{N-2}{2} < \beta_i^m < 2(m-i)+\frac{N}{2}$, so that 
\eqref{morsep=1} becomes
\begin{align*} 
m(u^m_p) & = m+\sum\limits_{i=1}^{m}(1+m-i)(N_{2i-1}+N_{2i}) \quad & \text{for $p$ near at $1$,}
\intertext{ which simplifies into }
m(u^m_p) &= m(2m-1) \quad & \text{for $p$ near at $1$}.	
\end{align*}
in dimension $N=2$.
\\
In the plane, the approximation procedure is elementary also for $\a > 0$, because the baseline Bessel function is $\jcal_0$, whose zeros are tabulated.
For instance in the case of the least energy nodal radial solution, which is the radial solution with two nodal zone,  formula \eqref{morsep=1} becomes 
\[
m(u^2_p) = 2 \left\lceil\frac{2+\a}{2}\beta^2_1\right\rceil  , \quad \text{for $\beta^2_1 \approx 2,\!305$.}
\]

In particular the solution to the Lane-Emden equation ($\a=0$) with two Nodal zones has Morse index 6, as already noticed in \cite{GI}.
For small positive values of $\alpha$ the Morse index remains 6, while there is a critical value $\alpha_0\approx 0,6221$ above which the asymptotic Morse index increases to 8.

\

An interesting consequence of this computation is the existence of  nonradial nodal solutions in a neghborhood of $p=1$, {\color{darkgreen}at least in dimension $N=2$. 
	To describe the symmetries of this new solutions we use the polar coordinates  in the plane $(r,\theta)$  with $r=|x|$, and $\theta\in[-\pi,\pi]$ such that
	$(x_1,x_2)= (r\cos\theta, r\sin\theta)$.  For every positive integer $n$ we denote by $H^1_{0,n}$ the subspace of $H^1_{0}$ made up of that functions which are invariant for rotations of an angle $2\pi/n$, i.e.
	\begin{align*}
	H^1_{0,n} : = &  \left\{u\in    H^1_0  \, : \, u(r,\theta) \hbox{ is even and } {2\pi}/n  \hbox{ periodic  w.r.t. } \theta, \, \hbox{ for every } r\in (0,1) \right\}, 
	\intertext{and the cone made up of the functions in $H^1_{0,n}$ which are Holder-continuous and nonincreasing on a sector of amplitude $\pi/n$, i.e.	}
	K_n : =  &  \big\{u\in   H^1_{0,n} \cap C^{1,\gamma}(B)  \, : \,   u(r,\theta) \hbox{ is nonincreasing w.r.t.~ } \theta\in (0,\pi/n), \\
	& \qquad \qquad \quad \hbox{ for every } r\in (0,1) \big\}. \end{align*}
	
	The cones $K_n$ were introduced by Dancer in \cite{D92} to separate the branches of positive solutions in the two-dimensional annuli.
	Of course when $n=1$ 
	$K_1$ is made up of the functions which are \textquotedblleft foliated Schwartz symmetric\textquotedblright according to \cite{BWW}, up to rotation.
}

\begin{theorem}\label{teo:existence-N=2} 
	In dimension $N=2$ and for every $\a\ge 0$, there exists $\bar p = \bar p(\alpha)> 1$ such that  \eqref{H} has $\left\lceil\frac{2+\a}{2}\beta^2_1-1\right\rceil  \ge 2$ distinct nodal solutions for every $p\in (1,\bar p(\alpha))$. 
	Denoting them as $u^{\nod}_{n,p}$ with $n=1,\dots \left\lceil\frac{2+\a}{2}\beta^2_1-1\right\rceil$,  each $u^{\nod}_{n,p}$ belongs to 	$H^1_{0,n}$ and is a lest  energy $n$-invariant solution.	
\end{theorem}
Here $\beta^2_1\approx 2,\!305$ as before. 
The solutions are distinct meaning that they cannot be obtained from each other by symmetry, and of course they are  not one the opposite of the other. Actually  $u^{\nod}_{1,p}$ is the least energy nodal  solution, and from $n= 2$ onwards we have other nonradial solutions which minimize the energy among the rotationally invariant functions that change sign.
\edz{\color{red} confrontare con risultati multi-peak}
The constructive technique that brings to these new solutions, introduced in \cite{GI}, consists in producing minimal energy solutions with some prescribed rotational symmetry and comparing their Morse index with the one of the radial solution. The fact that this new solutions are different one from another, instead, follows from their asymptotic behaviour for $p$ close to one, which is described here in Theorem \ref{n-asympt}.???? 
{\color{red}	 This type of reasoning could in principle be extended to higher dimension, but in that case it is not known if such "new" solutions are really distinct one from another, because their symmetry properties are not completely clear, at the present state of the art.  }


The profile of solutions to the H\'enon problem 
\[
\left\{\begin{array}{ll}
-\Delta u = |x|^{\alpha}|u|^{p-1} u \qquad & \text{ in } B, \\
u= 0 & \text{ on } \partial B,
\end{array} \right.
\tag{\ref{H}}\]
is related to  a weighted eigenvalue problem for the Laplacian, namely
\[
\left\{
\begin{array}{ll}
-\Delta \omega= \mu |x|^{\a} \omega & \text{ in } B, \\
\omega = 0 & \text{ on } \partial B .  
\end{array}\right.
\tag{\ref{prima-autof-weight}}\]
Indeed it is easy to prove the following general fact.

\begin{lemma}\label{lem:conv-eigenv}
	Let $p_n\to 1$ and $u_n$ any nontrivial solution to \eqref{H} with $p$ replaced by $p_n$. If $\|u_n\|_{\infty}^{p_n-1}\le C$, then there exists an eigenvalue $\mu$ of \eqref{prima-autof-weight} with eigenfunction $\omega$ such that $\|\omega\|_{\infty}=1$ and  (up to an extracted sequence) 
	\begin{equation}\label{limite}
	\|u_n\|_{\infty}^{p_n-1}\longrightarrow \mu \quad  \text{ and } \quad  \frac{u_n}{\| u_n\|_{\infty}} \longrightarrow \omega  \ \text{  in $C^2(B)$ and in $C(B)$.}
	\end{equation}
\end{lemma}
\begin{proof}
	Certainly $\|u_n\|_{\infty}^{p_n-1}$ converges to a nonnegative number, say it $\mu$, up to an extracted sequence.
	Next,  $\bar{u}_n(x)=\frac {u_n(x)}{\| u_n\|_{\infty}}$ satisfies
	\begin{equation}\label{nonumber}
	\begin{cases}
	-\Delta \bar{u}_n=|x|^{\alpha}\| u_n\|_{\infty}^{p_n-1}|\bar{u}_n|^{p_n-1}\bar{u}_n & \hbox{ in }B,\\
	\bar{u}_n=0 & \hbox{ on }\partial B.
	\end{cases}
	\end{equation}
	and is nontrivial since $\|\bar u_n\|_{\infty}=1$. Hence $\|u_n\|_{\infty}^{p_n-1}$ cannot vanish (and so $\mu> 0$) because by maximum principle
	\[\bar u_n = (-\Delta)^{-1} |x|^{\alpha}\| u_n\|_{\infty}^{p_n-1}|\bar{u}_n|^{p_n-1}\bar{u}_n \le \| u_n\|_{\infty}^{p_n-1} (-\Delta)^{-1} (1) .\]
	
	Moreover 
	\begin{equation}
	\left(|\bar{u}_n|^{p_n-1}-1\right)\bar{u}_n \to 0 \ \text{uniformly.}
	\end{equation}
	Indeed for any fixed $n$ we have $\left(|\bar{u}_n|^{p_n-1}-1\right)\bar{u}_n =0$ if $\bar{u}_n= 0$, otherwise
	\begin{align}\label{per-dopo}
	|\bar{u}_n|^{p_n-1} -1= & (p_n-1) \log|\bar{u}_n| \int_0^1|\bar{u}_n|^{t(p_n-1)} dt 
	\end{align}
	so that 
	\begin{align*}
	\left|\left(|\bar{u}_n|^{p_n-1} -1\right)\bar{u}_n\right|\le& (p_n-1)\left|\log|\bar{u}_n|\int_0^1|\bar{u}_n|^{1+t(p_n-1)} dt \right| \\  \le& c (p_n-1) |\bar{u}_n|^{1/2} \le   c (p_n-1) .
	\end{align*}
	So $\bar u_n$ converges weakly  to a function $\omega$  that solves \eqref{prima-autof-weight} for $\mu = \lim \| u_n\|_{\infty}^{p_n-1}$, and by ellipticity $\bar u_n\to \omega$ in $C^2(B)$ and uniformly on $\overline B$. From this it also follows that $\|\omega\|_{\infty}=1$, concluding the proof of \eqref{limite}. 
\end{proof}

It is not hard to obtain a better asymptotic description which shall be of use later on.

\begin{corollary}\label{cor}
	Let $p_n\to 1$, $u_n$, $\mu$ and $\omega$  as in the previous Lemma, and define
	\[c=\dfrac{-\int_B|x|^{\a} \log| \omega| \, {\omega}^2 dx}{\int_B|x|^{\a} \omega^2 dx}.\]
	Then as $n\to \infty$ we have
	\begin{align}\label{weak-nod}
	\|u_n\|_{\infty}^{p_n-1} & = \mu \left( 1 + c (p_n-1) \right) + o(p_n-1) , \\
	\label{point-limit}
	\mu^{-\frac{1}{p_n-1}} u_n & \longrightarrow e^{c} \omega  \quad \text{ in } C(B).
	\end{align}
\end{corollary}
This facts have been proved in \cite{GI} in some particular cases, but their proof still work in wide generality. We report it here for the reader convenience.
\begin{proof}
	We use the same notations of previous Lemma.	To obtain \eqref{weak-nod}, we compute
	\begin{align*}
	\| u_n\|_{\infty}^{p_n-1}	\int_B |x|^{\a}|\bar{u}_n|^{p_n-1}\bar{u}_n  \omega dx  & = \frac{1}{	\| u_n\|_{\infty}}\int_B |x|^{\a}|{u}_n|^{p_n-1}{u}_n \omega dx =  \frac{1}{	\| u_n\|_{\infty}}\int_B \nabla u_n \nabla \bar\omega dx 
	\intertext{because $u_n$ solves \eqref{H}. Next,  using that $\omega$ solves \eqref{prima-autof-weight} we end up with}
	& =   \frac{\mu}{	\| u_n\|_{\infty}}\int_B |x|^{\a} u_n \omega dx = \mu \int_B |x|^{\a} \bar u_n \omega dx .
	\end{align*}
	Hence
	\begin{align*}
	\left(\| u_n\|_{\infty}^{p_n-1}-\mu \right) \int_B |x|^{\a} |\bar{u}_n|^{p_n-1} \bar u_n  \omega dx = \mu \int_B |x|^{\a} \left( 1 - |\bar{u}_n|^{p_n-1}\right)  \bar u_n \omega dx
	\\ 
	\underset{\eqref{per-dopo}}{=}  -\mu  (p_n-1) \int_B |x|^{\a} \log|\bar u_n| \int_0^1 |\bar u_n|^{t(p_n-1)} dt  \, \bar u_n \omega dx .
	\end{align*}
	Summing up we have 
	\begin{align*}
	\frac{\| u_n\|_{\infty}^{p_n-1}-\mu}{\mu (p_n-1)} = \frac{- \int_B |x|^{\a} \log|\bar u_n| \int_0^1 |\bar u_n|^{t(p_n-1)} dt  \, \bar u_n \omega dx }{\int_B |x|^{\a} |\bar{u}_n|^{p_n-1} \bar u_n  \omega dx }  \to c
	\end{align*}
	as $n\to \infty$.
	Indeed it is clear that $\int_B |x|^{\a} |\bar{u}_n|^{p_n-1}u_n \omega dx \to \int_B |x|^{\a}{ \omega}^2 dx$ by the uniform convergence of $\bar u_n$. Further 
	$\left| \log|\bar u_n| \int_0^1 |\bar u_n|^{t(p_n-1)} dt  \, \bar u_n\right| \le \sup\limits_{s\in(0,1]} |s\, \log s | < \infty$ and so the Dominated Convergence Theorem yields
	\[\int_B |x|^{\a} \log|\bar u_n| \int_0^1 |\bar u_n|^{t(p_n-1)} dt  \, \bar u_n \omega dx \to \int_B|x|^{\a} \log| \omega|\omega^2 dx.\]
	Next
	\begin{align*}
	\mu^{-\frac{1}{p_n-1}} u_n = \left(\frac{\|u_n\|_{\infty}^{p_n-1}}{\mu}\right)^{\frac{1}{p_n-1}} \bar u_n =  \left( 1 + c (p_n-1) + o(p_n-1) \right)^{\frac{1}{p_n-1}} \bar u_n 
	\longrightarrow e^{c} \omega 
	\end{align*}
	in $C(B)$ by \eqref{limite}.
\end{proof}
Let us compute explicitly the eigenvalues and eigenfunctions of \eqref{prima-autof-weight}, which are related to the Bessel function  of first kind
\begin{equation}\label{bessel}
\jcal_{\beta}(r) = r^{\beta}\sum\limits_{k=0}^{+\infty} \dfrac{(-1)^ik}{k!\Gamma(k+1+\beta)} \left(\frac{r}{2}\right)^{2k},\end{equation}
when $\beta= \frac{\n-2+2n}{2+\alpha}$ for some $n\in \N$. Here and henceforth we write 
\begin{enumerate}[-]
	\item $z_{i}(\beta)$ for the $i^{th}$ zero of $\jcal_{\beta}$, as $i\in\N$, $i\ge 1$,
	\item $\lambda_n= n(N-2+n)$ for the sequence of the eigenvalues of the Laplace Beltrami operator on ${\mathbb S}_{N-1}$, and $N_n= $ for its multiplicity
	\item $Y_{n,j}$ for the eigenfunctions of the Laplace Beltrami operator on ${\mathbb S}_{N-1}$, i.e. the spherical harmonics, as $n,j\in\N$, $j= 1, \dots N_n$.
\end{enumerate}
\begin{lemma}\label{lem:autov-peso}
	The eigenvalues  of \eqref{prima-autof-weight} are 
	\begin{align} \label{autov-peso}
	\mu_{n,i}=& \left(\frac{2+\alpha}{2} \, z_{i}\left(\frac{\n-2+2n}{2+\alpha}\right)\right)^2 
	\intertext{and the related eigenfunctions are } \label{autof-peso}
	\omega_{n,i}(x)= &  |x|^{-\frac{\n-2}{2}}\jcal_{\frac{\n-2+2n}{2+\alpha}}\left(z_{i}\left(\frac{\n-2+2n}{2+\alpha}\right)|x|^{\frac{2+\alpha}{2}}\right) Y_{n,j} \left(\frac{x}{|x|}\right) .
	\end{align}
\end{lemma}
\begin{proof}
	Let $\omega\in H^1_0(B)$ solve the equation in \eqref{prima-autof-weight}. We decompose it along the spherical harmonics and write
	\begin{equation}\label{autovpeso-decomposition}
	\omega(x)= \sum\limits_{n=0}^{\infty}\sum\limits_{j=1}^{N_n} \psi_{n,j}(|x|) \, Y_{n,j} \left(\frac{x}{|x|}\right) .
	\end{equation}
	An easy computation shows that $\psi_{n,j}$ are characterized by
	\begin{equation}\label{2-app}
	\begin{cases}
	-\left(r^{N-1}\psi'\right)' = r^{N-1} \left( r^{\a}\mu -\frac{\lambda_n}{r^2} \right) \psi \ & \text{ for } 0<r< 1 , \\
	\psi \in H^1_{0,\rad}(B)  &   \mbox{ and also } \\
	{\psi}/{|x|} \in L^2(B) & \mbox{ if } n\ge 1. 
	\end{cases}
	\end{equation}
	Next,  we perform the change of variable 
	\[ t=r^{\frac{2+\alpha}{2}} , \qquad \phi(t) =  \psi(r) .\] 
	The function  $\psi$ solves the equation in \eqref{2-app} if and only if 
	\begin{equation}\label{c1}
	t^2\phi^{\prime\prime}+ \frac{2N-2+\a}{2+\a} t  \phi^{\prime}+ \left(\frac{2}{2+\a}\right)^2\left(\mu  t^2-\lambda_n\right)\phi  =0  \quad  \text{ for }  0< t<1.
	\end{equation}
	If $N=2$  \eqref{c1} is a Bessel equation, otherwise is sufficient to perform a 
	further transformation, namely 
	\[ \hat\phi(t)=t^{\frac{N-2}{2+\a}} \phi(t),\] 
	to obtain the  Bessel equation
	\begin{equation}\label{c2}
	t^2{\hat \phi}^{\prime\prime}+t{\hat\phi}^{\prime}+\left(\left(\frac{2\sqrt{\mu}}{2+\a}\right)^2 t^2- \left(\frac{N-2+2n}{2+\a}\right)^2 \right) \hat\omega=0.
	\end{equation}
	Here we have also used the explicit value  $\l_n= n(N-2+n)$.
	The solutions of \eqref{c2} are   linear combinations of the Bessel functions of first and second kind, precisely
	\[\hat\phi(t)= C_1 \jcal_{\frac{N-2+2n}{2+\a}}\left(\frac{2\sqrt{\mu} \, t}{2+\a}\right)+ C_2{\mathcal Y}_{\frac{N-2+2n}{2+\a}}\left(\frac{2\sqrt{\mu} \, t}{2+\a}\right) . \]
	Coming back to $\psi(r)= r^{-\frac{N-2}{2}} \hat\phi(r^{\frac{2+\a}{2}})$ and imposing that $\psi\in H^1_{0,\rad}(B)$ one sees  that  the  coefficient $C_2$ must be zero, and the condition $\psi(1)=0$ yields  that  $\frac{2\sqrt{\mu}}{2+\a}$ is a zero of the Bessel function $\jcal_{\frac{N-2+2n}{2+\a}}$, that is \eqref{autov-peso}.
	Eventually 
	\[ \psi(r)=  C r^{-\frac{N-2}{2}}\jcal_{\frac{N-2+2n}{2+\a}}\left( z_{n,i}  \, r^{\frac{2+\alpha}{2}}\right) ,\]
	and the decomposition \eqref{autovpeso-decomposition} yields \eqref{autof-peso}. 
\end{proof}

\begin{remark}\label{rem:autov-rad}
	The same arguments of the proof of Lemma \ref{lem:autov-peso} show that the radial eigenvalues of  \eqref{prima-autof-weight}, i.e. the eigenvalues whose corresponding eigenfunctions belong to $H^1_{\rad}(B)$, are
	\begin{align} \label{autov-rad-peso}
	\mu_{0,i}&= \left(\frac{2+\alpha}{2} \, z_{i}\!\left(\frac{N-2}{2+\a}\right)\right)^2 .
	\intertext{Each of them has only one radial eigenfunction (up to a multiplicative constant) given by } \label{autof-rad-peso}
	\omega_{0,i}(r) & =   r^{-\frac{\n-2}{2}}\jcal_{\frac{\n-2}{2+\alpha}}\left(z_{i}\!\left(\frac{N-2}{2+\a}\right)r ^{\frac{2+\alpha}{2}}\right)  .
	\end{align}
\end{remark}


\remove{the radial e
	
	\begin{lemma}\label{autovfpeso}
		The radial eigenvalues  of \eqref{prima-autof-weight} are 
		\begin{align} \label{autovradpeso}
		\mu_n=& \left(\frac{2+\alpha}{2} \, z_n\right)^2 .
		\intertext{They are simple and their eigenfunctions are } \label{autofradpeso}
		\omega_n(x)= & C |x|^{-\frac{\n-2}{2}}\jcal_{\frac{\n-2}{2+\alpha}}(z_n 	|x|^{\frac{2+\alpha}{2}} ) = C \sum\limits_{k=0}^{+\infty} \dfrac{(-\frac{z_n^2}{4})^k }{k!\Gamma(k+\frac{\n+\a}{2+\alpha})} |x|^{(2+\a) k}
		\end{align}
		for $C\in \R$.
	\end{lemma}
	\begin{proof}
		We perform the change of variable 
		\[ t=|x|^{\frac{2+\alpha}{2}} , \qquad \widetilde \omega (t) =  \omega(x) .\] 
		By {\color{red}\cite[Proposition 5.6, Lemma 5.7]{AG-sing-1}} the function  $\omega\in H^1_{\rad}(B)$ solves \eqref{prima-autof-weight} if and only if $\tilde\omega\in H^1(0,1)$ satisfies
		\begin{equation}\label{H0M}
		\int_0^1 t^{\frac{2N-2+\a}{2+\a}} (|\widetilde\omega|^2 + |\widetilde\omega'|^2) dt <\infty, 
		\end{equation}
		and  solves
		\begin{equation}\label{c1}
		\begin{cases}	t^2\widetilde\omega^{\prime\prime}+ \frac{2N-2+\a}{2+\a} t  \tilde\omega^{\prime}+ \left(\frac{2}{2+\a}\right)^2\mu  t^2\tilde\omega =0  & 0< t<1 , \\
		\widetilde\omega(1)=0. & \end{cases}
		\end{equation}
		If $N=2$ the equation in \eqref{c1} is of Bessel type, otherwise is sufficient to perform a 
		further transformation, namely 
		\[ \hat\omega(t)=t^{N-2} \tilde{\omega}(t),\] 
		to obtain the  Bessel equation
		\begin{equation*}\label{c2}
		t^2{\hat\omega}^{\prime\prime}+t{\hat\omega}^{\prime}+\left(\left(\frac{2\sqrt{\mu}}{2+\a}\right)^2 t^2- \left(\frac{N-2}{2+\a}\right)^2 \right) \hat\omega=0,
		\end{equation*}
		whose solutions are   linear combinations of the Bessel functions of first and second kind
		\[\hat \omega(t) = C_1 \jcal_{\frac{N-2}{2+\a}}\left(\frac{2\sqrt{\mu}}{2+\a} t\right) + C_2 {\mathcal Y}_{\frac{N-2}{2+\a}}\left(\frac{2\sqrt{\mu}}{2+\a}t\right).\]
		Next,  imposing that $\widetilde\omega(t)=  t^{2-N} \hat\omega(t)$ satisfies \eqref{H0M}  one sees  that  the  coefficient $C_2$ must be zero, and the condition $\widetilde \omega (1)=1$ yields  that  $\frac{2\sqrt{\mu}}{2+\a}$ is a zero of the Bessel function $\jcal_{\frac{N-2}{2+\a}}$, that is \eqref{autovradpeso}.
		Eventually 
		\[ \omega(x) = \widetilde \omega \left(|x|^{\frac{2+\alpha}{2}}\right) = |x|^{-\frac{N-2}{2}} \widehat\omega \left(|x|^{\frac{2+\alpha}{2}}\right) = C |x|^{-\frac{N-2}{2}}\jcal_{\frac{N-2}{2+\a}}\left( z_n  |x|^{\frac{2+\alpha}{2}}\right) ,\]
		which is the first equality in \eqref{autofradpeso}. The second equality follows inserting the explicit formula for $\jcal_{\frac{N-2}{2+\a}}$.
\end{proof}}

In the remaining of this section we deal with radial solutions to \eqref{H},  for which a more detailed description can be given.
We write $u_p$ for the radial solution to with $m$ nodal zones. It is unique up to the sign (see \cite{NN}) and to fix idea we shall take that $u_p(0)>0$. 
We denote by $0<r_{1,p}< \dots r_{m,p}=1$ the nodal radii of $u_p$, so that $u_p(r_{i,p})=0$ as $i=1,\dots m$.
It is not hard to see by ODE techniques that $u_p$ has only one critical point in any nodal interval $A_1=[0,r_{1,p})$ or $A_i=(r_{i-1,p}, r_{i,p})$ if $i=2,\dots m$, which shall be denoted by $s_{i-1,p}$ henceforth. Moreover $s_{0,p}=0$ is the global maximum point and 
$u_p(0)>-u_p(s_{1,p})>u_p(s_{2,p})>\dots (-1)^{m-1} u_p(s_{m-1,p})$. We refer to {\color{red}\cite[Lemma 5.4]{AG-sing-1}} for a detailed proof.
Here we see that when $p$ approaches $1$, $\|u_p\|^{p-1}$ stays bounded , none of the  nodal zones disappears 
and a suitable rescaling of $u_{p}$ converges to the $m^{th}$ radial eigenfuntion of \eqref{prima-autof-weight}, which gives Theorem \ref{p1}.  

\begin{proof}[Proof of Theorem \ref{p1}]
	Let us check first that $\|u_p\|_{\infty}^{p-1}=|u_p(0)|^{p-1}$ is bounded for $p$ close to $1$.
	If not there exists a sequence $p_n\to 1$ such that
	\[ \tau_n= \|u_{p_n}\|_{\infty}^{\frac{p_n-1}{2+\alpha}}  \to \infty \quad \text{ as } \, n\to +\infty,  \]
	so we look at the rescaled function
	\[U_n(x)= \dfrac{1}{\|u_{p_n}\|_{\infty}} u_{p_n}\left(\dfrac{x}{\tau_n}\right) ,  \]
	that satisfies 
	\begin{equation}\label{risc}
	\left\{\begin{array}{ll}
	-\Delta U_n = |x|^{\alpha} |U_{n}|^{p_n-1} U_n , & \mbox{ in } B_{\tau_n} , \\
	|U_n|\le U_n(0)= 1 ,  & \\
	U_n=0, & \mbox{ on } \partial B_{\tau_n}.
	\end{array}\right.\end{equation}
	Here $B_{\tau_n}$ denotes the ball of radius $\tau_n$ centered at the origin.
	Because $|x|^{\alpha}|U_n|$ is locally bounded, the function $U_n$ converges locally uniformly in $\R^{\n}$ to a radial function $U$ which solves 
	\[
	\left\{\begin{array}{ll}
	-\Delta U = |x|^{\alpha}  U , & \mbox{ in } \R^{\n} , \\
	|U|\le U(0)= 1 .  & 
	\end{array}\right.\]
	Remark that the number of nodal zones of $U$ can not overpass the one of $U_n$, that is $m$. Indeed inside each nodal zone $U_n$ has fixed sign and converges uniformly to $U$. Therefore $U$ cannot change sign and Hopf Lemma yields that no further zero can appear.
	\\
	On the other hand $w(r)=r^{\frac{N-2}{2+\a}}U\left(\left(\frac{2r}{2+\a}\right)^{\frac{2}{2+\a}}\right)$ solves
	a Bessel equation
	\[ r^2w'' + {r} w' + \left(r^2+\left(\frac{N-2}{2+\a}\right)^2\right) w = 0,\]
	and since it is bounded near at the origin $w(r)= A  {\mathcal J}_{\frac{N-2}{2+\a}}(r)$, where ${\mathcal J}$ stands for the Bessel function of first kind.
	This is not possible because $w$ has  an infinite number of nodal zones, proving that $\| u_p\|_{\infty}^{p-1}\le C$.
	\\
	Hence Lemma \ref{lem:conv-eigenv} ensures that when $p_n\to 1$ then	
	the function $\bar{u}_n(x)=\frac {u_n(x)}{\| u_n\|_{\infty}}$ converges to an eigenfunction $\omega$ of \eqref{prima-autof-weight} related to the eigenvalue $\mu=\lim \| u_n\|_{\infty}^{p_n-1}$. Of course $\omega$ has to be radial, it remains to show that it has exactly $m$ nodal zones.
	Actually the first nodal zone, say it $B_n=\{ x \, : \, |x|<r_n\}$, can not collapse to a null set because multiplying the equation in \eqref{H} by $u_n$ and integrating on $B_n$ one sees that
	\begin{align*}
	{\int_{B_n} |\nabla  u_n|^2 dx} & = {\int_{B_n}|x|^{\alpha}  | u_n|^{p+1} dx}  \le {r_n^{\alpha} \|u_n\|_{\infty}^{p_n-1} \int_{B_n}  |u_n|^2 dx} \\
	& \le r_n^{2+\alpha} \|u_n\|_{\infty}^{p_n-1} {\int_{B_n} |\nabla  u_n|^2 dx}
	\end{align*}
	by Poincar\'e inequality. Remark that it also follows that $\|u_n\|_{\infty}^{p_n-1}$ does not vanish and in particular $\mu>0$.
	\\
	The last nodal zone  can not disappear either. To see this fact we denote by $s_n$ the last zero of $u_n$ and $\tau_n=\|u_n\|_{\infty}^{\frac{p_n-1}{2+\alpha}}$ as before. By what it has just been said concerning the first nodal zone $\tau_n s_n\ge \delta >0$ and it suffices to exclude the occurrence $R_n:=1+\tau_n(1-s_n)\to 1$. 
	To this aim we look at the rescaled sequence
	\[ \zeta_n(r) = \dfrac{1}{\|u_n\|_{\infty}} \left| u_n\left(s_n+ \dfrac{r-1}{\tau_n}\right)\right|,  \quad \text{ as } 1\le r \le R_n \]
	Now $0<\zeta_n\le 1$ on $(1, R_n)$ and it  solves
	\[\begin{cases}
	-\zeta_n'' - \dfrac{\n-1}{\tau_ns_n+r-1} \zeta_n'= (\tau_ns_n+r-1)^{\alpha} \zeta_n^p  & 1<r<R_n, \\
	\zeta(1)=\zeta(R_n) = 0. &
	\end{cases}\]
	Multiplying the equation by $\zeta_n$ and integrating by parts gives
	\begin{align*}
	\int_1^{R_n}|\zeta'_n|^2 dr =  \int_1^{R_n}\dfrac{\n-1}{\tau_ns_n+r-1}\zeta_n\zeta_n' dr + \int_1^{R_n} (\tau_ns_n+r-1)^{\alpha} \zeta_n^{p+1} dr \\
	\le  \frac{N-1}{\delta} \int_1^{R_n} \zeta_n\zeta'_n dr + \tau_n^{\a} \int_1^{R_n} \zeta^{p+1} dr \\
	\underset{\stackrel{\text{Holder inequality}}{\text{and } \zeta^{ p-1}\le 1}}{\le} \dfrac{\n-1}{\delta} \left(\int_1^{R_n} \zeta_n^2 dr\right)^{\frac{1}{2}} \left(\int_1^{R_n} |\zeta'_n|^2 dr\right)^{\frac{1}{2}} + \tau_n^{\a} \int_1^{R_n} \zeta_n^2 dr \\
	\underset{\stackrel{\text{Wirtinger}}{\text{ inequality}}}{\le} \dfrac{(\n-1)(R_n-1)}{\delta\pi} \int_1^{R_n} |\zeta'_n|^2 dr + \dfrac{\tau_n^{\a}(R_n-1)^2}{\pi^2} \int_1^{R_n} |\zeta'_n|^2 dr 
	\end{align*}
	which forbids $R_n\to 0$.
	Thus $\omega$ has at least $m$ nodal zones. But no more nodal zones can appear because inside each nodal zone $\omega$ is the uniform limit of  $\bar u_n$ which has fixed sign.	
	\\
	Eventually $\mu$ is the $m^{th}$ radial eigenvalue for \eqref{prima-autof-weight} and Remark \ref{rem:autov-rad} completes the proof of \eqref{a0}, \eqref{a0bis}, \eqref{a2}.
	The constant in \eqref{a0bis} comes from the condition $\omega (0)=1$.
\end{proof}


In this subsection we prove Theorem \ref{mi-p=1}.
If $u_p$ is any solution to \eqref{H}, its  Morse index,  that we denote by $m(u_p)$, is connected with the linearized operator 
\begin{equation}\label{Lp}	L_{u_p} \psi:=-\Delta \psi-p |x|^\a|u_p|^{p-1}\psi , \end{equation}
and the quadratic form
\begin{equation}\label{Qp}	{\mathcal Q}_{u_p} \psi = \int _B |\nabla \psi|^2-p\int_B|x|^\a|u_p|^{p-1}\psi^2, \end{equation}
and can be defined as the number, counted with multiplicity, of the negative eigenvalues of  
\begin{equation}\label{eigenvalue-problem}
\left\{\begin{array}{ll}
L_{u_p} \psi=\L_k(p)\, \psi & \text{ in } B\\
\psi\in H^1_0(B), & 
\end{array} \right.
\end{equation}
or equivalently as the maximal dimension of a subspace of $H^1_0(B)$ where ${\mathcal Q}_{u_p} $ is negative defined.

In {\color{red}\cite[Section ???]{AG-sing-1}} an alternative definition of Morse index has been given by using a singular eigenvalue problem
\begin{equation}\label{sing-eigenvalue-problem}
\left\{\begin{array}{ll}
L_{u_p} \widehat \psi=\dfrac{\widehat\L_k(p)}{|x|^2} \widehat \psi & \text{ in } B\\
\widehat \psi\in \mathcal H_0 ,& 
\end{array} \right.
\end{equation}
where ${\mathcal H}_{0}$ denotes the subspace of $H^1_{0}(B)$ 
\[ {\mathcal H}_{0}=\big\{ \psi  \in H^1_{0}(B) \, : \, |x|^{-1}\psi \in L^2(B)\big\} .\]
Precisely $m(u_p)$ is the number, counted with multiplicity, of the negative eigenvalues of  \eqref{sing-eigenvalue-problem},  or equivalently as the maximal dimension of a subspace of $\mathcal H_0$ where ${\mathcal Q}_{u_p} $ is negative defined.
\\
If $u_p$ is  radial and has exactly $m$ nodal zones, an even more effective description of its  Morse index  can be done by taking advantage from  the  transformation introduced in \cite{GGN}
\begin{equation}\label{transformation-henon}
t=r^{\frac{2+\a}{2}} ,\qquad w(t)=u(r) ,
\end{equation} 
which maps the space $H^1_{0,\rad}(B)$ into
\begin{equation}\label{H0M-def}
H^1_{0,M}:= \left\{ v\in H^1(0,1)\, : \, \int_0^1 t^{M-1}  |v'|^2 dt < \infty , \ v(1)=0 \right\}
\end{equation}
for 
\begin{align}\label{Malpha}
M & = M(N,\alpha):= \frac{2(N+\alpha)}{2+\alpha}  .
\end{align} 
As showed in {\color{red}\cite[Proposition 5.6]{AG-sing-1}}, in this case $u_p$ 
is transformed by \eqref{transformation-henon} into the unique (up to the sign) solution of the "radially extended" Lane-Emden problem
\begin{equation}\label{LE-radial}
\begin{cases}
- \left(t^{M-1} w^{\prime}\right)^{\prime}= \left(\frac{2}{2+\a}\right)^2 t^{M-1} |w|^{p-1}w  , \qquad  & 0<t< 1, \\
w'(0)=0, \quad w(1)=0
\end{cases}\end{equation}
which has $m$ nodal zones  and shall be denoted by $w_p$ hereafter. 
\\
In the same paper the computation of the Morse index of $u_p$ has been related to a singular  Sturm-Liouville problem connected with $w_p$, namely 
\begin{equation}\label{radial-singular-problem-LE}
\begin{cases}-\left(t^{M-1}\phi_i'\right)'=t^{M-1}\left( W_p (t)+\frac{ \nu_i(p)}{t^2}\right) \phi_i  &  \text{ for } t\in(0,1) ,
\\
\phi_i \in {\mathcal H}_{0,M} & 
\end{cases}
\end{equation}
where
\begin{equation} \label{pot}
W_p(t)= p \left(\frac{2}{2+\a}\right)^2 |w_p(t)|^{p-1}
\end{equation}
and ${\mathcal H}_{0,M}$ denotes the subspace of $H_{0,M}$ made up of functions which also satisfy
\begin{equation}\label{Hstorto0M}
\int_0^1 t^{M-3} \phi^2 dt < \infty.
\end{equation}
It is useful to remark that the eigenvalues $\nu_i(p)$ also have a variational characterization
\begin{equation}\label{radial-singular-M}\begin{split}
{\nu}_1(p )= & \inf_{\substack{\phi\in\mathcal{H}_{0,M}\ w\neq 0}}\frac{\int_0^1 t^{M-1}\left(|\phi'|^2- W_p\phi^2\right) dt}{\int_0^1 t^{M-3}\phi^2 dt},  
\\
{\nu}_{i}(p)= & \inf_{\substack{\phi\in\mathcal{H}_{0,M}\ \phi\neq 0\\ w\underline \perp_{M}\{\phi_1,\dots,\phi_{i-1}\}}}\frac{\int_0^1 t^{M-1}\left(|\phi'|^2- W_p\phi^2\right) dt}{\int_0^1 t^{M-3}\phi^2 dt} , 
\end{split}\end{equation}
as shown in {\color{red}\cite[Proposition 3.11]{AG-sing-1}}. Here the perpendicularity condition denoted by $\underline{\perp}_M$ means 
\begin{equation}\label{perpM-def}
\phi \underline{\perp}_M \psi \ \iff \ \int_0^1 t^{M-3} \phi \psi dt =0 . 
\end{equation}
Moreover by the analysis performed in {\color{red}\cite[Subsection 3.1]{AG-sing-1}} the only negative eigenvalues of \eqref{radial-singular-problem-LE} are $\nu_1(p)<\nu_2(p)<\dots< \nu_m(p)<0$ and they satisfy
\begin{align}
\label{nl<k-general-H} & {\nu}_i(p)  < -(M-1)   &\text{ as } i=1,\dots m-1 ,
\\
\label{num>k-general-H} & -(M-1) <{\nu}_m(p) <0  ,  &
\end{align}
for any value of the parameter $p$.

Putting together Proposition 1.5 and Theorem 1.7 in {\color{red}\cite{AG-sing-1}} gives
\begin{proposition}\label{prop:morse-formula}
	The Morse index of $u_p$ is given by
	\begin{equation}\label{morse-formula}
	m(u_p) = \sum\limits_{i=1}^{m}\sum\limits_{j=0}^{\lceil J_i(p)\rceil -1} N_j, 
	\end{equation}
	\begin{tabular}{ll} 
		where & $J_i(p)=\frac{2+\a}{2} \left(\sqrt{\left(\frac{N-2}{2+\a}\right)^2- \nu_i(p)}-\frac{N-2}{2+\a}\right)$,\\ 
		& $\lceil s \rceil = \{\min n\in \mathbb Z \, : \, n\ge s\}$ denotes the ceiling function and \\
		& $	N_j=\begin{cases}
		1 & \text{ when }j=0\\
		\frac{(N+2j-2)(N+j-3)!}{(N-2)!j!} & \text{ when }j\geq 1
		\end{cases}$ is the multiplicity of the eigenvalue  \\
		&
		$\l_j=j(N+j-2)$ for the Laplace-Beltrami operator in the sphere ${\mathbb S}_N$.
	\end{tabular}
	
	Furthermore the negative singular eigenvalues  of \eqref{sing-eigenvalue-problem} can be decomposed as
	\[\widehat\L_k(p) = \left(\frac{2+\a}{2}\right)^2 \nu_i(p) + \l_j \]
	as far as $ \left(\frac{2+\a}{2}\right)^2 \nu_i(p)<-  \l_j$ for some $i=1, \dots m$ and $j\ge 0$, while the related eigenfunctions are 
	\begin{equation}\label{decomposition} \widehat \psi_k(x) = \phi_i\left(|x|^{\frac{2+\a}{2}}\right) Y_j\left(\frac{x}{|x|}\right),
	\end{equation}
	where $\phi_i$ is an eigenfunction of \eqref{radial-singular-problem-LE} related to $\nu_i(p)$, and $Y_j$ is an eigenfunction for the Laplace-Beltrami operator in ${\mathbb S}_{N-1}$ related to $\l_j$.
\end{proposition}
In this way, the asymptotic Morse index of $u_p$ can be computed by investigating the eigenvalues $\nu_i(p)$ of \eqref{radial-singular-problem-LE}, which shall be the topic of the remaining of this subsection.

\

As a preliminary it is worth noticing that the convergence stated by Theorem \ref{p1} translates into the following one for $w_p$ and $W_p$
\begin{corollary}\label{p1v}
	As $p\to 1$ we have  
	\begin{align}
	\label{a0v}	\| W_p\|_{\infty} & \to z_m^2  	, \\
	\label{a0bisv}
	\frac{w_p(t)} {\| w_p\|_{\infty}} & \to \Gamma\left(\frac{N+\a}{2+\a}\right)  t^{-\frac{N-2}{2+\a}} \jcal_{\frac{N-2}{2+\a}} (z_m t)  \\ \nonumber
	& \quad = \Gamma\left(\frac{N+\a}{2+\a}\right) \sum\limits_{k=0}^{+\infty} \dfrac{(-\frac{z_m^2}{4})^k  }{k!\Gamma(k+\frac{\n+\a}{2+\alpha})} t^{2 k}	\qquad  \text{ in } C^2[0,1)  , 
	\intertext{ and denoting by $0<t_1<\dots t_m=1$ the zeros of $w_p$}
	\label{a2v}
	t_i & \to \frac{z_i}{z_m}  \qquad \text{ as } i=1,\dots m-1.
	\intertext{Besides}
	\label{a1v} W_p & \rightharpoonup z_m^2 \quad \text{ weakly in $L^2(0,1)$.} \end{align}
\end{corollary}
\begin{proof}
	\eqref{a0v},  \eqref{a0bisv} and \eqref{a2v} follow immediately by \eqref{a0}, \eqref{a0bis} and \eqref{a2} via  the definition of $W_p$ and $w_p$.
	Further $W_p(t)= \| W_p\|_{\infty}  \left(\frac{w_p(t)} {\| w_p\|_{\infty}} \right)^{p-1}$ is bounded and converges uniformly to the constant $z_m^2$ on any closed interval contained in $[0,1)\setminus\{z_1/z_m,\dots z_{m-1}/z_m\}$, so that the weak convergence follows trivially.
\end{proof}

To go further some more notations are needed.
For  every $\beta \ge 0$ and $i=1,\dots m$ we denote by $z_i(\beta)$  the $i^{th}$ positive zero of the Bessel function $\jcal_{\beta}$. 
When  the parameter $\beta$ is omitted we mean $z_i= z_i\left(\frac{N-2}{2+\a}\right)=z_{i,0}$ according to the notations introduced before.
Since the map $\beta\mapsto z_i(\beta)$ is continuous and increasing (see for instance \cite{Elb}), there exist $\beta_i=\beta_i(\alpha, N)>0$  such that $z_i(\beta_i)$ (the $i^{th}$ zero of the Bessel function ${\mathcal J}_{\beta_i}$) coincides with $z_m$ (the $m^{th}$ zero of ${\mathcal J}_{\frac{\n-2}{2+\alpha}}$), moreover  
\[ \beta_1>\dots\beta_m=\frac{\n-2}{2+\alpha} .\]

The limit of the singular eigenvalues $\nu_i(p)$ can be expressed in terms of the parameters $\beta_i$ as follows.

\begin{proposition}\label{formulainutile}
	As $p\to 1$ we have 
	\begin{align}\label{nup=1}
	\nu^m_i(p)& \to \left(\frac{\n-2}{2+\alpha}\right)^2 -(\beta^m_i)^2 \qquad \text{ as } i=1,\dots m.
	\end{align}
\end{proposition}
In particular $\nu^m_m(p)\to 0$.

Putting together Propositions \ref{prop:morse-formula} and \ref{formulainutile} yields Theorem \ref{mi-p=1}.

\begin{proof}[Proof of Theorem \ref{mi-p=1}]
	From the limit  \ref{nup=1} one sees that the index $J^m_i(p)$ appearing in the Morse index formula \eqref{morse-formula} satisfies
	\begin{equation}	\label{Jp=1}   
	J^m_i(p) \to  \frac{(2+\a)\beta^m_i -(N-2)}{2}  
	\end{equation}
	as $p\to 1$.
	So when  $\frac{(2+\a)\beta^m_i -(N-2)}{2} $ are not  integer \eqref{morsep=1} follows, while when $\frac{(2+\a)\beta^m_i -(N-2)}{2} $ is integer for some $i$ we only get \ref{morsep=1estbrutta}.
\end{proof}

Some preliminary lemmas are useful to prove Proposition \ref{formulainutile}.
First we remark that all the eigenvalues of \eqref{radial-singular-problem-LE} are bounded from below in a neighborhood of $p=1$. 

\begin{lemma}\label{lem:nu-bounded}
	There exists $C>0$ such that $\nu_1(p)\ge -C$ for $p$ close to $1$. 
\end{lemma}
\begin{proof}
	By \eqref{a0v} $0\le W_p(t)\le z_m^2+\e$ for $p$ sufficiently close to $1$. So for all $\psi\in{\mathcal H}_{0,M}$
	\begin{align*} 
	\int_0^1 t^{M-1} \left(|\psi' |^2- W_p\psi^2\right)dt \geq -(z_m^2+\e)\int_0^1 t^{M-1}\psi^2 dt  \geq - (z_m^2+\e)   \int_0^1 t^{M-2} \psi^2 dt ,
	\end{align*}
	and the claim follows by the variational characterization \eqref{radial-singular-M}.  
\end{proof}

Next,  we establish  an ad-hoc Poincar\'e inequality. For $0\le a<b$ we denote by $H_{0,M}(a,b)$ the space of functions of $H^1(a,b)$ such that $\psi(b)=0$, endowed with the  norm
\[\|\psi; H_{0,M}(a,b) \| = \int_a^b t^{M-1} |\psi' |^2 dt .\]
It is clear that $H_{0,M}(0,1)$ is the space $H_{0,M}$ already introduced.
It is very easy to see that
\begin{lemma}\label{poincare}
	For every $\psi\in H_{0,M}(a,b)$ we have
	\[\int_a^b t^{M-1} \psi^2 dt  \le \frac{b(b-a)}{M-1} \int_a^b t^{M-1} |\psi'|^2 dt . \]
\end{lemma}
\begin{proof}
	Since $\psi$ has first derivative in $L^2$, it is continuous and differentiable a.e., and from $\psi(b)=0$ we get
	\[ \psi(t) = \int_t^b \psi'(r) dr .\]
	Hence
	\begin{align*}
	\int_a^b t^{M-1} \psi^2 dt & = 	\int_a^b t^{M-1} \left(\int_t^b \psi'(s) ds\right)^2 dt \\
	& \underset{\text{Holder}}{\le} \int_a^b t^{M-1}  (b-t) \int_t^b |\psi'(s)|^2 ds \,dt \le b(b-a)\int_a^b t^{M-2} \int_t^b |\psi'(s)|^2 ds \,dt \\
	&= b(b-a)\int_a^b  |\psi'(s)|^2  \int_a^s t^{M-2} dt \,ds \le \frac{b(b-a)}{M-1} \int_0^1 s^{M-1}  |\psi'(s)|^2  ds.
	\end{align*}
\end{proof}

\begin{proof}[Proof of \eqref{formulainutile}]
	By \eqref{nl<k-general-H}, \eqref{num>k-general-H} and Lemma \ref{lem:nu-bounded}  for any sequence $p_n\to 1$ there is an extracted sequence (that we still denote by $p_n$) such that $\nu_i(p_n)$ converges to some  $\bar{\nu}_i$. Moreover $\bar\nu_i \le - (M-1)$ if $i=1,\dots m-1$ and  $- (M-1)\le \bar \nu_m\le 0$. 
	\\
	Let $\psi_{i,n}\in {\mathcal H}_{0,M}$ the eigenfunction related to $\nu_i(p_n)$ normalized so that $\|\psi_{i,n}\|_{\infty}=1$. We recall that by {\color{red}\cite[Proposition 3.9 and Property 5 in Subsection 3.1]{AG-sing-1}} $\psi_{i,n} \in C[0,1]\cap C^1(0,1]$ has exactly $i$ nodal zones and for $t$ Next,  to $0$
	\begin{equation}\label{psi-in-0}
	|\psi_{i,n}(t)| \le C t^{\theta_{i,n}} ,  \quad |\psi_{i,n}'(t)| \le C t^{\theta_{i,n}-1} , 
	\end{equation}
	with $\theta_{i,n}= \sqrt{\left(\frac{M-2}{2}\right)^2 -\nu_i(p_n)} - \frac{M-2}{2}$. It is worth remarking that the constants $C$ appearing here only depend by $\|\psi_{i,n}\|_{\infty}=1$, and therefore are general in the present situation.
	Further for $i=1,\dots m-1$ the  estimates in \eqref{psi-in-0} assures that $\psi_{i,n}$ are equicontinuous on a set of type $[0,\e]$ because $\theta_{i,n} \ge 1$.
	This is not the case for $i=m$.
	\\	
	So the proof of \eqref{nup=1} in the case $i=m$ will differ from the one for $i\le m-1$.
	
	\

	We go on and show that \eqref{nup=1} holds as $i=1,\dots m-1$, and in doing so we also see that
	\begin{equation}\label{limit-autofunz}
	\psi_{i,n} (t) \to A_i   t^{-\frac{N-2}{2+\a}} \jcal_{\beta_i} (z_m t) \quad \text{ uniformly}
	\end{equation}
	for some constant $A_i\neq 0$.

	Using $\psi_{i,n}$ as a test function in \eqref{radial-singular-problem-LE} gives
	\begin{align} \nonumber
	\int_0^1t^{M-1} |\psi_{i,p_n}'|^2 dt & =
	\int_0^1t^{M-1} \left( W_p + \frac{\nu_{i}(p_n)}{t^2}\right)\psi_{i,p_n}^2 dt  \\ \label{psi'-unif} 
	&	\underset{\eqref{nl<k-general-H}}{<}
	\int_0^1t^{M-1} W_p \psi_{i,p_n}^2 dt 
	\le C
	\end{align}
	thanks to the normalization of $\psi_{i,n}$ and \eqref{a0}.
	Hence by the compact embedding of $H^1_{0,M}$ (see \cite[Lemma 6.4]{AG-sing-1})
	$\psi_{i,n}$ converges to a function $\psi_i$ weakly in $H^1_{0,M}$, strongly in any $L^q_M$ for any $q>1$ (if $M=2=N$) or for any $1<q<\frac{2M}{M-2}=\frac{2(N+\a)}{N-2}$ (if $M>2$, i.e. $N\ge 3$), and pointwise a.e. 
	Moreover $\psi_{i,n}\to \psi_i$ also uniformly by Ascoli Theorem. Indeed we have already noticed that $\psi_{i,n}$ are equicontinuous on $[0,\e]$, while while for $t_1, t_2\in[\e,1]$ we have 
	\begin{align*}
	|\psi_{i,n}(t_1) - \psi_{i,n}(t_2)| \le \int_{t_1}^{t_2} |\psi'_{i,n}(s)| ds \underset{\stackrel{\text{Holder}}{\text{and } \eqref{psi'-unif}}}{\le} 
	C \left(\int_{t_1}^{t_2}s^{1-M}\right)^{\frac{1}{2}}
	\le C \e^{1-M} \sqrt{|t_1-t_2|} .\end{align*}
	Therefore $\psi_i$ is not trivial (actually $\|\psi_i\|_{\infty}=1$ by the normalization) and has at most $i$ nodal zones.
	Let us check that it has exactly $i$ nodal zones, i.e. that none of the nodal zones of $\psi_{i,n}$ disappear.
	Let $a_n$, $b_n$ be two consecutive zeros of $\psi_{i,n}$, now the function  $\psi_{i,n}$ restricted to the $(a_n,b_n)$ belongs to the space $H_{0,M}(a_n,b_n)$ introduced before Lemma \ref{poincare}, and clearly extending it to zero outside $(a_n,b_n)$ gives a function of $H_{0,M}$. Using this extension as a test function in  \eqref{radial-singular-problem-LE}  one sees that
	\begin{align*}
	\int_{a_n}^{b_n} t^{M-1} (\psi'_{i,n})^2 dt & = \int_{a_n}^{b_n} t^{M-1}\left( W_p + \frac{\nu_i(p_n)}{t^2}\right) \psi_{i,n}^2 dt \underset{\eqref{a0v}}{\le} 
	C  \int_{a_n}^{b_n} t^{M-1}\psi_{i,n}^2 dt \\ &\le \frac{C b_n(b_n-a_n )}{M-1}  \int_{a_n}^{b_n} t^{M-1} (\psi'_{i,n})^2 dt 
	\end{align*}
	by the Poincar\'e inequality established in Lemma \ref{poincare}. 
	If follows at once that neither $b_n$ or $b_n-a_n$ vanishes.

	Next,  thanks to \eqref{a1v} one can pass to the limit into equation \eqref{radial-singular-problem-LE} and see that $\psi_i$ is a weak solution to
	\begin{equation}\label{limit-eigenvalue-problem}
	-\left(t^{M-1}\psi_i'\right)' = t^{M-1}\left( z_m^2 +\frac{\bar\nu_i}{t^2} \right)\psi_i  \quad  \text{ as } 0<t < 1.
	\end{equation}
	
	From this the function $\phi_i(t) = t^{\frac{N-2}{2+\a}} \psi_i (t/z_m)$ solves the Bessel equation
	\begin{equation}\label{limit-bessel}
	t^2 \phi_i'' + t \phi_i' + (t^2 - \b_i^2) \phi_i= 0    \qquad \text{ for } \beta_i^2=\left(\frac{\n-2}{2+\alpha}\right))^2-\bar \nu_i.
	\end{equation}
	Since $\phi_i(0)= 0$ we have that $ \phi(t)= C \jcal_{\beta_i}(t)$,
	and as $\phi_i(z_m)= z_m^{\frac{N-2}{2+\a}} \psi_i(1)=0$ it follows that $z_m$ has to be a zero of the Bessel function $\beta_i$.
	Moreover we have seen that $\psi_i$ (and then also $\phi_i$) has $i$ nodal zones on $(0,z_m)$, so that $\beta_i$ is determined by the condition that $z_m$ coincides with the $i^{th}$ positive zero of $ \jcal_{\beta_i}$, which is \eqref{nup=1}.
	
	\remove{\taglia	 to a radial eigenfunction for \eqref{prima-autof-weight} related to the eigenvalue $\mu_m$. 
		Since radial nodal zones can not increase, this means that $i=m$. 
		\\
		Therefore for $i=1,\dots m-1$ we have $\delta=-\sup\nu_i(p_n)>0$ and
		\begin{align*}
		\int_0^T t^k\left(|\psi_{i,p_n}'|^2+\delta t^{-2}\psi_{i,p_n}^2\right) dt
		\le \int_0^T t^k\left(|\psi_{i,p_n}'|^2-\nu_i t^{-2}\psi_{i,p_n}^2\right) dt \\
		= \int_0^T t^kp|v_p|^{p-1}\psi_{i,p_n}^2 dt \le c\end{align*}
		by \eqref{a0}.
		Hence $\psi_{i,n}$ {\bf converges} to a function $\bar\psi_i\in E$ with $\int_0^T t^{k-2}\bar\psi_i^2 dt <\infty$
		which {\bf{solves}}
		\[
		- \left(t^{k}\bar{\psi}_i'\right)'= t^k\left(\mu_m +\frac{\bar{\nu}_i}{t^2} \right)\bar\psi_i .
		\]
		Now $\xi_i(s)= s^{\frac{\n-2}{2+\alpha}} \bar \psi_i(s/\sqrt{\mu_m})$ solves a Bessel equation
		\[t^2\xi_i''+ t\xi + \left(t^2-\beta_i^2\right) \xi =0 ,\]
		for $\beta_i^2=(\frac{\n-2}{2+\alpha})^2-\bar \nu_i$, and therefore
		\[ \bar\psi_i (t)= A t^{-\frac{\n-2}{2+\alpha}} \jcal_{\beta_i}(\sqrt{\mu_m} \,t). \]
		{\bf In addition $\bar\psi_i$ has $i$ nodal zones and $\bar\psi_i(T)=0$}.
		So (remembering \eqref{autovradpeso})  $\jcal_{\beta_i}(z_m)=0$  and \eqref{nup=1} follows.}
	
	\
	
	It remains to check \eqref{nup=1} for $i=m$, i.e. $\bar\nu_m=0$. To do this  we compare the eigenfunction $\psi_{m,n}$  with $\bar w_n(t)= \frac{1}{\|{w}_{p_n}\|_{\infty}} {w}_{p_n}(t)$, that satisfies
	\[ \begin{cases} -\left(t^{M-1} \bar w_n'\right)'= \frac{1}{p_n} t^{M-1} W_{p_n} \bar w_n  & \text{ as } 0 < t < 1 , \\
	\bar w_n(0)=1 , \  \bar w_n(1)=0 . & \end{cases} \]
	Then  it is easy to establish the Picone type identity	
	\begin{equation}\label{picone-m}  
	\left(t^{M-1}(\psi_{m,n}'\bar w_n - \psi_{m,n}\bar w_n')\right)' =  t^{M-1}  \left(\left(\frac{1}{p_n}-1\right) W_{p_n}(t) - \frac{\nu_m(p_n)}{t^2}\right) \psi_{m,n} \bar w_n 
	\end{equation}
	as $0<t<1$. For a rigorous computation without requiring that $\psi_{m,n}$ is a classical solution we refer to {\color{red}\cite[Proposition 3.9]{AG-sing-1}}.
	Thanks to \eqref{a0v} $\left(\frac{1}{p_n} -1\right) W_{p_n} \to 0$ uniformly, and assuming by contradiction that $\nu_m(p_n)\to \bar \nu < 0$ it follows that
	\begin{equation}\label{hp-SP}
	\left(\frac{1}{p_n} -1\right)W_{p_n} -\frac{\nu_m(p_n)}{t^2}  >  0 \quad \text{ as } 0<t<1 
	\end{equation}
	for large $n$.
	Since both $\psi_{m,n}$ and $\bar w_n$ are null in $t=1$ and have exactly $m-1$ zeros on $(0,1)$, 
	the Sturm-Picone's comparison Theorem yields that or $\psi_{m,n}$ is proportional to $ \bar w_n$, or all the zeros of $ \bar w_n$ follows the first zero of $\psi_{m,n}$, say it $\rho$.
	The first event is not possible because $\psi_{m,n}(0)=0$ and $ \bar w_n(0)=1$.
	In the second case we may assume w.l.g.~that $ \bar w_n, \psi_{m,n}>0$ on $(0,\rho)$, so that $\psi_{m,n}'(\rho)<0$ and $ \bar w_n(\rho)>0$. Next,  integrating \eqref{picone-m} between $t$ and $\rho$ and then letting $t\to 0$ gives
	\begin{align*}	
	\rho^{M-1}\psi_{m,n}'(\rho)\bar w_n(\rho) - \lim\limits_{t\to 0}t^{M-1}(\psi_{m,n}'\bar w_n - \psi_{m,n}\bar w_n')
	\\ =
	\int_0^{\rho} t^{M-1}  \left(\left(\frac{1}{p_n}-1\right) W_{p_n} - \frac{\nu_m(p_n)}{t^2}\right) \psi_{m,n} \bar w_n  dt \underset{\eqref{hp-SP}}{>} 0.
	\end{align*}
	But \eqref{psi-in-0} guarantees that $\psi_{m,n}(t) , t^{M-1} \psi_{m,n}'(t) \to 0$ as $t\to 0$. Indeed also when $M=2$ we have $t^{M-1} |\psi_{m,n}'(t)| \le C t^{\sqrt{-\nu_m(p_n)}}$ with $\nu_m(p_n)<0$.
	So we have reached the contradiction $\psi_{m,n}'(\rho)\bar w_n(\rho) > 0$, which concludes the proof.
\end{proof}